\documentclass[12pt]{article}

\usepackage{pgfplots}
\usepackage{times}
\usepackage{amsmath,amsfonts, amstext,amssymb,amsbsy,amsopn,amsthm}
\usepackage{dsfont}
\usepackage{esint}
\usepackage{graphicx}   
\usepackage{hyperref}
\usepackage[all]{xy}
\usepackage{color}
\usepackage{tikz}
\usepackage{lineno,hyperref}
\usetikzlibrary{intersections}


\newtheorem{mthm}{Theorem}
\newtheorem{theorem}{Theorem}[section]
\newtheorem{mrem}{Remark}
\newtheorem{definition}{Definition}[section]
\newtheorem{mlem}{Lemma}
\newtheorem{lemma}[definition]{Lemma}
\newtheorem{proposition}[definition]{Proposition}

\theoremstyle{remark}
\newtheorem{remark}[definition]{Remark}
\numberwithin{equation}{section}

\newcommand{\R}{\mathbb{R}}

\newcommand{\lap}{\mbox{$\triangle$}}

\newcommand{\fr}{\displaystyle\frac}
\newcommand{\jf}{\displaystyle\int}

\newcommand{\mb}{\mbox}

\newcommand{\be}{\begin{equation}}
\newcommand{\ee}{\end{equation}}
\newcommand{\bee}{\begin{equation*}}
\newcommand{\eee}{\end{equation*}}


\setlength{\textwidth}{15.5cm}
\setlength{\textheight}{22.5cm}
\setlength{\topmargin}{-.5cm}
\setlength{\oddsidemargin}{4mm}
\setlength{\evensidemargin}{4mm}
\setlength{\abovedisplayskip}{3mm}
\setlength{\belowdisplayskip}{3mm}
\setlength{\abovedisplayshortskip}{0mm}
\setlength{\belowdisplayshortskip}{2mm}
\setlength{\baselineskip}{12pt}
\setlength{\normalbaselineskip}{12pt}

\title{ Interior regularity for fully fractional evolution equations and a priori estimates on unbounded domains}

\author{Wenxiong Chen,
Yahong Guo, Congming Li}

\begin{document}

\maketitle

\begin{abstract}

In this paper, we study the fully fractional heat equation involving the master operator:
$$
(\partial_t -\Delta)^{s} u(x,t) = f(x,t)\ \ \mbox{in}\ \R^n\times\R.
$$

We derive interior H\"{o}lder and Schauder estimates for non-negative solutions that depend solely on local data. Owing to the non-locality of the operator, earlier work required {\em uniform global bounds} to control higher local norms--an assumption that fails during blow-up and rescaling on {\em unbounded domains}, where rescaled solutions may develop arbitrarily large tails.

After introducing some new ideas and novel techniques, such as using a {\em directional perturbation average} to derive a key estimate for the fully fractional heat kernel, we are able to employ {\em local bounds} of solutions to replace global bounds to control their higher local norms.

Based on these new estimates, we are capable to apply the blow-up and rescaling arguments to establish a priori estimates
for solutions to a broader class of nonlocal equations in unbounded domains, such as
$$(\partial_t -\Delta)^{s} u(x,t) =  b(x,t) |\nabla_x u (x,t)|^q + f(x, u(x,t))\ \ \mbox{in}\ \ \R^n\times\R.$$
Under appropriate conditions, we prove that all nonnegative solutions, along with their spatial gradients, are
uniformly bounded.

 \end{abstract}

 \bigskip

\textbf{Mathematics subject classification (2020): }  35R11; 35K99; 47G30.
\bigskip

\textbf{Keywords:}  master equations, interior regularity estimates, blowing up and re-scaling, a priori estimates     \\
\medskip

\section{Introduction}

In this paper, we first establish new regularity estimates for nonnegative solutions for the following master equations
\begin{equation}\label{1.0}
(\partial_t -\Delta)^{s} u(x,t) =  f(x,t)\ \ \mbox{in}\ \ \R^n\times\R.
\end{equation}
Using these regularity results, we then apply blow-up and rescaling analysis to derive a priori estimates for solutions to the nonlinear equations
\begin{equation}\label{1.1}
(\partial_t -\Delta)^{s} u(x,t) =  b(x,t) |\nabla_x u (x,t)|^q + f(x, u(x,t))\ \ \mbox{in}\ \ \R^n\times\R.
\end{equation}

Here the fully fractional heat operator $(\partial_t-\Delta)^s$, initially introduced by M. Riesz in \cite{Riesz}, is a nonlocal pseudo-differential operator of order $2s$ in the spatial variables and of order $s$ in the time variable. It is defined by the following singular integral
\begin{equation}\label{nonlocaloper}
(\partial_t-\Delta)^s u(x,t)
:=C_{n,s}\int_{-\infty}^{t}\int_{\mathbb{R}^n}
  \frac{u(x,t)-u(y,\tau)}{(t-\tau)^{\frac{n}{2}+1+s}}e^{-\frac{|x-y|^2}{4(t-\tau)}}\operatorname{d}\!y\operatorname{d}\!\tau,
\end{equation}
where $0<s<1$, the integral in $y$ is in the sense of Cauchy principal value, and
the normalization constant is given by $$C_{n,s}=\frac{1}{(4\pi)^{\frac{n}{2}}|\Gamma(-s)|}$$
with $\Gamma(\cdot)$ denoting the Gamma function.

This operator is inherently nonlocal in both space and time, as the value of $(\partial_t-\Delta)^s u$ at any point $(x,t)$ depends on the values of $u$ over the entire spatial domain $\mathbb{R}^n$ and all past times before $t$.

We say that $u$ is a classical entire solution of \eqref{1.0} if
 $$u(x,t)\in C^{2s+\epsilon,s+\epsilon}_{x,\, t,\, {\rm loc}}(\mathbb{R}^n\times\mathbb{R}) \cap \mathcal{L}(\mathbb{R}^n\times\mathbb{R})$$
for some $\varepsilon >0$.
This condition ensures that the singular integral in \eqref{nonlocaloper} converges. Here,
the slowly increasing function space $\mathcal{L}(\mathbb{R}^n\times\mathbb{R})$ is defined by
$$ \mathcal{L}(\mathbb{R}^n\times\mathbb{R}):=\left\{u(x,t) \in L^1_{\rm loc} (\mathbb{R}^n\times\mathbb{R}) \mid \int_{-\infty}^t \int_{\mathbb{R}^n} \frac{|u(x,\tau)|e^{-\frac{|x|^2}{4(t-\tau)}}}{1+(t-\tau)^{\frac{n}{2}+1+s}}\operatorname{d}\!x\operatorname{d}\!\tau<\infty,\,\, \forall \,t\in\mathbb{R}\right\}.$$
The definition of the local parabolic H\"{o}lder space $C^{2s+\epsilon,s+\epsilon}_{x,\, t,\, {\rm loc}}(\mathbb{R}^n\times\mathbb{R})$ will be specified in Section 2.

It is worth noting that the fractional powers of heat operator $(\partial_t-\Delta)^s$
reduces to the classical heat operator $\partial_t-\Delta$ as $s\rightarrow 1$ (cf. \cite{FNW}). Moreover,
when the space-time nonlocal operator $(\partial_t-\Delta)^s$ is applied to a function $u$ that depends only on the spatial variable $x$, it simplifies to
 \begin{equation*}
   (\partial_t-\Delta)^s u(x)=(-\Delta)^s u(x),
 \end{equation*}
where $(-\Delta)^s$ denotes the well-known fractional Laplacian, defined by
$$ (-\Delta)^s u(x)=C_{n, s}PV \int_{\mathbb{R}^n}\frac{u(x)-u(y)}{|x-y|^{n+2s}}dy.$$

This operator is of considerable interest due to its wide-ranging applications in various scientific fields, including  physics, chemistry, and biology. It arises in context such as anomalous diffusion, quasi-geostrophic flows,
thin obstacle problem, phase transitions, crystal dislocation, flame propagation, conservation
laws, multiple scattering, minimal surfaces, optimization, turbulence models, water waves,
molecular dynamics, and image processing ( see \cite{AB, BG, CV, GO} and references therein).

Furhtermore, these operators play crucial roles in probability
and finance \cite{Be} \cite{CT} \cite{RSM}. In particular, the fractional Laplacians can be interpreted as the
infinitesimal generator of a stable L\'{e}vy process \cite{Be}.

In recent decades, followed the pioneer work of Caffarelli and Silvestre \cite{CS}, significant attention has been devoted to the study of solutions to fractional elliptic equations, leading to a wealth of important results. For further details,  interested readers may consult \cite{AGHW, AWY, CL, CLL, CLL1, CLZ,  CZhu, CW, CWW, DLL, FLS, KMW, LLW, LW, LWX, LZ, LZ1}. For late developments concerning qualitative properties of various fractional parabolic equations, we referred to \cite{CG, CM, GMZ2, WuC}  and references therein.

In the special case when $u=u(t)$, we have:
 \begin{equation*}
   (\partial_t-\Delta)^s u(t)=\partial_t^s u(t),
 \end{equation*}
where $\partial_t^s$ denotes the Marchaud fractional derivative of order $s$, defined as \begin{equation}\label{1.00}
\partial^s_t u(t)=C_s \jf_{-\infty}^t\fr{u(t)-u(\tau)}{(t-\tau)^{1+s}}d\tau.
\end{equation}
This derivative arises in various physical phenomena, such as
particle systems with sticking and trapping effects, magneto-thermoelastic heat conduction, plasma turbulence and more (cf. \cite{ACV,ACV1, DCL1, DCL2, EE}).

The space-time nonlocal equations \eqref{1.0} and \eqref{1.1} are also significant in many physical and biological applications, including anomalous diffusion \cite{KBS}, chaotic dynamics \cite{Z}, biological invasions \cite{BRR}. In the financial domain, these equations are valuable for modeling situations where the waiting time between transactions correlates with subsequent price jumps (cf. \cite{RSM}). One notable application of the master equation is its role in describing continuous-time random walks, where $u$ represent the distribution of particles undergoing random jumps alongside random time lags (cf. \cite{MK}).
\medskip

\leftline{\bf \large The regularity estimates}
\medskip

Let
$$Q_r (x^o, t_o) = \{(x,t) \in \R^n \times \R \mid |x-x^o| < r, \; |t-t_o| < r^2 \}$$
denote the parabolic cylinder of size $r$ centered at $(x^o, t_o)$.

We use the standard notation for parabolic H\"{o}lder space. A function $u(x,t)$ is said to belong to $C_{x,t}^{\alpha, \beta}$ if $u$ is $C^\alpha$ in the spatial variable $x$ and $C^\beta$ in the time variable $t$. If $\alpha <1$, $C^\alpha$ is the usual H\"{o}lder space.
If $1<\alpha <2$, $C^\alpha$ denotes the space of bounded functions whose first derivatives belong to $C^{\alpha -1}$, and so on.  For more precise definitions, please see Section 2.

Regarding regularity estimates for master equation \eqref{1.0}, Stinga and Torrea \cite{ST}, among other results, established the following local H\"{o}lder estimate ($0<s<1/2$ case)
\begin{equation}  \label{ST1}
\|u\|_{C_{x,t}^{2s, s}(Q_1(0,0))} \leq C \left( \|f\|_{L^\infty(Q_2(0,0))} + \|u\|_{L^\infty (\mathbf{\R^n \times (-\infty, 4)})}\right),
\end{equation}
and Schauder estimate:
\begin{equation}  \label{ST2}
\|u\|_{C_{x,t}^{2s + \alpha, s + \alpha/2}(Q_1(0,0))} \leq C \left( \|f\|_{C_{x,t}^{\alpha, \alpha/2}(Q_2(0,0))} + \|u\|_{L^\infty (\bf{\R^n \times (-\infty, 4)})}\right) .
\end{equation}

A similar H\"{o}lder estimate was obtained by Caffarelli and Silvestre in \cite{CS1}.

In order to circumvent the nonlocality of the master operator, the authors in \cite{ST} relied on the extension method introduced by Caffarelli and Silvestre \cite{CS}.

However, due to the nonlocal nature of the fully fractional heat operator, these results appear significantly weaker than those obtained for the
 classical heat equation
$$ (\partial_t - \Delta) u(x,t) = f(x,t),  $$
where the global norm $\|u\|_{L^\infty (\R^n \times (-\infty, 4))}$ on the right hand sides of \eqref{ST1} and \eqref{ST2} can  be replaced by a weaker local norm $\|u\|_{L^\infty (Q_2(0,0))}$.

 More critically, the presence of such global norms on the right hand sides of \eqref{ST1} and \eqref{ST2} makes it impossible to apply the regularity estimates  to the well-known blow-up  and rescaling arguments to establish a priori estimates for fractional equations on unbounded domains.

As an illustration, let us compare a simple problem of local nature
\begin{equation}
(\partial_t - \Delta) u(x,t) = u^p(x,t), \;\; (x,t) \in \R^n \times \R,
\label{1001}
\end{equation}
with its nonlocal counter part:
\begin{equation}
(\partial_t - \Delta)^s u(x,t) = u^p(x,t), \;\; (x,t) \in \R^n \times \R.
\label{1002}
\end{equation}

Here the right hand side can actually be replaced by a more general nonlinearity $f(x, u, \nabla u)$.

To obtain an a priori estimate--namely, to show that all nonnegative solutions are uniformly bounded--one typically employs a blow-up and rescaling argument, which proceeds as follows.

If the solutions are not uniformly bounded, then there exist a sequence of solutions $\{u_k\}$ and a sequence of points $\{(x^k, t_k)\}$,
such that $u_k(x^k, t_k) \to \infty$. Note that this $u_k(x^k,t_k)$ may not be chosen to be the maximum of $u_k$ in $\R^n \times \R$, because each $u_k$ may be unbounded. However, similar to the well-known {\em doubling lemma}, one can find a nearby point $(\bar{x}^k, \bar{t}_k)$ and some small $\lambda_k$, such that
$$ u_k(x,t) \leq C \, u_k (\bar{x}^k, \bar{t}_k) \equiv C \, m_k, \;\; \forall \, (x,t) \in Q_{\lambda_k R} (\bar{x}^k, \bar{t}_k).$$
Here $R>0$ can be chosen arbitrary large.

Upon re-scaling
$$ v_k (x,t) = \frac{1}{m_k} u_k (\lambda_k x + x^k, \lambda^2_k t + t_k),$$
the new sequence of functions $v_k$ becomes bounded and satisfy similar equations on certain parabolic cylinders $Q_R (0,0)$. However, outside $Q_R(0,0)$, no control over $v_k$ is available, and it may even become unbounded.

{\em For equation \eqref{1001} of local nature}:

The boundedness of $\{v_k\}$ within $Q_R(0,0)$  is  {\em sufficient} to ensure boundedness of its higher derivatives. This allows one to extract a subsequence of  $\{v_k\}$ that converges to a function $v$, which is a bounded solution of the same equation. By using known results on the nonexistence of bounded positive solutions (cf. Quittner \cite{Q}), one can derive a contradiction and thereby establish  the desired a priori estimate. Specifically, Quittner showed that no {\em bounded} positive solutions for \eqref{1001} when $1<p< \frac{n+2}{n-2}$. Using  this result in combination with the blow-up and rescaling arguments, the non-existence of {\em unbounded} positive solutions can also be deduced.

{\em For equation   \eqref{1002} of nonlocal nature}:

In contrast, the available regularity estimates, such as   \eqref{ST1} and \eqref{ST2}, reveal that the boundedness of $\{v_k\}$ on $Q_R(0,0)$ is {\em insufficient} to guarantee the bound on its higher norms. Instead, to bound these norms, $\{v_k\}$ {\em must be bounded globally across the entire
space.} This condition cannot be satisfied since no  information about the behavior of $v_k$ beyond $Q_R(0,0)$ is available during the rescaling process. This presents  a substantial difficulty when attempting to use blow-up and rescaling arguments for nonlocal equations on unbounded domains.

These challenges lead to the following natural question:

{\em Can one replace  the  global norm $\|u\|_{L^\infty (\R^n \times (-\infty, 4))}$  with the weaker local norm $\|u\|_{L^\infty (Q_2(0,0))}$ to bound the higher norm of $u$ under certain conditions? }
\smallskip

Addressing this challenging question is one of the main objectives of the present paper. Rather than relying on the {\em extension method}, we approach this issue directly through integral representations of solutions . After introducing some brand new ideas and through careful analysis, we answer this question affirmatively for nonnegative solutions $u(x,t)$ to master equation \eqref{1.0} with  $f(x,t)\geq 0$. We establish the following regularity estimates:
\begin{mthm} (H\"{o}lder estimates) \label{mthmu1} Assume that both $f(x,t)$ and $u(x,t)$ are bounded in $Q_2(0,0)$. Then there exists a positive constant $C$, such that
$$
\|u\|_{C^{2s, s}_{x, t} (Q_1(0,0))} \leq C \, \left( \|f\|_{L^\infty (Q_2(0,0))} + \|u\|_{L^\infty (\mathbf{Q_2(0,0)})}\right), \;\; \mbox{ if } s \neq 1/2.
$$
$$
\|u\|_{C^{log L, s}_{x, t} (Q_1(0,0))} \leq C \, \left( \|f\|_{L^\infty (Q_2(0,0))} + \|u\|_{L^\infty (\mathbf{Q_2(0,0)})} \right), \;\; \mbox{ if } s = 1/2.
$$
\end{mthm}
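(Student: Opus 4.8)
The strategy is to peel off from $u$ a \emph{Riesz potential} that carries all the dependence on $f$, leaving a remainder that solves the \emph{homogeneous} master equation in a smaller cylinder and whose regularity must be forced out of an integral representation together with the sign condition $u\ge0$. Let $\Phi_s(x,t)=c_{n,s}\,t^{\,s-1-n/2}\,e^{-|x|^2/(4t)}$ for $t>0$ (and $\equiv 0$ for $t\le0$) be the fundamental solution of $(\partial_t-\Delta)^s$, i.e. the kernel of $(\partial_t-\Delta)^{-s}$; it is nonnegative, smooth off the origin, and $\int_{\R^n}\Phi_s(y,\sigma)\,dy=c\,\sigma^{s-1}$. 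Fix $\zeta\in C_c^\infty$ with $\zeta\equiv1$ on $Q_{3/2}$, $\operatorname{supp}\zeta\subset Q_{7/4}$, $0\le\zeta\le1$, and set
\[
w(x,t):=\int_{-\infty}^{t}\!\!\int_{\R^n}\Phi_s(x-y,t-\tau)\,(\zeta f)(y,\tau)\,dy\,d\tau,\qquad v:=u-w,
\]
so that $(\partial_t-\Delta)^s w=\zeta f$ and hence $(\partial_t-\Delta)^s v=(1-\zeta)f$, which vanishes on $Q_{3/2}$.

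\textbf{Step 1: the potential $w$.} Using only $\|\zeta f\|_\infty\le\|f\|_{L^\infty(Q_2)}$ and the explicit kernel, I would prove $\|w\|_{L^\infty(\R^n\times\R)}\le C\|f\|_{L^\infty(Q_2)}$ (from $\int_0^T\sigma^{s-1}\,d\sigma<\infty$) together with $\|w\|_{C^{2s,s}_{x,t}(Q_1)}\le C\|f\|_{L^\infty(Q_2)}$ when $s\ne1/2$, and the matching $C^{\log L,s}$ bound when $s=1/2$. The Hölder part reduces to controlling increments of $\Phi_s$, e.g. $\iint|\Phi_s(x_1-y,\sigma)-\Phi_s(x_2-y,\sigma)|\,dy\,d\sigma$ and the analogous time increment, by splitting the regions $\sigma\lessgtr|x_1-x_2|^2$ and $|x_i-y|\lessgtr|x_1-x_2|$ — precisely the integral handled by the directional perturbation average estimate of Lemma~\ref{key0}, which yields the sharp modulus $|x_1-x_2|^{2s}$ and, at the borderline $2s=1$, the logarithmic loss $|x_1-x_2|\,\bigl|\log|x_1-x_2|\bigr|$. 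For $s>1/2$ one differentiates once in $x$, using $|\nabla_x\Phi_s(y,\sigma)|\le C\sigma^{-1}|y|\,\Phi_s(y,\sigma)$ so that $\int_{\R^n}|\nabla_x\Phi_s(y,\sigma)|\,dy\sim\sigma^{s-3/2}$ is integrable near $\sigma=0$ exactly when $s>1/2$, and repeats the argument for $\nabla_x w$; the failure of this integrability at $s=1/2$ is the source of the $\log L$ dichotomy in the statement.

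\textbf{Steps 2--3: the homogeneous remainder and the role of nonnegativity.} Since $u\ge0$ and $\|w\|_\infty\le C\|f\|_{L^\infty(Q_2)}$, the shifted function $\bar v:=v+C\|f\|_{L^\infty(Q_2)}$ is nonnegative on $\R^n\times\R$, still solves $(\partial_t-\Delta)^s\bar v=0$ in $Q_{3/2}$ (constants are annihilated), and satisfies $\|\bar v\|_{L^\infty(Q_2)}\le A:=\|u\|_{L^\infty(Q_2)}+C\|f\|_{L^\infty(Q_2)}$; it then suffices to bound $\|\bar v\|_{C^{2s,s}_{x,t}(Q_1)}$ (resp. $C^{\log L,s}$) by $CA$. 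By the maximum principle for the master operator, for $(x,t)\in Q_{5/4}$ one has $\bar v(x,t)=\int_{\mathcal E_t}K(x,t;y,\tau)\,\bar v(y,\tau)\,dy\,d\tau$ with a nonnegative Poisson kernel $K$, where $\mathcal E_t:=(\R^n\times(-\infty,t))\setminus\overline{Q_{5/4}}$. Split $\mathcal E_t=\mathcal E_1\cup\mathcal E_2$ with $\mathcal E_1:=\mathcal E_t\cap Q_2$ and $\mathcal E_2:=\mathcal E_t\setminus Q_2$, and estimate an increment $\bar v(x,t)-\bar v(x',t')$, $(x,t),(x',t')\in Q_1$, on each piece. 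On $\mathcal E_1$ we use $\bar v\le A$ together with the sharp bounds of Lemma~\ref{key0} for $K$ near $\partial Q_{5/4}$, which give $\int_{\mathcal E_1}|K(x,t;\cdot)-K(x',t';\cdot)|\le C\,d^{2s}$ (and $C\,d|\log d|$ at $s=1/2$), $d$ being the parabolic distance between the two points; this contributes $\le CA\,d^{2s}$. On $\mathcal E_2$ the density $\bar v$ is \emph{not} controlled, but $(y,\tau)$ stays bounded away from $\overline{Q_1}$, so $K(\cdot;y,\tau)$ and its derivatives are comparable on $Q_1$ to $K(x_0,t_0;y,\tau)$ for a fixed $(x_0,t_0)\in Q_1$; hence the $\mathcal E_2$-contribution is at most
\[
C\,d^{2s}\!\int_{\mathcal E_2}\!K(x_0,t_0;y,\tau)\,\bar v(y,\tau)\,dy\,d\tau\ \le\ C\,d^{2s}\!\int_{\mathcal E_t}\!K(x_0,t_0;\cdot)\,\bar v\ =\ C\,d^{2s}\,\bar v(x_0,t_0)\ \le\ C\,A\,d^{2s},
\]
where dropping $\mathcal E_1$ and the last step use $\bar v\ge0$ and $(x_0,t_0)\in Q_1\subset Q_2$. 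This is the exact point where the sign hypothesis substitutes for a global bound: the uncontrolled far values of $\bar v$ enter only through a nonnegative average pinned to the \emph{local} number $\bar v(x_0,t_0)\le A$. For $s>1/2$ one runs the same two-region argument for $\nabla_x\bar v$ (differentiating $K$ once in $x$; far away $|\nabla_xK|\lesssim K(\cdot)$, so the $\mathcal E_2$ tail is again $\le\bar v(\text{reference})\le A$), upgrading the spatial regularity to $C^{2s}$; time increments are treated identically. Combining with Step~1 gives $\|u\|_{C^{2s,s}(Q_1)}\le\|w\|_{C^{2s,s}(Q_1)}+\|\bar v\|_{C^{2s,s}(Q_1)}\le C(\|f\|_{L^\infty(Q_2)}+\|u\|_{L^\infty(Q_2)})$, and the $s=1/2$ case follows the same lines with $C^{\log L,s}$ in place of $C^{2s,s}$.

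\textbf{Anticipated main obstacle.} The delicate part is the kernel analysis behind Step~1 and the $\mathcal E_1$ estimate of Step~3: extracting the \emph{sharp} modulus $d^{2s}$, with the precise borderline $d|\log d|$ at $s=1/2$, from increments of $\Phi_s$ and of the Poisson kernel $K$ near the parabolic boundary, both of which carry the $\sigma^{s-1-n/2}$-type time singularity. A crude differentiation loses too much; the remedy is Lemma~\ref{key0}, where one estimates not a pointwise kernel increment but a \emph{directional perturbation average} of the kernel, just regular enough to recover the critical exponent with constants independent of the two points. Keeping those constants uniform and carrying the three regimes $s<1/2$, $s=1/2$, $s>1/2$ (the last requiring one genuine spatial derivative, hence the stronger $C^{2s}$ conclusion) in parallel is the technical heart; granted Lemma~\ref{key0}, the remaining pieces — the $L^\infty$ and Hölder bounds for $w$, the reduction to the nonnegative $\bar v$, and the tail bound via nonnegativity — are then routine.
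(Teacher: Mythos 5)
Your Step 1 (the potential of the truncated right-hand side) is sound and parallels the paper's treatment of the nonhomogeneous part $w$ in Section 4, although there the estimates are obtained by direct splitting and the mean value theorem rather than by Lemma \ref{key0}. The problem lies in Steps 2--3. Your entire treatment of the homogeneous remainder rests on the representation $\bar v(x,t)=\int_{\mathcal E_t}K(x,t;y,\tau)\,\bar v(y,\tau)\,dy\,d\tau$ with a nonnegative Poisson kernel $K$ for the exterior/past-value problem of $(\partial_t-\Delta)^s$ on a parabolic cylinder, together with quantitative properties of $K$: integrable increment bounds of order $d^{2s}$ (with a log at $s=1/2$) over the near region $\mathcal E_1$, and uniform comparability of $K(\cdot;y,\tau)$ and its derivatives with $K(x_0,t_0;y,\tau)$ over the far region $\mathcal E_2$. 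None of this is available: no such Poisson kernel for cylinders of the master operator is constructed in the paper or cited, its existence/uniqueness for functions that are merely nonnegative and in $\mathcal L$ (unbounded outside $Q_2$) is exactly the kind of delicate issue that forces the paper to invoke a Liouville theorem with a growth condition when $s>1/2$, and the attribution of the needed kernel bounds to Lemma \ref{key0} is incorrect --- that lemma estimates the fundamental solution $G(y,\tau)$ of $(\partial_t-\Delta)^s$ by translates of itself, not any cylinder Poisson kernel. As written, the key inequality of your Step 3 is therefore an assumption, not a proof, and it is a substantial missing piece rather than a routine verification.

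For comparison, the paper takes a different route that avoids boundary-value machinery altogether: using $u\ge 0$ and $f\ge 0$, a maximum-principle argument plus the Liouville theorem of \cite{CGM} yields the global representation $u=c+\int G\,f$ (Theorem \ref{mthm1}); then $u$ is split as $w+v$ with $w$ the potential of $f\chi_Q$ and $v$ the potential of $f\chi_{Q^c}\ge 0$, so $v$ comes with an \emph{explicit} kernel representation whose density is nonnegative and supported away from $\tilde Q$. Lemma \ref{key0} (the directional perturbation average, applied to $G$ itself) then bounds $\nabla_x v$, $D^2_x v$ and $\partial_t v$ pointwise by $\sum_j v(x^j,t)\le C\|v\|_{L^\infty(Q)}\le C(\|u\|_{L^\infty(Q_2)}+\|f\|_{L^\infty(Q_2)})$, which is where the sign hypothesis substitutes for a global bound. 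If you want to salvage your outline, you would either have to construct and estimate the cylinder Poisson kernel for $(\partial_t-\Delta)^s$ (a project in itself), or replace Steps 2--3 by the paper's global-representation argument, for which the Liouville theorem is the essential input.
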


\begin{mthm} (Schauder estimates) \label{mthmu2} Assume that $f \in C^{\alpha, \alpha/2}_{x,t}  (Q_2(0,0))$ for some $0 < \alpha < 1$. Then there exists a positive constant $C$, such that
$$
\|u\|_{C^{2s+ \alpha, s+\alpha/2}_{x, t} (Q_1(0,0))} \leq C \, \left(\|f\|_{C^{\alpha, \alpha/2}_{x,t}  (Q_2(0,0))} + \|u\|_{L^\infty (\mathbf{Q_2(0,0)})} \right), \; \; \; \mbox{ if } \; 2s +\alpha < 1 \mbox{ or } 1< 2s +\alpha < 2;
$$
$$
\|u\|_{C^{\log L, s+\alpha/2}_{x, t} (Q_1(0,0))} \leq C \, \left(\|f\|_{C^{\alpha, \alpha/2}_{x,t}  (Q_2(0,0))} + \|u\|_{L^\infty (\mathbf{Q_2(0,0)})} \right), \; \; \; \mbox{ if } \; 2s + \alpha = 1;
$$
$$
\|u\|_{C^{1+\log L, \log L}_{x, t} (Q_1(0,0))} \leq C \, \left(\|f\|_{C^{\alpha, \alpha/2}_{x,t}  (Q_2(0,0))} + \|u\|_{L^\infty (\mathbf{Q_2(0,0)})} \right), \;\;\; \mbox{ if } \; 2s + \alpha = 2.
$$
\end{mthm}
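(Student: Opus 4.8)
The plan is to represent the solution $u$ in $Q_2(0,0)$ via a cutoff and the fundamental solution of the fully fractional heat operator, then estimate the regularity of each piece using the kernel estimate of Lemma \ref{key0}. Concretely, choose $\eta \in C_c^\infty(Q_2(0,0))$ with $\eta \equiv 1$ on $Q_{3/2}(0,0)$, and write $u = \eta u + (1-\eta) u$. The function $w := \eta u$ satisfies $(\partial_t - \Delta)^s w = \eta f + g$ on a neighborhood of $Q_{3/2}$, where $g = (\partial_t-\Delta)^s(\eta u) - \eta (\partial_t-\Delta)^s u$ is an error term coming from the nonlocal commutator. The point of the cutoff is twofold: it localizes the source data to $Q_2$, and it turns the troublesome ``tail'' contribution — which in \eqref{ST1}, \eqref{ST2} forced the global $L^\infty$ norm — into the commutator $g$ together with a convolution of $u$ against kernel values over the region $\{\eta \neq 1\}$. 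First I would solve the Cauchy-type problem for $w$ on all of $\R^n \times \R$ by convolving the right-hand side with the Poisson-type kernel $P(x,t)$ for $(\partial_t-\Delta)^s$ (the Riesz fundamental solution), obtaining an explicit integral representation $w(x,t) = \int P(x-y, t-\tau)\,[\eta f + g](y,\tau)\,dy\,d\tau$ valid in $Q_1$. Uniqueness of slowly-increasing solutions (via the maximum principle available for this operator) guarantees $w$ agrees with $\eta u$.

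Next I would differentiate this representation and estimate the H\"older (resp. Schauder) seminorms. For the H\"older estimate one writes, for $(x,t),(x',t') \in Q_1$, the difference $w(x,t) - w(x',t')$ as an integral of $[P(x-y,t-\tau) - P(x'-y,t'-\tau)]$ against the bounded data $\eta f + g$, and splits the $y,\tau$ integration into a region near the singularity of $P$ and a region away from it. On the near region one uses the size bound on $P$ and its scaling ($P$ is homogeneous of the appropriate parabolic degree), which yields a gain of exactly $2s$ in space and $s$ in time — this is where the exponents $2s, s$ (and the logarithmic correction at $s = 1/2$, coming from a borderline divergent integral $\int_\rho^1 r^{-1}\,dr \sim \log(1/\rho)$) enter. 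On the far region one uses the decay of $P$ and, crucially, the estimate of Lemma \ref{key0} — the \emph{directional perturbation average} bound on the kernel — to control the difference quotient of $P$ uniformly in the tail. For the Schauder estimate one subtracts the appropriate Taylor polynomial of the data at the base point before convolving, so that one estimates $\int [P(x-y,\cdots)-\text{(Taylor-expanded shifts)}]\,[\eta f(y,\tau) - \text{poly}]$; the H\"older regularity $C^{\alpha,\alpha/2}$ of $f$ supplies a factor $|y|^\alpha + |\tau|^{\alpha/2}$ that, combined with the $2s$-order smoothing of $P$, produces $C^{2s+\alpha, s+\alpha/2}$ regularity, with the same borderline $\log L$ phenomena when $2s+\alpha \in \{1,2\}$.

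The step I expect to be the genuine obstacle is controlling the commutator term $g = (\partial_t-\Delta)^s(\eta u) - \eta(\partial_t-\Delta)^s u$ using only the \emph{local} bound $\|u\|_{L^\infty(Q_2)}$, rather than a global bound on $u$. Writing $g$ out from \eqref{nonlocaloper}, one gets (schematically) $g(x,t) = C_{n,s}\int_{-\infty}^t\int_{\R^n} \frac{[\eta(x,t)-\eta(y,\tau)]\,[u(x,t)-u(y,\tau)]}{(t-\tau)^{n/2+1+s}} e^{-|x-y|^2/4(t-\tau)}\,dy\,d\tau$ plus a term involving $u(x,t)\,(\partial_t-\Delta)^s\eta$. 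The first term is supported (in the $\eta$ factors) inside $Q_2$ when $(x,t) \in Q_{3/2}$ — because $\eta(x,t)-\eta(y,\tau) = 0$ unless $(x,t)$ or $(y,\tau)$ lies in $\operatorname{supp}\nabla\eta \subset Q_2$ — so only \emph{local} values of $u$ appear, and the Gaussian-in-time-scaled kernel is integrable against the Lipschitz factor $\eta(x,t)-\eta(y,\tau)$; this is precisely where the nonnegativity $u \geq 0$ and $f \geq 0$ must be exploited, since it lets one bound the part of the integral over $(y,\tau)\notin Q_2$ by comparing $u$ to the solution of the equation with zero exterior data and invoking the maximum principle to get $0 \le u(y,\tau) \le C(\|f\|_{L^\infty(Q_2)} + \|u\|_{L^\infty(Q_2)})\cdot(\text{integrable kernel weight})$, instead of needing a pointwise global bound on $u$. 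Making this comparison rigorous — i.e., showing that a one-sided (lower) bound plus local upper bound on $u$, together with $f\ge 0$, suffices to tame the tail of the master operator — is the crux, and it is what the ``several new ideas'' alluded to in the introduction must accomplish; I would organize it as a separate lemma bounding $\int_{(\R^n\times\R)\setminus Q_2} P(x-y,t-\tau)\,u(y,\tau)\,dy\,d\tau \le C(\|f\|_{L^\infty(Q_2)}+\|u\|_{L^\infty(Q_2)})$ for $(x,t)\in Q_1$, proved by constructing an explicit supersolution on $\R^n\times\R$ that dominates $u$ and has controlled tails.
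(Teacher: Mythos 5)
Your overall architecture differs from the paper's: you cut off the solution ($u=\eta u+(1-\eta)u$) and fight the resulting commutator, whereas the paper never cuts off $u$ at all. It first proves (Theorem \ref{mthm1}) that nonnegativity of $u$ and $f$, together with a Liouville theorem from \cite{CGM}, forces the representation $u=c+\int_{-\infty}^t\int_{\R^n}G(x-y,t-\tau)f(y,\tau)\,dy\,d\tau$, then splits the \emph{source}, $f=f\chi_Q+f\chi_{Q^c}$, so that $u=w+v$ with $w=G*(f\chi_Q)$ handled by routine potential estimates (your "near part"; Theorems \ref{mthmw1}, \ref{mthmw2}) and the tail part $v=G*(f\chi_{Q^c})$ handled by the directional perturbation average of Lemma \ref{key0}: every differentiated tail integral $\int f_{Q^c}\,|D^\beta G(x-y,t-\tau)|$ is dominated by $\sum_j\int f_{Q^c}\,G(x^j-y,t-\tau)=\sum_j v(x^j,t)\le C\|v\|_{L^\infty(Q)}\le C(\|u\|_{L^\infty(Q_2)}+\|f\|_{L^\infty(Q_2)})$, and this is exactly where $f\ge 0$ is used. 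A different route could in principle be acceptable, but your crux step does not go through.

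Two concrete problems. First, your support claim for the commutator is wrong: from \eqref{nonlocaloper}, for $(x,t)\in Q_{3/2}$ one has $g(x,t)=C_{n,s}\int_{-\infty}^t\int_{\R^n}\bigl[\eta(x,t)-\eta(y,\tau)\bigr]u(y,\tau)(t-\tau)^{-n/2-1-s}e^{-|x-y|^2/4(t-\tau)}\,dy\,d\tau$, and the bracket equals $1$ (not $0$) for every $(y,\tau)$ outside $\operatorname{supp}\eta$; so $g$ contains precisely the global tail $\int_{Q_2^c}K(x-y,t-\tau)\,u(y,\tau)$ with the \emph{operator} kernel $K$, i.e.\ the very quantity the global norm in \eqref{ST1}--\eqref{ST2} was controlling. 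Second, the fix you propose for it---an explicit supersolution built from $\|u\|_{L^\infty(Q_2)}$ and $\|f\|_{L^\infty(Q_2)}$ dominating $u$ on all of $\R^n\times\R$, or a pointwise bound $0\le u(y,\tau)\le C(\|f\|_{L^\infty(Q_2)}+\|u\|_{L^\infty(Q_2)})\cdot(\text{weight})$ outside $Q_2$---cannot exist, because outside $Q_2$ the only information on $f$ is $f\ge 0$: taking $f(y,\tau)=M\phi(y)\psi(\tau+T)$ concentrated in the far past with $M\sim T^{n/2+1-s}$ keeps all local data of order one while $u$ near the source is of order $M$, arbitrarily large. So no comparison function determined by local data can dominate $u$ globally, and the maximum principle has nothing to act on; the tail bound you want is in effect a consequence of the theorem, not an available ingredient. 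The paper's decomposition avoids ever needing pointwise control of $u$ (or of $K$ applied to $u$) outside $Q_2$, because after the representation the tail is an integral of the \emph{nonnegative data} $f_{Q^c}$ against the fundamental solution $G$, and Lemma \ref{key0} reassembles its derivatives into finitely many values of $v$ at interior points. Your outline has no substitute for this step (nor for the representation theorem itself, which is where $u\ge0$, $f\ge0$ and the Liouville theorem enter), so as written the proof has a genuine gap at its central point.
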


Here $C^{Log \, L}$ is the standard ``Log-Lipschitz'' space and one can find the precise definition right before the statement of Theorem
\ref{mthmw1}.

We believe that this is the first instance where the local higher norms of solutions to a {\em nonlocal} equation can be effectively controlled by their local norms. This can be viewed as a breakthrough in the area, which makes it possible to employ the blow-up and re-scaling arguments to
establish a priori estimates for solutions to a family of similar nonlocal equations in unbounded domains, as the readers will see in a moments later.

\begin{mrem}
From the proofs in Section 3 and 4, it is evident that similar arguments apply to higher derivatives estimates. Here, we present only  the results necessary for our blow-up and rescaling analysis.
\end{mrem}

To obtain such regularity estimates, we first prove the equivalence between the pseudo differential equation and an integral equation.
\begin{mthm} \label{mthm1}
Assume that $f(x,t) \geq 0$ and  $u(x,t)$ is a nonnegative classical solution of \eqref{1.0}, then it satisfies the integral equation
\begin{equation} \label{inteq}
u(x,t) = c + \int_{-\infty}^t \int_{\mathbb{R}^n} f(y,\tau) G(x-y, t-\tau) dy d\tau,
\end{equation}
where
$$G(x-y, t-\tau) = \frac{C_{n,s}}{(t-\tau)^{n/2+1-s}} e^{-\frac{|x-y|^2}{4(t-\tau)}}$$
is the fundamental solution, or the Green's function associated with the master operator $(\partial_t - \lap)^s$, and $c$ is a nonnegative constant.
\end{mthm}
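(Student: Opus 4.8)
The plan is to establish \eqref{inteq} by showing that the two sides differ by a function that is caloric for the master operator and bounded below, and then invoking a Liouville-type argument to conclude that this difference is constant. First I would define the candidate solution
$$
w(x,t) := \int_{-\infty}^t \int_{\mathbb{R}^n} f(y,\tau)\, G(x-y, t-\tau)\, dy\, d\tau,
$$
and verify that the hypotheses ($f \geq 0$, $u \in \mathcal{L}(\mathbb{R}^n \times \mathbb{R})$ nonnegative and locally $C^{2s+\epsilon, s+\epsilon}$, solving \eqref{1.0}) guarantee that this integral converges and that $w$ is itself a classical solution of $(\partial_t - \Delta)^s w = f$. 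The key computation here is that $G$ is the fundamental solution of the master operator, i.e. $(\partial_t - \Delta)^s G = \delta_{(0,0)}$ in the appropriate sense; this should follow from the semigroup/subordination formula expressing $(\partial_t-\Delta)^{-s}$ via the classical heat kernel $e^{-\frac{|x|^2}{4\sigma}}(4\pi\sigma)^{-n/2}$ integrated against $\sigma^{s-1}/\Gamma(s)\,d\sigma$, which after the change of variables $\sigma = t-\tau$ produces exactly the stated $G(x-y,t-\tau) = C_{n,s}(t-\tau)^{-(n/2+1-s)} e^{-|x-y|^2/4(t-\tau)}$. One must also check $w \geq 0$ (immediate from $f, G \geq 0$) and that $w \in \mathcal{L}$ at least locally, so that $(\partial_t-\Delta)^s w$ makes pointwise sense.

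Next I would set $v := u - w$. By linearity of the operator, $v$ is a classical solution of $(\partial_t - \Delta)^s v = 0$ in all of $\mathbb{R}^n \times \mathbb{R}$. The crucial structural observation is that $v = u - w \geq -w$, but more importantly, since $u \geq 0$ we have $v \geq -w$; combined with the representation, the natural thing to prove is that $v$ is bounded below — indeed $v$ differs from the nonnegative function $u$ by the nonnegative function $w$, so $v \le u$ and $v$ need only be shown bounded from one side to run a Liouville argument. The cleanest route is: $v$ is a global solution of the homogeneous master equation and $v + w = u \geq 0$, hence $v \geq -w$; since $w \geq 0$ this does not immediately bound $v$, so instead I would argue that the function $c(t) := \inf_{x} v(x,t)$ (or an appropriate average) is monotone in $t$ using the maximum principle for $(\partial_t - \Delta)^s$, and that a global solution of the homogeneous equation bounded on one side must be constant. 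This is the Liouville theorem for the fully fractional heat operator; if it is available from the cited literature or provable by the standard rescaling/mean-value argument for caloric functions, it gives $v \equiv c$ with $c \geq 0$ (the sign of $c$ coming from $u \geq 0$, $w \geq 0$ and evaluating along a sequence where $w \to 0$, e.g. as $t \to -\infty$ or far in space).

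The main obstacle, I expect, is precisely the Liouville step: proving that an entire solution $v$ of $(\partial_t - \Delta)^s v = 0$ in $\mathbb{R}^n \times \mathbb{R}$ that is bounded from below (or above) must be constant. Unlike the local heat equation, the nonlocality means one cannot localize freely, and one must control the tail contributions of $v$ over all of space and all past time; the growth condition built into the definition of $\mathcal{L}(\mathbb{R}^n\times\mathbb{R})$ is exactly what is needed to make the relevant integrals converge, so the argument should go through by applying the master-operator maximum principle on large parabolic cylinders $Q_R$ and letting $R \to \infty$, using that the Gaussian kernel decay beats the slow growth of $v$. A secondary technical point is justifying the interchange of $(\partial_t - \Delta)^s$ with the integral defining $w$ — i.e. a Fubini/dominated-convergence argument near the singularity of $G$ — which requires the $C^{2s+\epsilon,s+\epsilon}_{\mathrm{loc}}$ regularity of $f$ (or at least enough integrability) to tame the principal-value integral; this is routine but must be done carefully. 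Once these two points are settled, collecting $u = v + w = c + w$ yields \eqref{inteq} with $c \geq 0$, completing the proof.
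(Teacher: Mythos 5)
Your overall skeleton---write $u = w + v$ with $w$ the Duhamel-type integral against $G$ and conclude via a Liouville theorem that $v$ is a nonnegative constant---is the same as the paper's, but the proposal is missing the one idea that makes both halves of that skeleton work. First, the convergence of $\int_{-\infty}^t\int_{\R^n} f\,G\,dy\,d\tau$ does not follow from the stated hypotheses as you assert: membership of $u$ in $\mathcal{L}(\R^n\times\R)$ constrains a weighted integral of $u$, not of $f$, and $f$ is only assumed nonnegative, so there is no a priori integrability of $f$ against the kernel. The paper proves convergence by truncation: set $w_R = \int (f\chi_{Q_R})\,G$, note that $w_R$ solves the equation with right-hand side $f\chi_{Q_R}$ and tends to $0$ as $|x|\to\infty$ or $t\to-\infty$, and show $v_R := u - w_R \geq 0$ by a maximum-principle argument (a negative minimum of $v_R$ would force $(\partial_t-\Delta)^s v_R < 0$ there, contradicting $(\partial_t-\Delta)^s v_R = f - f\chi_{Q_R} \geq 0$, where the decay of $w_R$ and $u\geq 0$ guarantee such a minimum is attained). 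Letting $R\to\infty$ (monotone convergence, since $f\geq 0$) yields simultaneously that the full integral converges and that $u \geq \tilde w$.

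Second, this comparison is exactly the one-sided bound you could not obtain: $\tilde v = u - \tilde w \geq 0$, so the Liouville theorem for entire solutions of the homogeneous master equation (Proposition \ref{Liouville}, quoted from \cite{CGM}; for $1/2<s<1$ it needs only the one-sided condition \eqref{AA}, automatic for a nonnegative function) gives $\tilde v \equiv c \geq 0$ at once. In your write-up you only derive $v \leq u$ and $v \geq -w$, correctly observe that neither is usable, and then substitute an unproved scheme (monotonicity of $\inf_x v(\cdot,t)$ plus a one-sided Liouville theorem you would still have to establish, together with the unjustified claim that the untruncated $w$ tends to $0$ as $t\to-\infty$ in order to fix the sign of $c$). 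A smaller point: verifying $(\partial_t-\Delta)^s w = f$ pointwise for the full $w$, which you list as a technical step, is not needed---the paper only uses the equation for the truncated $w_R$ and applies the Liouville theorem to $\tilde v$ as a distributional solution. So the gap is concrete: without the truncation-plus-maximum-principle comparison $u \geq w_R$, neither the convergence of the integral in \eqref{inteq} nor the sign of the homogeneous remainder is available, and your Liouville step cannot be run.
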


Since the constant $c$ in \eqref{inteq} does not influence either the H\"{o}lder norm or the derivatives of $u$, we may, without loss of generality, assume for the remainder of the paper that $c=0$ and hence
\begin{equation} \label{inteq1}
u(x,t) = \int_{-\infty}^t \int_{\mathbb{R}^n} f(y,\tau) G(x-y, t-\tau) dy d\tau,
\end{equation}

For simplicity of notation, we denote
$$ Q = Q_2(0,0), \mbox{ and } \; \; \tilde{Q} = Q_1(0,0) \subset \subset Q.$$

We split $u(x,t)$ into two parts $u(x,t)=v(x,t) + w(x,t)$ as  in the following.

Write
$$ f_Q (x,t) = \left\{ \begin{array}{ll} f(x,t), & (x,t) \in Q \\
0, & (x, t) \in Q^c.
\end{array}
\right.
$$

Denote
$$w(x,t) = \int_{-\infty}^t \int_{\mathbb{R}^n} f_Q(y,\tau) G(x-y, t-\tau) dy d\tau$$
Then $w(x,t)$ satisfies the nonhomogeneous equation in $Q$:
$$(\partial_t -\Delta)^s w(x,t) = f(x,t), \;\; \forall \, (x,t) \in Q.$$
While the homogeneous part can be expressed by
$$v(x,t):=u(x,t)-w(x,t)=\int_{-\infty}^t \int_{\mathbb{R}^n} f_{Q^c}(y,\tau) G(x-y, t-\tau) dy d\tau$$
with $f_{Q^c}:=f-f_Q$. Apparently, $v$ is a solution of the homogeneous equation in $Q$:
$$(\partial_t - \Delta)^s v(x,t) = 0, \;\; \forall \, (x,t) \in Q.$$
\smallskip

We estimate $v(x,t)$ and $w(x,t)$ separately. We first obtain
\begin{mthm} \label{mthmv} Assume that $v$ is bounded in $Q$, then for any $\alpha \in (0,1)$,
$$
\|v\|_{C_{x,t}^{2, \alpha} (\tilde{Q})} \leq C \|v\|_{L^\infty (Q)}.
$$
\end{mthm}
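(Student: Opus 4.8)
Proof proposal.

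The plan is to work directly from the integral representation \eqref{inteq1}, using crucially that $v(x,t)=\int_{-\infty}^t\int_{\mathbb{R}^n}f_{Q^c}(y,\tau)\,G(x-y,t-\tau)\,dy\,d\tau$ with $f_{Q^c}\ge 0$ supported in $Q^c$. For $(x,t)\in\tilde Q=Q_1(0,0)$ the value $v(x,t)$ only feels the source on $\{(y,\tau)\in Q^c:\tau<t\}$, which is \emph{separated} from $\tilde Q$; I split it as $\mathcal D_1\cup\mathcal D_2$, where $\mathcal D_1=\{|y|\ge 2\}$ (so $|x-y|\ge 1$ there) and $\mathcal D_2=\{|y|<2,\ \tau\le -4\}$ (so $|x-y|\le 3$ and $t-\tau\ge 3$ there), and accordingly write $v=v_1+v_2$ with $v_1,v_2\ge 0$ and $v_i\le v$. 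In both pieces the kernel is smooth in $(x,t)$ on the relevant region, so one may differentiate under the integral sign; the whole game is to bound these derivatives by $\|v\|_{L^\infty(Q)}$ rather than by an uncontrolled norm of $f$.

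For $v_2$ this is routine: on $\mathcal D_2$ the ratio $|x-y|^2/(t-\tau)$ is at most $3$ and $t-\tau\ge 1$, so for every multi-index $\beta$ and every $j\in\mathbb{N}$ one has the pointwise bound $|\partial_x^\beta\partial_t^j G(x-y,t-\tau)|\le C_{\beta,j}\,G(x-y,t-\tau)$ (the quotient is a polynomial in $(t-\tau)^{-1}$ and $(x-y)/(t-\tau)$, uniformly bounded there). Integrating against $f_{Q^c}\ge 0$ gives $|\partial_x^\beta\partial_t^j v_2(x,t)|\le C_{\beta,j}\,v_2(x,t)\le C_{\beta,j}\,\|v\|_{L^\infty(Q)}$.

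The heart of the matter is $v_1$, and this is where the \emph{directional perturbation average} of Lemma \ref{key0} is needed. The claim is a kernel estimate of the form: for $|\beta|\le 2$, $j\le 1$, $(x,t)\in\tilde Q$ and $(y,\tau)$ with $|y|\ge 2$, $\tau<t$,
\[
\bigl|\partial_x^\beta\partial_t^j G(x-y,t-\tau)\bigr|\ \le\ C\,\frac{1}{|B_2|}\int_{B_2(0)}G\!\bigl(x_0-y,\,3-\tau\bigr)\,dx_0 .
\]
To prove it, put $\sigma=t-\tau$, use the standard Gaussian bound $|\partial_z^\beta\partial_\sigma^j G(z,\sigma)|\le C_{\beta,j}\,\sigma^{-(n/2+1-s+|\beta|/2+j)}(1+|z|/\sqrt\sigma)^{|\beta|+2j}e^{-|z|^2/(4\sigma)}$ on the left, and on the right discard all of $B_2(0)$ except a small ball $B_\rho(c\hat y)$, $\hat y=y/|y|$, aligned toward the source and contained in $B_2(0)$, on which $G(x_0-y,3-\tau)$ stays comparable — up to a fixed power of $\sigma$ — to $G(c\hat y-y,3-\tau)$. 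A short case analysis in $\sigma$ then closes the estimate: when $\sigma$ is small, $e^{-|x-y|^2/(4\sigma)}\le e^{-1/(4\sigma)}$ (since $|x-y|\ge 1$) absorbs every negative power of $\sigma$ on the left; when $\sigma$ is large the decisive gain is that the reference time satisfies $3-\tau=(t-\tau)+(3-t)\ge\sigma+2>\sigma$, so $e^{-|x_0-y|^2/(4(3-\tau))}$ decays strictly slower in $|y|$ than $e^{-|x-y|^2/(4\sigma)}$, which more than defeats all the polynomial factors (of order $|y|^{|\beta|+(n+1)/2}$) that appear. Granting this, integrate against $f_{Q^c}\ge 0$ and apply Tonelli to get
\[
\bigl|\partial_x^\beta\partial_t^j v_1(x,t)\bigr|\ \le\ \frac{C}{|B_2|}\int_{B_2(0)}\!\Bigl(\int_{-\infty}^{3}\!\!\int_{\mathbb{R}^n}f_{Q^c}(y,\tau)\,G(x_0-y,3-\tau)\,dy\,d\tau\Bigr)dx_0\ =\ \frac{C}{|B_2|}\int_{B_2(0)}v(x_0,3)\,dx_0\ \le\ C\,\|v\|_{L^\infty(Q)},
\]
since $(x_0,3)\in Q$. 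Adding this to the $v_2$ bound, the functions $v,\nabla_x v,\nabla_x^2 v$ and their first time derivatives are all bounded on $\tilde Q$ by $C\|v\|_{L^\infty(Q)}$; in particular each $t\mapsto\partial_x^\beta v(x,t)$ with $|\beta|\le 2$ is Lipschitz, hence $C^\alpha$ for every $\alpha\in(0,1)$, which is exactly $\|v\|_{C_{x,t}^{2,\alpha}(\tilde Q)}\le C\|v\|_{L^\infty(Q)}$. (If one wants the statement on $Q_1$ with data only on $Q_2$, run the argument first on $Q_{3/2}$ and use $Q_1\subset\subset Q_{3/2}\subset\subset Q_2$.)

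The one genuinely delicate point — and the one I expect to be the main obstacle — is the kernel estimate of the third paragraph in the regime where the source $(y,\tau)$ is far from $\tilde Q$ in \emph{both} space and past time (large $|y|$ and very negative $\tau$ simultaneously): there the comparison of $|\partial_x^\beta\partial_t^j G(x-y,t-\tau)|$ with $G$ evaluated at any single fixed reference point genuinely fails, because the polynomial prefactor from the differentiation cannot be dominated without a Gaussian that decays strictly slower; it is precisely the averaging over the fixed ball $B_2(0)$, which for each source direction $\hat y$ automatically contains the correctly perturbed sub-ball aligned toward $y$, that produces such a reference Gaussian and rescues the estimate.
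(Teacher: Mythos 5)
Your proposal is correct in substance, and its engine is the same as the paper's, but it is packaged differently. The paper's Lemma \ref{key0} (proved inside Lemma \ref{thm2.1.1}) compares $|D^\beta_x G(x-y,t-\tau)|$ with the kernel at the \emph{same} time lag, evaluated at one of $2^n$ fixed perturbed points $x^j$ on the quadrant diagonals; the law of cosines gives $|y-x|^2\ge|y-x^j|^2+\tfrac1n|y-x|$, and the spare factor $e^{-c|y-x|/(t-\tau)}$ absorbs the polynomial prefactors, yielding $|Dv(x,t)|\le C\sum_j v(x^j,t)$. You instead compare with the average of the kernel over all of $B_2(0)$ at the fixed later time $3$: for each source you keep only the sub-ball aligned with $\hat y$, on which $|x_0-y|\le|x-y|-\tfrac14$, and you additionally use $3-\tau\ge(t-\tau)+2$, arriving at $|Dv_1(x,t)|\le C|B_2|^{-1}\int_{B_2}v(x_0,3)\,dx_0$, which plays exactly the role of the paper's $C\sum_j v(x^j,t)$. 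The mechanism is identical (shift the reference point a definite distance toward the source, gain an exponential factor, use $X^pe^{-cX}\le C$); your continuum average avoids the quadrant bookkeeping and lets the slower-decaying reference Gaussian share the work, while the paper's finitely many perturbation points give the cleaner reusable pointwise inequality \eqref{key1}. Your treatment of the temporally remote part $\mathcal D_2$ coincides with the paper's $y\in B_2(0),\ \tau\le0$ case, and your conclusion (bounds on $v,\nabla_xv,D^2_xv$ and their first $t$-derivatives, hence Lipschitz and so $C^\alpha$ in $t$) is, if anything, a little stronger than what the paper records; both arguments use $f_{Q^c}\ge0$ essentially.

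Two points in your sketch need more care, though neither is fatal. First, when differentiating $v_1$ in $t$ the domain $\{\tau<t\}$ moves with $t$; the paper spends the $J_2$ term of Lemma \ref{thm2.1.2} showing this contribution vanishes. In your setting it is harmless because $G$ and all its derivatives vanish to infinite order as $\tau\to t^-$ when $|x-y|\ge1$, but it should be said, and differentiation under the integral sign should be justified by dominated convergence with your averaged kernel bound as the dominating function. Second, in the small-$\sigma$ regime the phrase ``$e^{-1/(4\sigma)}$ absorbs every negative power of $\sigma$'' is not by itself enough when $|y|$ is large, since the right-hand side Gaussian $e^{-|x_0-y|^2/(4(3-\tau))}$ is then itself exponentially small with $3-\tau$ bounded; one must also use $|x_0-y|\le|x-y|$ on the aligned sub-ball together with $\sigma\le3-\tau$, which leaves the spare factor $e^{-c|x-y|^2/\sigma}$ that kills both the powers of $\sigma$ and the prefactor $(1+|x-y|/\sqrt\sigma)^{|\beta|+2j}$. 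Likewise, the claim that $G(x_0-y,3-\tau)$ is ``comparable up to a fixed power of $\sigma$'' across the sub-ball is neither uniformly true in $|y|$ nor needed: bounding the average below by the worst point of the sub-ball suffices.
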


\begin{mrem}
Using similar arguments as in our proof (see Section 3), it can be shown that the $C_{x,t}^{2, \alpha}$ norm on the left hand side can be replaced by $C_{x,t}^{k, \alpha}$ norm for any integer $k$. Nonetheless, this $C_{x,t}^{2, \alpha}$ norm is sufficient for our blow-up and rescaling analysis.
\end{mrem}

\leftline{\bf New ideas involved in the proof}
\smallskip

To estimate the derivatives of $v(x,t)$, say $v_{x_i}$,  in terms of $v(x,t)$ itself, we encounter a significant challenge.
From the definition
$$v(x,t) = C_{n,s}\int_{-\infty}^t \int_{\mathbb{R}^n}  \frac{f_{Q^c}(y,\tau)}{(t-\tau)^{n/2+1-s}} e^{-\frac{|x-y|^2}{4(t-\tau)}} dy d\tau, $$
it follows that
$$v_{x_i}(x,t)=C_{n,s}\int_{-\infty}^t \int_{\mathbb{R}^n}  \frac{f_{Q^c}(y,\tau)}{(t-\tau)^{n/2+1-s}} \frac{x_i-y_i}{2(t-\tau)}e^{-\frac{|x-y|^2}{4(t-\tau)}} dy d\tau.$$

Comparing these two expressions, it appears that, in order to control $v_{x_i}$ in terms of $v$, we need to establish the inequality
\begin{equation}\label{est-fuc1}\frac{x_i-y_i}{2(t-\tau)}e^{-\frac{|x-y|^2}{4(t-\tau)}}\leq Ce^{-\frac{|x-y|^2}{4(t-\tau)}}.\end{equation}
However, this seems impossible because the extra term $\frac{x_i-y_i}{2(t-\tau)}$ on the left hand side is clearly unbounded. To overcome this difficulty, we introduce an innovative idea. Instead of attempting to control $|v_{x_i}(x,t)|$ directly in terms of $v(x,t)$ at the same point $(x,t)$, we find finitely many nearby points $(x^j,t)$ to accomplish this task:
$$ |v_{x_i}(x,t)| \leq C \sum_j^{2^n} v(x^j, t).$$

To illustrate this approach, let us consider the case of two dimensions ($n=2$).

For each fixed  $x\in B_1(0)$, we take $x$ as the center and divide the plane $\R^2$ into four quadrants $I_1, I_2, I_3, I_4.$
On each quadrant $I_j$, choose the point $x^j$ as the intersection of $\partial B_{1/\sqrt{2}}(x)$ and the diagonal of $I_j, \,  j=1, \cdots, 4$:
$$ x^j = x + \eta_j, \mbox{ with } \eta_1 = \frac{1}{\sqrt{2}}(\cos \frac{\pi}{4}, \sin \frac{\pi}{4}), \eta_2 = \frac{1}{\sqrt{2}} (\cos \frac{3\pi}{4}, \sin \frac{3\pi}{4}), \cdots.$$
We will verify that for all $y\in B_2^c(0)\cap I_j$ it holds
$$|y-x|^2 \geq |y-x^j|^2 + {\bf \frac{1}{2} |y-x|}.$$

This perturbation produced a term ${\bf  e^{\frac{c|x-y|}{t-\tau}}}$ on the right hand side, which can be used exactly to control the extra term ${\bf \frac{|x-y|}{t-\tau}}$ on the left hand side  of \eqref{est-fuc1}.

 Using this inequality, we are able to control the left hand side of \eqref{est-fuc1} by the right hand side at point $x^j$:
 $$
\frac{|y-x|}{t-\tau} e^{-\frac{|y-x|^2}{4(t-\tau)}} \leq C e^{-\frac{|y-x^j|^2}{4(t-\tau)}}, \;\; \mbox{ for all }  y \in B_2^c (0) \cap I_j.
$$

Consequently
for all $(x, t) \in \tilde{Q}$, we obtain
$$|v_{x_i} (x, t)| \leq C\sum_j^4  v(x^j,t) \leq C \|v\|_{L^\infty (Q)}. $$

For further details, please refer to Figure 1 and arguments in Section 3.

The above approach can be interpreted as a {\em directional perturbation average}. At a given point $x$, it is impossible to control the derivative of $v$ solely by the value of $v$ itself. However, after making directional perturbations from $x$ to $x^j = x + \eta_j$ for $ j=1,2, \cdots, 2^n$ along directions $\eta_j$, and then taking the average, we are able to realize such a control. Such an estimate is essentially performed on the fully fractional heat kernel $G(x-y, t-\tau)$. For convenience of future applications, we summarized this as
follows:
\begin{mlem}[ Fully  fractional heat kernel estimates ]\label{key0} In each quadrant $I_j$ of $R^n$, there is a vector $\eta_j\in\partial B_{\frac{1}{\sqrt{n}}}(0)$ along the diagonal direction such that
\begin{equation}\label{key1} G(y,\tau)\leq {\bf{e^{-\frac{|y|}{n\tau}}}}\sum\limits_{j=1}^{2^n}G(y+{\eta}_j,\tau),  \forall \tau>0, y\in B_1^c(0).\end{equation}
Furthermore,   for each $\tau>0, y\in B_1^c(0)$, it holds
\begin{equation} D_y^\alpha  G(y,\tau)\leq C\sum\limits_{j=1}^{2^n}G(y+{\eta}_j,\tau), \forall \, |\alpha|=1 \, or \, 2,\end{equation}
and
 \begin{equation}  \partial_\tau G(y,\tau)\leq C\sum\limits_{j=1}^{2^n}G(y+{\eta}_j,\tau),\end{equation}
here $C=C(n)$ is a universal constant.
\end{mlem}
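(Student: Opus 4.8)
The plan is to work directly with the explicit kernel $G(y,\tau)=C_{n,s}\,\tau^{-(n/2+1-s)}e^{-|y|^2/(4\tau)}$ and reduce every claimed inequality to an elementary pointwise estimate on the Gaussian exponents, uniformly in $\tau>0$. The key geometric observation is this: for $y$ in the $j$-th closed quadrant $I_j$ of $\R^n$ (the orthant where each coordinate has a fixed sign) and $\eta_j\in\partial B_{1/\sqrt n}(0)$ the diagonal unit-normalized vector pointing into that same orthant, the inner product $y\cdot\eta_j=\tfrac{1}{\sqrt n}\sum_k |y_k| = \tfrac{1}{\sqrt n}\|y\|_1 \ge \tfrac{1}{\sqrt n}|y|$ (using $\|y\|_1\ge|y|_2$). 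Hence
$$ |y+\eta_j|^2 = |y|^2 + 2\,y\cdot\eta_j + \tfrac1n \le |y|^2 + \tfrac2n + 2\cdot\tfrac1n\,? $$
— wait, the sign: I actually want to compare $|y|^2$ with $|y-\eta_j|^2$ or, reading the statement, with $|y+\eta_j|^2$ where $\eta_j$ points \emph{against} $y$. So I will instead take $\eta_j$ to be the diagonal vector pointing into the orthant \emph{opposite} to $I_j$, i.e. $\eta_j\cdot\mathrm{sgn}(y_k)=-1/\sqrt n$ for $y\in I_j$; then $y\cdot\eta_j=-\tfrac{1}{\sqrt n}\|y\|_1\le -\tfrac{1}{\sqrt n}|y|$, so
$$ |y+\eta_j|^2 = |y|^2 + 2y\cdot\eta_j + \tfrac1n \le |y|^2 - \tfrac{2}{\sqrt n}|y| + \tfrac1n. $$
For $|y|\ge 1$ this gives $|y+\eta_j|^2 \le |y|^2 - \tfrac{1}{n}|y|$ after absorbing the constant $\tfrac1n$ into the linear term (since $\tfrac1n \le \tfrac{1}{\sqrt n}|y|$ when $|y|\ge 1$, leaving a clean $-\tfrac{1}{\sqrt n}|y|+\tfrac1n\ge$ hmm, one checks $-\tfrac{2}{\sqrt n}|y|+\tfrac1n\le -\tfrac1n|y|$ for $|y|\ge1$ since that rearranges to $(\tfrac2{\sqrt n}-\tfrac1n)|y|\ge\tfrac1n$, true). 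Exponentiating,
$$ e^{-|y|^2/(4\tau)} \le e^{-|y|/(n\tau)}\, e^{-|y+\eta_j|^2/(4\tau)}, $$
and since the $\tau$-prefactors of $G(y,\tau)$ and $G(y+\eta_j,\tau)$ are identical, this already yields \eqref{key1} with a single term on the right (the sum over $j$ only makes the bound weaker, and is retained so the same $\eta_j$'s serve all quadrants).

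For the derivative estimates I would differentiate $G$ explicitly: $D_y^\alpha G(y,\tau)$ is $G(y,\tau)$ times a polynomial in the variables $(x_i-y_i)/(t-\tau)$ — concretely $|\nabla_y G|\le \tfrac{|y|}{2\tau}G(y,\tau)$, and $|D^2_y G|\lesssim\big(\tfrac{|y|^2}{\tau^2}+\tfrac1\tau\big)G(y,\tau)$, while $\partial_\tau G$ likewise contributes a factor $\lesssim \tfrac{|y|^2}{\tau^2}+\tfrac1\tau$. So in every case it suffices to show that each such factor times $e^{-|y|^2/(4\tau)}$ is bounded by $C\,e^{-|y+\eta_j|^2/(4\tau)}$, i.e. by $C\,e^{-|y|/(n\tau)}e^{-|y|^2/(4\tau)}\cdot e^{+|y|/(n\tau)}$ — more directly, it suffices that $P(|y|/\tau)\,e^{-|y|^2/(4\tau)}\le C e^{-|y+\eta_j|^2/(4\tau)}$ for the relevant polynomials $P$. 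Using the orthant estimate in the sharper form $|y+\eta_j|^2\le |y|^2-\tfrac{2}{\sqrt n}|y|+\tfrac1n$, it is enough to show
$$ P\!\Big(\tfrac{|y|}{\tau}\Big)\le C\,\exp\!\Big(\tfrac{|y|}{2\sqrt n\,\tau}\Big)\exp\!\Big(-\tfrac{1}{4n\tau}\Big)\cdot(\text{harmless}), $$
which follows from the elementary fact that $r^m e^{-cr}\le C_{m,c}$ for $r=|y|/\tau\ge 0$ together with the complementary regime where $\tau$ is large and $|y|/\tau$ small (there $P$ is bounded and $e^{-|y+\eta_j|^2/(4\tau)}$ is bounded below on $|y|\le$ fixed range... actually for $|y|\ge1$ and $\tau$ large one needs $e^{-|y+\eta_j|^2/(4\tau)}\ge$ const, which may fail if $|y|$ is huge — but then $|y|/\tau$ could still be moderate; the correct split is by the size of $|y|^2/\tau$, handling $|y|^2/\tau\le 1$ and $|y|^2/\tau\ge1$ separately). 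I will carry out this two-regime split carefully: when $|y|^2/\tau$ is bounded, the polynomial factor is bounded and $|y+\eta_j|^2/\tau$ is comparable to $|y|^2/\tau$, hence bounded, so $G(y+\eta_j,\tau)$ is comparable to $G(y,\tau)$; when $|y|^2/\tau$ is large, the gained exponential $e^{-c|y|^2/\tau}$ (from the gap $|y|^2-|y+\eta_j|^2\ge\tfrac1n|y|\ge\tfrac1n$ and, more usefully, $\ge\tfrac{1}{\sqrt n}|y|$ which dominates once $|y|$ is large) kills any polynomial in $|y|/\tau$.

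The main obstacle I anticipate is not the large-$|y|$ asymptotics — those are routine — but handling the \emph{small-$\tau$} (equivalently large $|y|/\tau$) behavior of $\partial_\tau G$ and $D^2_y G$ uniformly, because there the prefactor $1/\tau$ blows up and must be controlled purely by the Gaussian; this is exactly where the quantitative gain $|y|^2-|y+\eta_j|^2\ge \tfrac{1}{\sqrt n}|y|\ge \tfrac{1}{\sqrt n}$ (valid since $y\in B_1^c(0)$) is essential, giving a fixed exponential margin $e^{-\cdot/\tau}$ that absorbs any negative power of $\tau$. I would isolate this as a one-line sublemma: for every $m\ge 0$ and $\delta>0$ there is $C=C(m,\delta)$ with $\tau^{-m}e^{-\delta/\tau}\le C$ for all $\tau>0$, and for every $m\ge0$, $c>0$ there is $C$ with $(|y|/\tau)^m e^{-c|y|/\tau}\le C$. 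Granting these, each of the three displayed inequalities reduces to assembling the orthant bound on exponents with the sublemma, and the constant $C(n)$ comes out explicitly (depending on $n$ through $\tfrac{1}{\sqrt n}$ and the number $2^n$ of orthants). With the geometry pinned down as above, the remaining work is bookkeeping, so I would present the orthant inequality as the crux and then dispatch the three estimates in sequence.
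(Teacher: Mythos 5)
Your proposal is correct and is essentially the paper's own argument: the crux in both is the orthant/diagonal estimate $|y|^2-|y+\eta_j|^2\geq \frac{1}{n}|y|$ for $y$ in the quadrant matched to $\eta_j$ with $|y|\geq 1$ (the paper obtains it via the law of cosines with $\cos\theta\geq 1/\sqrt{n}$, you via expanding the square and $\|y\|_1\geq |y|$), after which all three displayed bounds follow by absorbing the polynomial factors $(|y|/\tau)^m$ and $\tau^{-m}$ into the gained exponential, exactly as in the paper's inequalities \eqref{est key}--\eqref{key est2}. The only blemishes are inconsequential: with $|\eta_j|=1/\sqrt{n}$ one has $y\cdot\eta_j=-\frac{1}{n}\|y\|_1$ rather than $-\frac{1}{\sqrt{n}}\|y\|_1$ (your final bound $|y+\eta_j|^2\leq |y|^2-\frac{1}{n}|y|$ still holds), and the honest exponential weight in \eqref{key1} is $e^{-|y|/(4n\tau)}$ rather than $e^{-|y|/(n\tau)}$ -- a dropped factor of $4$ that the paper's own derivation glosses over in the same way.
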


 This lemma will be validated in the proof of Lemma \ref{thm2.1.1}.
 \smallskip

 We believe that this important heat kernel estimate will become a powerful tool in establishing regularity estimates, with potential applications to a variety of related problems.
\medskip

To derive H\"{o}lder estimates for $w(x,t)$, we assume that $f \in L^\infty (Q)$, with the results depending on whether $2s=1$ or not.
In the case $2s=1$, $w$ belongs to the ``Log-Lipshitz'' space, which is defined as follows:
Let $g(x)$ be a function of $x$. We say that $g$ belongs to the ``Log-Lipshitz" space
 $C^{\log L}(\Omega)$ if
\begin{eqnarray*}
\|g\|_{C^{\log L}(\Omega)}=\|g\|_{L^\infty (\Omega)}+ \sup_{x, y\in \Omega, x\neq y} \frac{|g(x)-g(y)|}{|x-y|  |\log \min\{|x-\bar{x}|, 1/2\}|}<\infty.
\end{eqnarray*}
Similarly, one can define $C^{k + \log L}$ for positive integers $k$.

\begin{mthm} \label{mthmw1}  Assume that $f$ is bounded in $Q$. Then there exists a positive constant $C$, such that
$$
\|w\|_{C^{2s, s}_{x, t} (\tilde{Q})} \leq C \|f\|_{L^\infty (Q)}, \;\; \mbox{ if } s \neq 1/2.
$$
$$
\|w\|_{C^{log L, s}_{x, t} (\tilde{Q})} \leq C \|f\|_{L^\infty (Q)}, \;\; \mbox{ if } s = 1/2.
$$
\end{mthm}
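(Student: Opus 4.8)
The plan is to estimate $w$ and its derivatives directly from the defining integral
$$w(x,t) = \int_{-\infty}^t\!\!\int_{\R^n} f_Q(y,\tau)\,G(x-y,t-\tau)\,dy\,d\tau ,$$
exploiting the parabolic self-similarity of the kernel: after the substitution $\sigma = t-\tau$, $z = \sqrt{\sigma}\,\zeta$ one has $G(z,\sigma) = \sigma^{\,s-1-n/2}\,\Phi(z/\sqrt{\sigma})$ with the fixed Gaussian profile $\Phi(\zeta) = C_{n,s}e^{-|\zeta|^2/4}$. Since $f_Q$ is bounded and supported in $Q = Q_2(0,0)$, for $(x,t)\in\tilde Q$ the $\sigma$-integral runs only over the bounded interval $(0,5)$, so the whole argument rests on the rescaling-immediate bounds
$$\int_{\R^n}|D_z^\alpha G(z,\sigma)|\,dz = c_{n,\alpha}\,\sigma^{\,s-1-|\alpha|/2},\qquad \int_{\R^n}|\partial_\sigma G(z,\sigma)|\,dz \le c_n\,\sigma^{\,s-2},$$
together with the difference estimates derived from them. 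In contrast to the treatment of the homogeneous part $v$, neither positivity of $f$ nor the directional-average Lemma \ref{key0} is needed here: only $\|f_Q\|_{L^\infty} = \|f\|_{L^\infty(Q)}$ and the triangle inequality enter. Boundedness is then immediate: $\|w\|_{L^\infty(\tilde Q)} \le \|f\|_{L^\infty(Q)}\int_0^5 c_n\sigma^{s-1}\,d\sigma < \infty$ because $s>0$.

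For the spatial regularity I would split into three cases according to the position of $2s$ relative to $1$. The basic input is that, for a unit vector $e$ and $\rho > 0$, $\int_{\R^n}|\Phi(\zeta+\rho e) - \Phi(\zeta)|\,d\zeta \le C\min\{1,\rho\}$, with the same bound for $\nabla\Phi$ in place of $\Phi$; rescaling yields
$$\int_{\R^n}|G(z+h,\sigma) - G(z,\sigma)|\,dz \le C\,\sigma^{s-1}\min\{1,|h|\sigma^{-1/2}\},$$
and the analogue with $G$ replaced by $\nabla_z G$ and the power $s-1$ replaced by $s-3/2$. When $2s < 1$, estimate $|w(x+h,t) - w(x,t)|$ directly by $\|f\|_{L^\infty(Q)}\int_0^5 \sigma^{s-1}\min\{1,|h|\sigma^{-1/2}\}\,d\sigma$; splitting at $\sigma = |h|^2$ gives the bound $C|h|^{2s}\|f\|_{L^\infty(Q)}$ (the case $|h|$ not small being trivial from boundedness). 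When $2s > 1$, the first $L^1$-bound above shows $\nabla_x w$ is bounded — the integral $\int_0^5\sigma^{s-3/2}\,d\sigma$ converges precisely because $s > 1/2$ — and the same splitting applied to $|\nabla_x w(x+h,t) - \nabla_x w(x,t)|$ with the $\nabla_z G$-difference bound gives $[\nabla_x w]_{C^{2s-1}_x(\tilde Q)} \le C\|f\|_{L^\infty(Q)}$, i.e.\ $w\in C^{2s}_x$. In the borderline case $2s = 1$, the second piece of the split becomes $\int_{|h|^2}^5 |h|\,\sigma^{-1}\,d\sigma = |h|\log(5/|h|^2)$, which is comparable to the Log-Lipschitz modulus $|h|\,\bigl|\log|h|\bigr|$ entering the definition of $C^{\log L}$.

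For the time regularity — which, notably, costs no logarithm and works for every $s\in(0,1)$ — take $t_1 < t_2$, set $k = t_2 - t_1$, and write
$$w(x,t_2) - w(x,t_1) = \int_{t_1}^{t_2}\!\!\int_{\R^n} f_Q\,G(x-y,t_2-\tau)\,dy\,d\tau + \int_{-\infty}^{t_1}\!\!\int_{\R^n} f_Q\bigl[G(x-y,t_2-\tau) - G(x-y,t_1-\tau)\bigr]\,dy\,d\tau .$$
The first (\emph{new-mass}) term is at most $\|f\|_{L^\infty(Q)}\int_0^k c_n\sigma^{s-1}\,d\sigma = C\|f\|_{L^\infty(Q)}\,k^s$. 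For the second, the bound $\int_{\R^n}|\partial_\sigma G(z,\sigma)|\,dz \le c_n\sigma^{s-2}$ and the fundamental theorem of calculus give $\int_{\R^n}|G(z,\sigma+k) - G(z,\sigma)|\,dz \le C\min\{\sigma^{s-1}, k\sigma^{s-2}\}$; integrating over $\sigma\in(0,5)$ and splitting at $\sigma = k$ again produces $C\,k^s$. Hence $[w]_{C^s_t(\tilde Q)} \le C\|f\|_{L^\infty(Q)}$, and when $2s>1$ the identical computation with $\nabla_z G$ in place of $G$ yields $[\nabla_x w]_{C^{(2s-1)/2}_t(\tilde Q)} \le C\|f\|_{L^\infty(Q)}$, covering the mixed part of the parabolic norm. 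Collecting the boundedness, spatial, and time estimates gives the theorem.

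The step I expect to be the main obstacle is establishing the $\sigma$-uniform difference bounds $\int_{\R^n}|G(z+h,\sigma) - G(z,\sigma)|\,dz \lesssim \sigma^{s-1}\min\{1,|h|\sigma^{-1/2}\}$ and its first-derivative analogue with the correct power of $\sigma$: although these reduce after rescaling to the fixed statement $\int_{\R^n}|\Phi(\cdot+\rho e) - \Phi|\,d\zeta \lesssim \min\{1,\rho\}$, the mean-value argument in the regime $\rho < 1$ must be carried out carefully — one separates $|\zeta| \lesssim \rho$ from $|\zeta| \gtrsim \rho$ so that the Gaussian evaluated along the segment joining $\zeta$ and $\zeta+\rho e$ stays comparable to the one at the endpoint — and one must keep track of the fact that the exponents $s-3/2$ and $s-1$ arising under the $|h|\sigma^{-1/2}$ factor are integrable near $\sigma=0$ exactly when $s\neq 1/2$, which is what forces the dichotomy in the statement. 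Everything else is bookkeeping: checking that all $\sigma$-integrals range over a bounded interval because $f$ is compactly supported in $Q$, and that the split points $\sigma=|h|^2$ and $\sigma=k$ deliver the advertised Hölder exponents.
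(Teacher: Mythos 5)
Your proposal is correct, and the estimates it rests on are sound: the scaling identities $\int_{\R^n}G(z,\sigma)\,dz=c\,\sigma^{s-1}$, $\int_{\R^n}|\nabla_z G|\,dz=c\,\sigma^{s-3/2}$, $\int_{\R^n}|\partial_\sigma G|\,dz\le c\,\sigma^{s-2}$, the $\min$-type difference bounds, and the splits at $\sigma=|h|^2$ and $\sigma=k$ do deliver exactly the exponents $2s$, $2s-1$, $s$, $s-\frac12$ and the logarithm at $s=\frac12$. But the route is genuinely different from the paper's. The paper integrates out the \emph{time} variable first, via the substitution $\hat\tau=|x-y|^2/4(t-\tau)$, which turns $\int_0^t G\,d\tau$ into a Riesz-type kernel $|x-y|^{2s-n}$; it then decomposes \emph{space} into a small ball $B_\delta(\eta)$ about the midpoint $\eta$ of $x,\bar x$ (each term estimated separately) and the annulus $B_3(\eta)\setminus B_\delta(\eta)$ (Mean Value Theorem, using $|\xi-y|\sim|\eta-y|$ there), with a separate time-splitting argument for the $t$-modulus. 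You do the opposite: integrate out \emph{space} exactly by the $L^1$ scaling of $G$ and its derivatives, and split the resulting one-dimensional $\sigma$-integral. What your version buys: it treats the $x$- and $t$-estimates in one uniform scheme, it does not need the restriction $n\ge 2$ that the paper invokes to bound the incomplete Gamma factor in its near-ball estimate, and it explicitly supplies the mixed modulus $[\nabla_x w]_{C^{(2s-1)/2}_t}\le C\|f\|_{L^\infty(Q)}$ that Definition (ii) of the parabolic space requires when $s>\frac12$ and which the paper's Lemmas in Section 4 do not state separately. What the paper's decomposition buys is reusability: the midpoint/annulus splitting together with the symmetry cancellations is exactly the template it upgrades to the Schauder estimates (Theorem \ref{mthmw2}), where your pure $L^1$-kernel bounds would no longer suffice by themselves. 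One small remark: the step you flag as the main obstacle is in fact immediate --- by the fundamental theorem of calculus and Fubini, $\int_{\R^n}|\Phi(\zeta+\rho e)-\Phi(\zeta)|\,d\zeta\le\rho\,\|\nabla\Phi\|_{L^1(\R^n)}$ with no case distinction, since the $L^1$ norm of $\nabla\Phi$ is translation invariant; combined with the trivial bound $2\|\Phi\|_{L^1}$ this gives $\min\{1,\rho\}$ at once, and the only place where $s\ne\frac12$ genuinely matters is, as you say, the integrability of $\sigma^{s-3/2}$ at $\sigma=0$. You should also say a word justifying differentiation under the integral sign for $\nabla_x w$ when $s>\frac12$ (dominated convergence with the $\sigma^{s-3/2}$ majorant), but that is routine.
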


\begin{mthm} \label{mthmw2} Assume that $f \in C^{\alpha, \alpha/2}_{x,t}  (Q)$ for some $0 < \alpha < 1$. Then there exists a positive constant $C$, such that
$$
\|w\|_{C^{2s+ \alpha, s+\alpha/2}_{x, t} (\tilde{Q})} \leq C \|f\|_{C^{\alpha, \alpha/2}_{x,t}  (Q)}, \; \; \; \mbox{ if } \; 2s +\alpha < 1 \mbox{ or } 1< 2s +\alpha < 2;
$$
$$
\|w\|_{C^{\log L, s+\alpha/2}_{x, t} (\tilde{Q})} \leq C \|f\|_{C^{\alpha, \alpha/2}_{x,t}  (Q)}, \; \; \; \mbox{ if } \; 2s + \alpha = 1;
$$
$$
\|w\|_{C^{1+\log L, \log L}_{x, t} (\tilde{Q})} \leq C \|f\|_{C^{\alpha, \alpha/2}_{x,t}  (Q)}, \;\;\; \mbox{ if } \; 2s + \alpha = 2.
$$
\end{mthm}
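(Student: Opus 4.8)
The plan is to prove the Schauder estimate for $w$ by working directly with the integral representation
$$ w(x,t) = \int_{-\infty}^t \int_{\mathbb{R}^n} f_Q(y,\tau)\, G(x-y,t-\tau)\, dy\, d\tau, $$
and reducing the general case to the already-established $L^\infty$ bound in Theorem \ref{mthmw1} by a standard localization/subtraction argument. First I would fix a point $(x^o,t_o)\in\tilde Q$ and, for a radius $\rho>0$, split $f_Q$ into its value at $(x^o,t_o)$ times a cutoff plus the oscillation part: write $f_Q = f(x^o,t_o)\chi_{Q_\rho(x^o,t_o)} + [f_Q - f(x^o,t_o)\chi_{Q_\rho(x^o,t_o)}]$. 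The contribution of the constant piece $f(x^o,t_o)\chi_{Q_\rho}$ to $w$ is a constant multiple of $\int G$, which is smooth (indeed solves the homogeneous equation where the cutoff is $1$, up to the truncation), and its $C^{2s+\alpha,s+\alpha/2}$ norm on the appropriate inner cylinder is controlled by $|f(x^o,t_o)|\le\|f\|_{L^\infty(Q)}\le\|f\|_{C^{\alpha,\alpha/2}(Q)}$. The remaining piece has the form of a $w$-type potential with right-hand side of size $O(\rho^\alpha \|f\|_{C^{\alpha,\alpha/2}})$ inside $Q_\rho$, to which Theorem \ref{mthmw1} (the $L^\infty\!\to\!C^{2s,s}$ gain) applies on scale $\rho$, producing the required Hölder seminorm bounds after rescaling and summing a geometric series in the dyadic radii $\rho=2^{-k}$. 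The three regimes ($2s+\alpha<1$ or $1<2s+\alpha<2$; $2s+\alpha=1$; $2s+\alpha=2$) correspond exactly to whether the geometric series in the exponent converges, diverges logarithmically once, or twice — this is where the $C^{\log L}$ and $C^{1+\log L}$ spaces enter.

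More concretely, the key steps in order are: (i) establish pointwise decay and integrability estimates for $G$ and its derivatives $D_x^\alpha G$, $\partial_\tau G$ against the kernel weight, using Lemma \ref{key0} to absorb the unbounded polynomial factors $\frac{x_i-y_i}{t-\tau}$, $\frac{(x_i-y_i)(x_j-y_j)}{(t-\tau)^2}$ into shifts of $G$; (ii) prove the scaling identity: if $w$ solves the equation with right-hand side $f$ on $Q_\rho$, then $\tilde w(x,t)=\rho^{-2s}w(\rho x,\rho^2 t)$ solves it with $\tilde f(x,t)=f(\rho x,\rho^2 t)$ on $Q_1$, so Theorem \ref{mthmw1} transfers to all scales with the correct powers of $\rho$; (iii) carry out the dyadic decomposition around each $(x^o,t_o)\in\tilde Q$, estimating the near-field contribution (dyadic annuli $Q_{2^{-k}}\setminus Q_{2^{-k-1}}$) via the Hölder oscillation $[f]_{C^{\alpha,\alpha/2}}\cdot 2^{-k\alpha}$ and the kernel estimates, and the far-field (outside $Q_\rho$ but inside $Q$) as a smooth error; (iv) sum the series, tracking the exponent $2s+\alpha-(\text{integer part})$ and reading off Hölder, $\log L$, or $1+\log L$ regularity; (v) handle the time regularity $s+\alpha/2$ (or $\log L$ when $2s+\alpha=2$) in parallel, using $\partial_\tau G$ bounds from Lemma \ref{key0} and the parabolic scaling $t\sim x^2$.

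The main obstacle I expect is the bookkeeping of the borderline exponents combined with the genuinely nonlocal truncation error. Unlike the classical heat equation, here the kernel $G$ does not have compact support, so after cutting off $f_Q$ to $Q_\rho$ there is a tail of $f_Q$ on $Q\setminus Q_\rho$ whose potential must be shown to be smooth (or at least $C^{2s+\alpha,s+\alpha/2}$) near $(x^o,t_o)$ with norm controlled by $\|f\|_{C^{\alpha,\alpha/2}(Q)}$ — this requires differentiating $G$ under the integral sign and using that $(x^o,t_o)$ is bounded away from the support, again leaning on the shifted-kernel bounds of Lemma \ref{key0} to control $D_x^\alpha G$ and $\partial_\tau G$ uniformly on the relevant region. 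The second delicate point is that when $2s+\alpha\in\{1,2\}$ the naive estimate gives a divergent sum $\sum_k 2^{k\cdot 0}$ that must be resummed as $\sum_{k\le K}1 \sim K \sim |\log\rho|$, matching $\min\{|x-\bar x|,1/2\}$ in the definition of $C^{\log L}$; care is needed to verify the seminorm is finite with exactly this logarithmic weight and not a worse one. Everything else — the scaling, the geometric summation in the three generic sub-cases, and the separate spatial and temporal Hölder estimates — is routine once Lemma \ref{key0} and Theorem \ref{mthmw1} are in hand.
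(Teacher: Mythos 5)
Your outline takes a genuinely different route from the paper (a dyadic freezing/perturbation scheme rather than direct kernel estimates), but as written its two central claims fail quantitatively. First, the frozen piece: by the parabolic homogeneity $G(\rho z,\rho^2\sigma)=\rho^{-(n+2-2s)}G(z,\sigma)$, the potential of $f(x^o,t_o)\chi_{Q_\rho(x^o,t_o)}$ is $f(x^o,t_o)\,\rho^{2s}P_1\bigl((x-x^o)/\rho,(t-t_o)/\rho^2\bigr)$ for a fixed profile $P_1$, so its $C^{2s+\alpha,s+\alpha/2}$ seminorm on the inner cylinder of size $\rho/2$ scales like $\rho^{-\alpha}$; it is \emph{not} bounded by $|f(x^o,t_o)|$ uniformly in $\rho$, and with that the geometric series you intend to sum collapses. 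Second, the far field $Q\setminus Q_\rho$ lies at distance only $\rho$ from $(x^o,t_o)$, so differentiating under the integral gives bounds blowing up in negative powers of $\rho$, and the oscillation of $f$ there relative to $f(x^o,t_o)$ is of size $(\mathrm{dist})^\alpha$, not $\rho^\alpha$; calling this a ``smooth error'' with norm uniformly controlled by $\|f\|_{C^{\alpha,\alpha/2}(Q)}$ is unjustified. What actually saves such an argument is the cancellation in the kernel \emph{difference}: after the mean value theorem and the substitution $\hat\tau=|x-y|^2/4(t-\tau)$ one gets decay like $|x-\bar x|\,|\xi-y|^{-(n+1-2s)}$, and it is this (or, alternatively, a Campanato-type comparison of approximations across consecutive scales) that produces the extra $\alpha$ gain; neither mechanism appears in your sketch. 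Note also that applying Theorem \ref{mthmw1} ``at scale $\rho$'' is not immediate: after rescaling, the datum is supported in a cylinder of size $\rho^{-1}$, so the unit-scale statement does not apply directly and the rescaled tail must be handled as a homogeneous ($v$-type) part.

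Two further gaps. Lemma \ref{key0} is the wrong tool here: it requires $y\in B_1^c(0)$, i.e.\ sources at unit distance, and is designed for the homogeneous part $v$; the paper's proof of this theorem never uses it. The near-field analysis for $w$ instead rests on the incomplete-Gamma reduction of the time integral (giving weights $|x-y|^{-(n-2s)}$, $|x-y|^{-(n+1-2s)}$) together with exact symmetry identities such as \eqref{3.10a}--\eqref{3.10b}, which allow one to subtract $f_Q(x,\tau)$ inside the integral -- a single-scale implementation of your freezing idea that avoids the dyadic summation altogether. Finally, the hardest range $1<2s+\alpha\le 2$ is where the naive near-field bound genuinely diverges (the integrand $|x-y|^\alpha|\bar x-y|^{-(n+1-2s)}$ over $B_\delta(\eta)$ is not integrable there); the paper resolves this with the decomposition \eqref{3.24} and the divergence-theorem identity reducing $\int_{B_\delta(\eta)}\partial_{x_i}F\,dy$ to a boundary integral, and your proposal offers no substitute for that step. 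Your intuition that the borderline cases $2s+\alpha\in\{1,2\}$ produce exactly one logarithm is consistent with the paper's $\delta|\log\delta|$ bounds, but the core estimates of the proposal are incorrect as stated and the decisive cancellations are missing.
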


Now Theorems \ref{mthm1}, \ref{mthmv}, \ref{mthmw1}, and \ref{mthmw2} imply Theorems \ref{mthmu1} and \ref{mthmu2}.

\begin{mrem}

It is also clear that $\tilde{Q}$ and $Q$ can be replaced by $Q_r (x^o, t_o)$ and $Q_{2r}(x^o, t_o)$ respectively, where these are the parabolic cylinders centered at any point $(x^o, t_o)$ and of any size $r \geq 1$. As long as the sub-cylinder $\tilde{Q}$ is essentially contained within $Q$-- specifically, if
the distance between $\partial \tilde{Q}$ and $\partial Q$ is bounded below by a positive constant, say $\geq 1$-- the constants $C$ in all estimates remain unchanged.
\end{mrem}
\medskip

\leftline{\bf \large Applications in blow-up and rescaling}
\medskip

Among their numerous applications, the regularity estimates presented in the previous subsection can be utilized to establish a priori estimates for solutions to nonlinear master equations via blow-up and re-scaling analysis.

Consider the equation
\begin{equation}
(\partial_t - \Delta)^s u(x,t) = f(x, u(x,t)), \;\; (x,t) \in \mathbb{R}^n \times \mathbb{R}.
\label{1.20}
\end{equation}

\begin{mthm} \label{mthm7}
Assume that $u$ is a nonnegative solution of (\ref{1.20}) and that the following conditions hold:

(f1) $f(x, \tau)$ is uniformly H\"{o}lder continuous with respect to $x$ and continuous  with respect to $\tau$.

 (f2) There exists a constant $C_0 > 0$ such that
 $$0\leq f(x, \tau) \leq C_0(1+\tau^p) \;\;  \mbox{   uniformly for all $x$, and }$$
 $$
 \mathop{\lim}\limits_{\tau \to \infty}\frac{f(x, \tau)}{\tau^p}=K(x), \,\, 1< p < \frac{n+2}{n+2-2s},
 $$
 where $K(x)\in (0, \infty)$ is uniformly continuous and
 $$ \mathop{\lim}\limits_{|x|\to \infty} K(x)=\bar C\in (0, \infty).$$
 Then there exists a constant $C$, such that  for all nonnegative solutions $u$, we have
 $$
 u(x,t) \leq C \,\,\, \forall \, (x,t) \in \mathbb{R}^n \times \mathbb{R}.
 $$

 \end{mthm}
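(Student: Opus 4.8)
The plan is to argue by contradiction via a blow-up and rescaling scheme, using Theorem \ref{mthm1} (the integral representation), Theorems \ref{mthmv}, \ref{mthmw1}, \ref{mthmw2} (the local regularity estimates whose right-hand sides involve only \emph{local} sup-norms) together with a doubling-type selection of blow-up points. Suppose the conclusion fails: then there is a sequence of nonnegative solutions $u_k$ of \eqref{1.20} and points $(x^k,t_k)$ with $M_k:=u_k(x^k,t_k)\to\infty$. Since the $u_k$ need not be bounded, we cannot take $M_k$ to be a global maximum; instead, applying a parabolic doubling lemma (in the spirit of Poláčik--Quittner--Souplet) to the function $u_k^{(p-1)/2}$ with respect to the parabolic distance, I would produce new points $(\bar x^k,\bar t_k)$ with $m_k:=u_k(\bar x^k,\bar t_k)\ge M_k^{1/2}\to\infty$ (say) and a scale $\lambda_k:=m_k^{-(p-1)/2}\to 0$ such that
$$ u_k(x,t)\le 2^{2/(p-1)}\, m_k\equiv C\,m_k \qquad\text{for all }(x,t)\in Q_{\lambda_k R}(\bar x^k,\bar t_k),$$
where $R$ may be taken arbitrarily large (fixed first, then $k\to\infty$). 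The exponent restriction $1<p<\frac{n+2}{n+2-2s}$ is precisely what makes the rescaling of the equation scale-invariant in the limit, so this is where that hypothesis enters.

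Next I would set
$$ v_k(x,t)=\frac{1}{m_k}\,u_k\bigl(\lambda_k x+\bar x^k,\ \lambda_k^2 t+\bar t_k\bigr).$$
Then $0\le v_k\le C$ on $Q_R(0,0)$, $v_k(0,0)=1$, and $v_k$ solves $(\partial_t-\Delta)^s v_k = \tilde f_k(x,t)$ on $Q_R(0,0)$, where, using (f2), $\tilde f_k(x,t)=\lambda_k^{2s}m_k^{-1}f(\lambda_k x+\bar x^k, m_k v_k)$. Because $\lambda_k^{2s}m_k^{-1}=m_k^{-1-s(p-1)}$ and, by (f2), $f(x,\tau)\le C_0(1+\tau^p)$ with $\frac{f(x,\tau)}{\tau^p}\to K(x)$, one checks $\tilde f_k$ is uniformly bounded on $Q_R(0,0)$ (here the precise relation between $\lambda_k$ and $m_k$ is chosen so that $\lambda_k^{2s}m_k^{p-1}=1$, forcing $\lambda_k=m_k^{-(p-1)/(2s)}$; I would adopt this choice of $\lambda_k$ from the outset, which is consistent with the doubling construction). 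Now the key point: apply Theorem \ref{mthmu1} (equivalently the split $v_k=v+w$ with Theorems \ref{mthmv}, \ref{mthmw1}) on $Q_{R/2}(0,0)\subset\subset Q_R(0,0)$ — crucially, the right-hand side of these estimates involves only $\|v_k\|_{L^\infty(Q_R)}$ and $\|\tilde f_k\|_{L^\infty(Q_R)}$, both of which are uniformly bounded. This yields a uniform $C^{2s,s}_{x,t}$ bound on $Q_{R/2}$, hence (via Arzelà--Ascoli and a diagonal argument over $R=1,2,3,\dots$) a subsequence $v_k\to v_\infty$ in $C^{2s-\epsilon,s-\epsilon}_{x,t,\mathrm{loc}}(\mathbb{R}^n\times\mathbb{R})$, with $0\le v_\infty\le C$, $v_\infty(0,0)=1$, so $v_\infty\not\equiv 0$.

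It remains to identify the limiting equation and invoke a Liouville theorem. Since $\bar x^k$ may run off to infinity, I distinguish two cases according to whether $\lambda_k\bar x^k+ \bar x^k$ — more precisely, whether $\mathrm{dist}$ of $\bar x^k$ stays bounded or not; in either case $K(\lambda_k x+\bar x^k)\to \bar K$ for some constant $\bar K\in(0,\infty)$ (equal to $K$ at the limit point, or to $\bar C$ if $|\bar x^k|\to\infty$), uniformly on compact sets by the uniform continuity of $K$ and $\lambda_k\to 0$. Passing to the limit in the integral equation \eqref{inteq1} for $v_k$ (dominated convergence, using the local uniform bounds and the decay of $G$), one gets that $v_\infty$ is a bounded nonnegative entire solution of
$$ (\partial_t-\Delta)^s v_\infty = \bar K\, v_\infty^{\,p}\quad\text{in }\mathbb{R}^n\times\mathbb{R},$$
with $1<p<\frac{n+2}{n+2-2s}$. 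By the known nonexistence (Liouville) theorem for bounded nonnegative solutions of this subcritical fully fractional Fujita-type equation, $v_\infty\equiv 0$, contradicting $v_\infty(0,0)=1$. This completes the proof. The main obstacle I anticipate is twofold: first, making the doubling-lemma selection of $(\bar x^k,\bar t_k,\lambda_k)$ rigorous in the \emph{parabolic} and \emph{nonlocal} setting so that the rescaled functions are genuinely controlled on $Q_{\lambda_k R}$ (this requires the intrinsic parabolic metric and care that $\lambda_k R<$ the doubling radius); and second, justifying the passage to the limit in the representation formula \eqref{inteq1} — one must control the contribution to $v_k(x,t)$ coming from the region outside $Q_R$, where only the a priori bound $u_k\le C m_k$ on $Q_{\lambda_k R}(\bar x^k,\bar t_k)$ (not on all of $\mathbb{R}^n\times\mathbb{R}$) is available, and show, using nonnegativity of $f$ and the decay of $G$ together with the fact that $v_k$ itself is a solution (so $v_k\ge$ its own ``Green potential over $Q_R$''), that this outside contribution does not spoil the limit. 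It is precisely here that the new local regularity estimates (Theorems \ref{mthmu1}--\ref{mthmu2}), which do not require global bounds, are indispensable.
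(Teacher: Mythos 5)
Your overall scheme (contradiction, doubling-type selection of blow-up points, rescaling with $\lambda_k=m_k^{-(p-1)/(2s)}$, uniform local regularity via the new estimates, compactness, limit equation, Liouville theorem of Ferreira--de Pablo \cite{FP}) is the same as the paper's, and most of those steps are carried out correctly. The genuine gap is in the identification of the limiting equation. You propose to pass to the limit in the integral representation \eqref{inteq1} for $v_k$ by dominated convergence, but no dominating function is available: outside the cylinder $Q_{\lambda_k R}(\bar x^k,\bar t_k)$ the solutions $u_k$ are completely uncontrolled, and after rescaling the tail of the Green potential need not vanish --- in fact it can converge to a strictly positive constant. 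This is not a technicality you can wave away by ``nonnegativity of $f$ and decay of $G$''; it is the heart of the difficulty, and you yourself list it as an unresolved ``obstacle.'' The paper handles it in two steps: first it invokes the convergence result of \cite{CGL}, which only gives
$(\partial_t-\Delta)^s v_k \to (\partial_t-\Delta)^s v - b$ pointwise for some nonnegative constant $b$ (the defect produced by the uncontrolled tails), so the limit equation is a priori $(\partial_t-\Delta)^s v = K(\bar x)v^p + b$ as in \eqref{eq4-15}; second, it rules out $b>0$ by writing the truncated Green potential $v_R$ of $K(\bar x)v^p+b$, comparing $v\ge v_R$ via the maximum principle, letting $R\to\infty$, and observing that $b\int_{-\infty}^t\int_{\R^n}G\,dy\,d\tau$ diverges, contradicting the boundedness of $v$. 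Your proposal contains neither the defect-constant mechanism nor a substitute argument, so as written the limit equation is not established.

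Two secondary points. First, to make sense of $(\partial_t-\Delta)^s v_k$ pointwise and of the limit $v$ as a classical solution you need more than the uniform $C^{2s,s}_{x,t}$ bound from Theorem \ref{mthmu1}: the paper bootstraps, using (f1)--(f2) to see that $F_k(x,v_k)$ is uniformly H\"older and then applying Theorem \ref{mthmu2} to get uniform $C^{2s+\varepsilon,s+\varepsilon}_{x,t}$ bounds as in \eqref{eq4-11}; you mention Theorem \ref{mthmu2} only in passing and should use it at this stage. Second, the restriction $1<p<\frac{n+2}{n+2-2s}$ is not what makes the rescaling work (any $p>1$ scales correctly); it enters only at the end, as the hypothesis of the nonexistence theorem in \cite{FP}, and before invoking that theorem one should upgrade $v(0,0)=1$ to $v>0$ via the maximum principle, as the paper does.
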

 The above a priori estimate holds for all $0<s<1$.

 In the case $1/2 <s <1$, we can further establish estimates for more general master equations of the form
 \begin{equation} \label{1.21}
 (\partial_t - \Delta)^s u(x,t) = b(x)|\nabla_x u(x,t)|^q + f(x, u(x,t)), \;\; (x,t) \in \mathbb{R}^n \times \mathbb{R},
 \end{equation}
where $\nabla_x u$ is the gradient of $u$ with respect to $x$.

\begin{mthm} \label{mthm8} Assume that
  $b(x):  \mathbb{R}^n \to \mathbb{R}$  is nonnegative,uniformly bounded and  H\"{o}lder continuous and $f(x, \tau)$ satisfies (f1) and (f2).

If $s>\frac{1}{2}$ and $0<q<\frac{2sp}{2s+p-1},$ then there exists a positive constant $C$ such that
$$
u(x,t)+|\nabla_x u|^{\frac{2s}{2s+p-1}}(x,t)\leq C,  \;\;\; \forall \, (x,t) \in \mathbb{R}^n \times \mathbb{R},
$$
for all nonnegative solutions $u(x,t)$ of \eqref{1.21}.
\end{mthm}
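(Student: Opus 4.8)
The plan is to argue by contradiction using the blow-up and rescaling scheme outlined in the introduction, now made rigorous by the new \emph{local} regularity estimates (Theorems \ref{mthmu1} and \ref{mthmu2}), which are exactly what is needed to pass to the limit without any global control on the rescaled sequence. Suppose the conclusion fails. Then there is a sequence of nonnegative solutions $u_k$ of \eqref{1.21} and points $(x^k,t_k)$ with $M_k := u_k(x^k,t_k) + |\nabla_x u_k(x^k,t_k)|^{\frac{2s}{2s+p-1}} \to \infty$. Since each $u_k$ need not be bounded, I would apply a parabolic doubling lemma (in the spirit of Pol\'a\v{c}ik--Quittner--Souplet) to the function $(x,t)\mapsto u_k(x,t)^{\frac{1}{2s+p-1}\cdot ?}$ — more precisely to an auxiliary quantity $N_k(x,t) = u_k(x,t)^{\frac{p-1}{2s}} + |\nabla_x u_k(x,t)|^{\frac{p-1}{2s+p-1}}$ raised to the appropriate homogeneity so that it scales like an inverse length — to replace $(x^k,t_k)$ by a nearby point $(\bar x^k,\bar t_k)$ with $\bar M_k := N_k(\bar x^k,\bar t_k)\to\infty$ and $N_k(x,t)\le 2\bar M_k$ on a parabolic cylinder $Q_{R/\bar M_k}(\bar x^k,\bar t_k)$ for any fixed $R$.

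Next I would choose the scaling parameter $\lambda_k := \bar M_k^{-1}$ (so $\lambda_k\to 0$) and set
$$ v_k(x,t) = \lambda_k^{\frac{2s}{p-1}}\, u_k(\lambda_k x + \bar x^k,\ \lambda_k^2 t + \bar t_k). $$
A direct computation using the scaling of the operator — $(\partial_t-\Delta)^s$ scales with weight $\lambda^{-2s}$ in parabolic dilations — and the choice of exponent $\frac{2s}{p-1}$ (which is the natural one making the $f(x,u)\sim u^p$ term invariant, cf. the range $1<p<\frac{n+2}{n+2-2s}$) shows that $v_k$ solves
$$ (\partial_t-\Delta)^s v_k(x,t) = b(\lambda_k x + \bar x^k)\,\lambda_k^{\,2s - q\cdot\frac{2s+p-1}{p-1}+\frac{2s}{p-1}}\,|\nabla_x v_k|^q + \lambda_k^{2s+\frac{2sp}{p-1}}\, f(\lambda_k x + \bar x^k, \lambda_k^{-\frac{2s}{p-1}} v_k). $$
The hypothesis $0<q<\frac{2sp}{2s+p-1}$ is precisely what forces the exponent on the gradient term to be strictly positive, so that term vanishes in the limit; meanwhile by (f2) the last term converges, after a short argument on the normalization $\bar M_k\to\infty$, to $K_\infty v^p$ for an appropriate constant $K_\infty\in(0,\infty)$ (either $\bar C$ if $\lambda_k x+\bar x^k\to\infty$, or a finite limit of $K$ along a convergent subsequence of the base points). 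The doubling step guarantees $v_k$ and $\nabla_x v_k$ are uniformly bounded on every fixed $Q_R(0,0)$, and $v_k(0,0)+|\nabla_x v_k(0,0)|^{\frac{2s}{2s+p-1}}=1$.

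Now comes the decisive use of the paper's main results: applying Theorem \ref{mthmu1} and then Theorem \ref{mthmu2} (bootstrapping, since $1/2<s<1$ gives $2s+\alpha$ landing in the Schauder range for suitable $\alpha$, and the gradient term is H\"older by the first estimate) on $Q_R(0,0)\subset\subset Q_{2R}(0,0)$, I get uniform bounds on $\|v_k\|_{C^{2s+\alpha,\,s+\alpha/2}_{x,t}(Q_R(0,0))}$ depending only on the \emph{local} sup norms of $v_k$ and of the right-hand side, both of which are controlled. This is exactly where the old estimates \eqref{ST1}--\eqref{ST2} would have failed, because their right-hand sides involve $\|v_k\|_{L^\infty(\mathbb{R}^n\times(-\infty,4))}$, which we cannot bound. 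With the local higher-order bounds in hand, Arzel\`a--Ascoli and a diagonal argument extract a subsequence with $v_k\to v$ in $C^{2,\beta}_{x,t,\mathrm{loc}}$, and one must check that $v$ is an entire solution — here I would invoke Theorem \ref{mthm1}'s integral representation for $v_k$ (each $v_k\ge0$ and the right-hand side is eventually nonnegative, or at least the limiting one is) and pass to the limit in \eqref{inteq1} via dominated convergence using the local bounds plus the $\mathcal{L}$-tail control, concluding that $v$ is a bounded nonnegative entire solution of $(\partial_t-\Delta)^s v = K_\infty v^p$ in $\mathbb{R}^n\times\mathbb{R}$ with $v(0,0)+|\nabla_x v(0,0)|^{\frac{2s}{2s+p-1}}=1$, hence $v\not\equiv 0$. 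Finally, a Liouville-type nonexistence theorem for bounded nonnegative entire solutions of the fully fractional Fujita equation in the subcritical range $1<p<\frac{n+2}{n+2-2s}$ — either quoted or proved separately (e.g. by the rescaled test function / Pohozaev-type method adapted to the master operator, or by the monotonicity method of \cite{CG}) — yields $v\equiv 0$, the contradiction.

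The main obstacle, and the step I would spend the most care on, is the passage to the limit: ensuring that the sequence $v_k$ converges to a genuine \emph{entire} classical solution rather than merely a local one, and that the limiting right-hand side is the clean power $K_\infty v^p$. The subtlety is that the $\mathcal{L}(\mathbb{R}^n\times\mathbb{R})$ membership and the nonlocal integral \eqref{inteq1} involve the behavior of $v_k$ far away — precisely the region where we have no a priori control — so one must exploit nonnegativity (monotone/dominated convergence on the Green-function integral, with the Gaussian decay of $G$ absorbing the possible growth of $v_k$) together with uniform local bounds to get tightness of the tails. A secondary technical point is the correct formulation and verification of the parabolic doubling lemma for the quantity $N_k$ combining $u_k$ and $|\nabla_x u_k|$ with mismatched homogeneities; one wants the single scaling $\lambda_k$ to tame both, which is exactly why the constraint $0<q<\frac{2sp}{2s+p-1}$ appears, and I would check that inequality carefully against the scaling exponents. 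Everything else — the explicit scaling computation, Arzel\`a--Ascoli, and the Liouville input — is routine once these two points are settled.
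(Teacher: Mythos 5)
Your overall scheme coincides with the paper's: argue by contradiction, normalize by the scaling-homogeneous quantity $u_k^{\frac{p-1}{2s}}+|\nabla_x u_k|^{\frac{p-1}{2s+p-1}}$, localize by a doubling-type maximization, rescale with $\lambda_k$ equal to the inverse of that quantity, use Theorem \ref{mthmu1} followed by Theorem \ref{mthmu2} to get uniform $C^{2s+\varepsilon,s+\varepsilon}_{x,t}$ bounds on fixed cylinders, note that $0<q<\frac{2sp}{2s+p-1}$ makes the gradient term carry a positive power of $\lambda_k$ and hence vanish, and conclude with a Liouville theorem for $(\partial_t-\Delta)^s v=Kv^p$ in the range $1<p<\frac{n+2}{n+2-2s}$ (the paper quotes Theorem 1.1 of \cite{FP}). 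Up to a typo in your rescaled equation (the factor on the $f$-term should be $\lambda_k^{\frac{2sp}{p-1}}$, not $\lambda_k^{2s+\frac{2sp}{p-1}}$; otherwise that term would tend to $0$ rather than to $K_\infty v^p$), this part is the paper's argument.

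The genuine gap is exactly at the step you yourself single out as delicate: the passage to the limit in the nonlocal operator. You propose to write each $v_k$ via the representation \eqref{inteq1} and pass to the limit "by dominated convergence, with the Gaussian decay of $G$ absorbing the possible growth of $v_k$." This cannot work as stated: outside the cylinders $Q_{\bar R}(0,0)$ there is \emph{no} bound of any kind on $v_k$ (equivalently on the rescaled right-hand side), so there is no integrable dominating function, and Fatou only yields a one-sided inequality. The correct statement, and the one the paper relies on, is that the far-field contribution need not vanish but can only produce a constant defect: by the convergence result of \cite{CGL}, $(\partial_t-\Delta)^s v_k \to (\partial_t-\Delta)^s v - b$ locally, with $b\geq 0$, so the limit $v$ a priori solves $(\partial_t-\Delta)^s v = K(\bar x)v^p + b$. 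One must then rule out $b>0$; the paper does this by comparing $v$ with the Green-function potential of the truncated right-hand side (maximum principle), letting the truncation go to infinity, and observing that $b\int_{-\infty}^t\int_{\mathbb{R}^n}G(x-y,t-\tau)\,dy\,d\tau=\infty$, contradicting the boundedness of $v$. Your instinct to exploit nonnegativity and the representation formula is pointing at precisely this argument, but the proposal as written neither produces the defect constant nor eliminates it, so the limit equation is not justified. Once that step is repaired (either by quoting \cite{CGL} plus the $b=0$ argument, or by reproving that convergence statement), the rest of your outline goes through as in the paper.
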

\medskip

For related results concerning a priori estimates, asymptotic behavior, and blow-up rates of solutions for elliptic and parabolic equations, please see \cite{AV, MM, MM1, MV, MZ1, PQ, QS} and the references therein.
\medskip

\leftline{\bf The organization of this paper}

\,\,\,\,\,\,\,\, In Section 2, we provide precise definitions the parabolic H\"{o}lder spaces. We then establish the equivalence between the pseudo differential equation and the integral equation, leading to a proof of Theorem \ref{mthm1}.

In Section 3, we focus on estimating $v(x,t)$.

In Section 4, we derive both H\"{o}lder and Schauder estimates for $w(x,t)$.

Section 5 is devoted to obtaining a priori estimates for nonnegative solutions to master equations \eqref{1.20} and \eqref{1.21}. This is achieved through a detailed analysis involving blow-up and rescaling arguments.

\section{Preliminaries}

\subsection{Parabolic H\"{o}lder spaces}\label{2.1}
We begin by stating the standard definition of parabolic H\"{o}lder space $C^{2\alpha,\alpha}_{x,\, t}(\mathbb{R}^n\times\mathbb{R})$ (cf. \cite{Kry}) as follows.
\begin{itemize}
\item[(i)]
In the case  $0<\alpha\leq\frac{1}{2}$. We say that
$u(x,t)\in C^{2\alpha,\alpha}_{x,\, t}(\mathbb{R}^n\times\mathbb{R})$, if there exists a constant $C>0$ such that
\begin{equation*}
  |u(x,t)-u(y,\tau)|\leq C\left(|x-y|+|t-\tau|^{\frac{1}{2}}\right)^{2\alpha}
\end{equation*}
for all $x,\,y\in\mathbb{R}^n$ and $t,\,\tau\in \mathbb{R}$.
\item[(ii)]
When $\frac{1}{2}<\alpha\leq1$, we say that
$$u(x,t)\in C^{2\alpha,\alpha}_{x,\, t}(\mathbb{R}^n\times\mathbb{R}):=C^{1+(2\alpha-1),\alpha}_{x,\, t}(\mathbb{R}^n\times\mathbb{R}),$$ if $u$ is $\alpha$-H\"{o}lder continuous in $t$ uniformly with respect to $x$ and its gradient $\nabla_xu$ is $(2\alpha-1)$-H\"{o}lder continuous in $x$ uniformly with respect to $t$ and $(\alpha-\frac{1}{2})$-H\"{o}lder continuous in $t$ uniformly with respect to $x$.
\item[(iii)] While for $\alpha>1$, if
$u(x,t)\in C^{2\alpha,\alpha}_{x,\, t}(\mathbb{R}^n\times\mathbb{R}),$
then it means that
$$\partial_tu,\, D^2_xu \in C^{2\alpha-2,\alpha-1}_{x,\, t}(\mathbb{R}^n\times\mathbb{R}).$$
\end{itemize}
In addition, we can analogously define the local parabolic H\"{o}lder space $C^{2\alpha,\alpha}_{x,\, t,\, \rm{loc}}(\mathbb{R}^n\times\mathbb{R})$, which involve local conditions instead of global ones.

\subsection{Integral representation of solutions}

Assume that $u(x,t)$ is a nonnegative entire solution of the master equation
\begin{equation} \label{5.1}
(\partial_t - \Delta)^s u(x,t) = f(x,t), \;\;\; (x,t) \in \mathbb{R}^n \times \mathbb{R}
\end{equation}
with a nonnegative function $f(x,t)$.

We prove Theorem \ref{mthm1}. That is, we show that $u$ can be expressed as
\begin{equation} \label{5.0}
u(x,t) = c + \int_{-\infty}^t \int_{\mathbb{R}^n} f(y,\tau) G(x-y, t-\tau) dy d\tau,
\end{equation}
where
$$G(x-y, t-\tau) = \frac{C_{n,s}}{(t-\tau)^{n/2+1-s}} e^{-\frac{|x-y|^2}{4(t-\tau)}}$$
is the fundamental solution, or the Green's function associated with the master operator $(\partial_t - \lap)^s$, and $c$ is a nonnegative constant.

\begin{proof}[proof of theorem \ref{mthm1}]
First, we need to ensure that the integral
$$\int_{-\infty}^t \int_{\mathbb{R}^n} f(y,\tau) G(x-y, t-\tau) dy d\tau$$
is well-define and convergent.

Let $$Q_R = \{(x,t) \mid |x| \leq R, \, |t| \leq R^2 \}$$
be the parabolic cylinder of size $R$. Let
$$ \chi_R (x,t) = \left\{\begin{array}{ll}
1, & (x,t) \in Q_R, \\
0, & (x,t) \in Q^C_{R}
\end{array}
\right.
$$
be the characteristic function on $Q_R$.

Let $$w_R(x,t) = \int_{-\infty}^t \int_{\mathbb{R}^n} (f \chi_R) (y,\tau)  G(x-y, t-\tau) dy d\tau.$$
Then
$$ (\partial_t - \Delta)^s w_R(x,t) = (f \chi_R) (x,t), \,\, (x, t)\in \mathbb{R}^n\times \mathbb{R},$$
and from the expression of the Green's function, one can  see that
\begin{equation} \label{5.2}
w_R(x,t) \to 0, \; \mbox{ as } |x| \to \infty \; \mbox{ or as } \;  t \to - \infty.
\end{equation}

Set $v_R(x,t) = u(x,t) - w_R(x,t)$. Then
\begin{equation} \label{5.3}
 (\partial_t - \Delta)^s v_R(x,t) = f(x,t)-(f \chi_R) (x,t) \geq 0 , \,\, (x, t)\in \mathbb{R}^n\times \mathbb{R}.
 \end{equation}
We show that
\begin{equation} \label{5.4}
v_R(x,t) \geq 0, \,\, (x, t)\in \mathbb{R}^n\times \mathbb{R}.
\end{equation}
Otherwise, due to the non-negativeness of $u$ and (\ref{5.2}), there exists a negative minimum $(x^o, t_o)$ of $v_R(x,t)$, and it follows that
$$(\partial_t-\Delta)^s v_R(x^o,t_o)
:=C_{n,s}\int_{-\infty}^{t}\int_{\mathbb{R}^n}
  \frac{v_R(x^o,t_o)-v_R(y,\tau)}{(t-\tau)^{\frac{n}{2}+1+s}}e^{-\frac{|x-y|^2}{4(t-\tau)}}\operatorname{d}\!y\operatorname{d}\!\tau <0.
  $$
This contradicts (\ref{5.3}). Therefore, (\ref{5.4}) must be valid. Consequently,
$$ u(x,t) \geq \int_{-\infty}^t \int_{\mathbb{R}^n} (f \chi_R) (y,\tau)  G(x-y, t-\tau) dy d\tau.$$
Now let $R \to \infty$, we arrive at, for each $(x, t)\in \mathbb{R}^n\times \mathbb{R}$,
$$u(x,t) \geq \int_{-\infty}^t \int_{\mathbb{R}^n} f(y,\tau) G(x-y, t-\tau) dy d\tau \equiv \tilde{w}(x,t).$$

Let $$\tilde{v}(x,t) = u(x,t) - \tilde{w}(x,t).$$
Then $\tilde{v}(x,t) \geq 0$, and
$$(\partial_t-\Delta)^s \tilde{v}(x,t) = 0, \,\, (x, t)\in \mathbb{R}^n\times \mathbb{R}.$$

Now (\ref{5.0}) is a consequence of the following Liouville theorem from \cite{CGM}:
\begin{proposition}\label{Liouville}
Let $0<s<1$ and $n\geq2$.
Assume that $\tilde{v}(x,t)$ is a solution of
$$
(\partial_t-\Delta)^s \tilde{v}(x,t) =0, \; \,\,(x, t)\in \mathbb{R}^n\times \mathbb{R}
$$
in the sense of distribution.

In the case $\frac{1}{2}<s < 1$, we assume additionally that
\begin{equation}\label{AA}
  \liminf_{|x|\rightarrow\infty}\frac{\tilde{v}(x,t)}{|x|^\gamma}\geq 0 \; ( \mbox{or} \; \leq 0) \,\,\mbox{for some} \;0\leq\gamma\leq 1.
\end{equation}
Then $\tilde{v}$ must be constant.
\end{proposition}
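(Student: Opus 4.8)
The plan is to show that $\tilde v$ is necessarily a polynomial, and then to use the admissibility class and the growth hypotheses to collapse it to a constant. Throughout, $\tilde v$ is understood to lie in the slowly increasing space $\mathcal{L}(\mathbb{R}^n\times\mathbb{R})$, which is what renders $(\partial_t-\Delta)^s\tilde v$ meaningful; unwinding the Gaussian weight in the definition of $\mathcal{L}$ shows that membership in $\mathcal{L}$ entails polynomial growth of $\tilde v$ in both $x$ and $t$, so in particular $\tilde v\in\mathcal{S}'(\mathbb{R}^n\times\mathbb{R})$.

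The core step is a Fourier-transform argument. The symbol of $(\partial_t-\Delta)^s$ is $m(\omega,\xi)=(i\omega+|\xi|^2)^s$, with $\omega$ dual to $t$ and $\xi$ dual to $x$; since $i\omega+|\xi|^2$ lies in the closed right half-plane and vanishes only at the origin, $m$ is smooth and nowhere zero on $\mathbb{R}^{n+1}\setminus\{0\}$. Hence the equation $(\partial_t-\Delta)^s\tilde v=0$ forces $\operatorname{supp}\widehat{\tilde v}\subseteq\{0\}$, and a tempered distribution supported at a single point is a polynomial; therefore $\tilde v(x,t)=P(x,t)$.

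It remains to identify $P$. Inserting each monomial of $P$ into the defining integral of $\mathcal{L}$ reduces matters to convergence of integrals of the form $\int_0^\infty \sigma^{(n+k)/2+\ell}\,(1+\sigma^{n/2+1+s})^{-1}\,d\sigma$, the factor $\sigma^{(n+k)/2}$ coming from integrating a degree-$k$-in-$x$ monomial against the Gaussian at "time lag" $\sigma=t-\tau$ and $\sigma^\ell$ from a degree-$\ell$-in-$t$ factor. Such an integral converges exactly when $k/2+\ell<s$; consequently $\mathcal{L}$ kills every monomial of degree $\ge 2$ in $x$, every monomial carrying any $t$-dependence, and — precisely when $2s\le 1$ — the linear monomials $x_i$ as well. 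So $P(x,t)=a_0+b\cdot x$ with constants $a_0\in\mathbb{R}$ and $b\in\mathbb{R}^n$, and already $b=0$ if $0<s\le 1/2$. If $1/2<s<1$ and $b\ne 0$, choose the ray $x=\mp R\,b/|b|$ with the sign dictated by which one-sided hypothesis is in force: along it $\tilde v(x,t)/|x|^\gamma$ tends to $\mp\infty$ when $\gamma<1$ and to $\mp|b|\ne 0$ when $\gamma=1$, contradicting $\liminf\ge 0$ (resp. $\limsup\le 0$). Hence $b=0$ and $\tilde v\equiv a_0$.

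The point requiring the most care is the passage through the Fourier transform: one must justify that the $\mathcal{L}$-condition (together with the distributional meaning of the equation) indeed places $\tilde v$ in $\mathcal{S}'$, and that the vanishing of $m\,\widehat{\tilde v}$ away from the origin is legitimate even though $m$ fails to be smooth at the origin — this is handled by testing only against functions whose Fourier transforms are supported away from $0$ and dividing by $m$ there. An alternative, closer to the integral-representation techniques of this paper, would avoid Fourier analysis altogether: after mollifying in $(x,t)$ to reduce to classical solutions (legitimate since the operator has constant coefficients), rescale $\tilde v_R(x,t)=\tilde v(Rx,R^2t)$, invoke the interior estimate of Theorem \ref{mthmv} in its scaled form $\|\nabla_x\tilde v\|_{L^\infty(Q_R)}\le C R^{-1}\|\tilde v\|_{L^\infty(Q_{2R})}$ together with the higher-derivative analogues, and send $R\to\infty$ to annihilate all derivatives; the obstacle there is that one must first convert the one-sided bound into genuine two-sided local control of $\tilde v$, which for the nonlocal operator requires comparison with the barrier $|x|^\gamma$ and the sign of $(-\Delta)^s|x|^\gamma$ for $0<\gamma<2s$.
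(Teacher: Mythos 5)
First, a structural point: the paper does not prove Proposition \ref{Liouville} at all — it is quoted verbatim from the preprint \cite{CGM} — so there is no internal proof to compare yours with, and your proposal has to stand on its own. Its architecture (reduce to a polynomial via the Fourier symbol $(i\omega+|\xi|^2)^s$, use admissibility in $\mathcal{L}(\mathbb{R}^n\times\mathbb{R})$ to discard every monomial except constants and, when $s>1/2$, the linear ones, then use \eqref{AA} to kill the linear part) is reasonable, and the last two steps are essentially correct: the convergence criterion $k/2+\ell<s$ for a monomial of degree $k$ in $x$ and $\ell$ in $t$ is right, and the ray argument with \eqref{AA} does eliminate $b\cdot x$.

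The genuine gap is your very first claim, that membership in $\mathcal{L}(\mathbb{R}^n\times\mathbb{R})$ ``entails polynomial growth of $\tilde v$ in both $x$ and $t$'', hence $\tilde v\in\mathcal{S}'$. This is false. The defining integral of $\mathcal{L}$ at time $t$ only sees $\tau\le t$, so it imposes no restriction whatsoever on the growth of $\tilde v$ as $t\to+\infty$ (for instance $e^{e^{t}}$ lies in $\mathcal{L}$); moreover the Gaussian weight has width $\sqrt{t-\tau}$, so the condition tolerates far more than polynomial growth in $x$ as well (e.g. $e^{|x|}$ on a bounded time slab, or $e^{|x|^2/(8|\tau|)}\chi_{\{\tau\le -1\}}$, belong to $\mathcal{L}$ but are not tempered). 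Hence temperedness cannot be read off from $\mathcal{L}$; it must be extracted from the equation itself (via a representation or mean-value formula, or interior estimates giving pointwise control of distributional solutions), and that is exactly the substantive content your argument skips. The same missing control underlies the second delicate point you flag but do not resolve: to get $\operatorname{supp}\widehat{\tilde v}\subseteq\{0\}$ you need to pair $\tilde v$ with Schwartz functions whose Fourier transforms avoid the origin, while the hypothesis gives the equation only against compactly supported test functions, and extending the pairing again requires growth control of $\tilde v$. Your alternative sketch (scaling the interior estimate of Theorem \ref{mthmv} on $Q_R$ and sending $R\to\infty$) faces the further obstruction you yourself note — upgrading the one-sided condition \eqref{AA} to two-sided local bounds — plus the fact that Theorem \ref{mthmv} is proved for functions already given by the kernel representation, which is not available a priori for a general distributional solution. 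As it stands, the proposal is an outline resting on a false foundational step rather than a proof.
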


This completes the proof of Theorem \ref{mthm1}. \end{proof}

\section{Estimates of the homogeneous part v(x,t)}

For convenience in the upcoming long calculations, we choose $Q$ and $\tilde{Q}$ slightly different from their definitions in the Introduction as follows
$$ Q \equiv B_2(0) \times (0,3), \mbox{ and } \; \; \tilde{Q} = B_1(0) \times (1,2) \subset \subset Q.$$
However, as the proofs show, as long as the sub-cylinder $\tilde{Q}$ is essentially contained within $Q$, specifically, if
the distance between $\partial \tilde{Q}$ and $\partial Q$ is bounded below by a positive constant, say $\geq 1$, the constants $C$ in all subsequent estimates remain unchanged.

Set
$$ f_{Q^c} (x,t) = \left\{ \begin{array}{ll} f(x,t), & (x,t) \in Q^c \\
0, & (x, t) \in Q.
\end{array}
\right.
$$

Denote
$$v(x,t) = \int_{-\infty}^t \int_{\mathbb{R}^n} f_{Q^c}(y,\tau) G(x-y, t-\tau) dy d\tau$$
where
$$G(x-y, t-\tau) = \frac{C_{n,s}}{(t-\tau)^{n/2+1-s}} e^{\frac{|x-y|^2}{4(t-\tau)}}$$
is the Green's function associated with the master operator $(\partial_t - \lap)^s$.

Since $v(x,t)$ satisfies the homogeneous equation
$$(\partial_t - \Delta)^s v(x,t) = 0, \;\; (x,t) \in Q,$$
we refer it as the homogeneous part.
\medskip

In this section, we prove Theorem \ref{mthmv}, that is

\begin{theorem} \label{thm2.1} Assume that $v$ is bounded in $Q$, then for any $\alpha \in (0,1)$,
$$
\|v\|_{C^{2, \alpha}_{x, t} (\tilde{Q})} \leq C \|v\|_{L^\infty (Q)}.
$$

\end{theorem}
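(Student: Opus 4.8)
The plan is to exploit the fact that $v$ is given by an explicit convolution against the heat-type kernel $G$ with a source $f_{Q^c}$ supported \emph{outside} $Q$, so that for $(x,t)\in\tilde Q$ the integration variable $(y,\tau)$ is bounded away from $(x,t)$ in the parabolic sense. Concretely, since $\tilde Q = B_1(0)\times(1,2)$ is separated from $Q^c=(B_2(0)\times(0,3))^c$ by a fixed distance, whenever $(y,\tau)\in Q^c$ we have either $|x-y|\ge 1$ or $t-\tau\ge 1$ (indeed $\tau\le 0$ gives $t-\tau\ge1$, while $\tau\in(0,3)$ forces $|y|\ge2$ hence $|x-y|\ge1$). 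Thus the kernel $G(x-y,t-\tau)$, and each of its $(x,t)$-derivatives, is a \emph{smooth, rapidly decaying} function of $(y,\tau)$ on the region of integration, and differentiation under the integral sign is legitimate. The strategy is therefore: (1) justify that we may differentiate $v$ under the integral up to the orders $D_x^\alpha$, $|\alpha|\le 2$, $\partial_t$, and mixed; (2) bound each resulting derivative kernel pointwise by a constant multiple of $\sum_{j=1}^{2^n} G(x-x^j+(\text{shift}),\,t-\tau)$-type expressions using Lemma \ref{key0}; (3) conclude that each derivative of $v$ at $(x,t)\in\tilde Q$ is controlled by finitely many values $v(x^j,\tau)$ or, after integrating, by $\|v\|_{L^\infty(Q)}$.

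The first main step is the pointwise kernel estimate. Differentiating $G(x-y,t-\tau)=C_{n,s}(t-\tau)^{-(n/2+1-s)}e^{-|x-y|^2/(4(t-\tau))}$ in $x$ produces factors like $\frac{x_i-y_i}{t-\tau}$, and second derivatives produce $\frac{(x_i-y_i)(x_j-y_j)}{(t-\tau)^2}$ and $\frac{1}{t-\tau}$; differentiating in $t$ produces $\frac{1}{t-\tau}$ and $\frac{|x-y|^2}{(t-\tau)^2}$. Exactly as in the heuristic preceding Lemma \ref{key0}, these polynomial-in-$\frac{|x-y|}{\sqrt{t-\tau}}$ prefactors are not bounded by a constant against the same Gaussian at the same base point, but they \emph{are} absorbed if we move the base point: by Lemma \ref{key0}, for $y$ in the $j$-th quadrant centered at $x$ and for the diagonal vectors $\eta_j\in\partial B_{1/\sqrt n}(0)$,
$$
\Big|D_x^\alpha G(x-y,t-\tau)\Big| \le C\sum_{j=1}^{2^n} G\big(x+\eta_j-y,\ t-\tau\big),\qquad |\alpha|=1,2,
$$
and likewise for $\partial_t G$. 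One must check the scaling: Lemma \ref{key0} is stated for $y\in B_1^c(0)$ and unit-size $\eta_j$, but on $\tilde Q\times Q^c$ the relative variable $x-y$ has $|x-y|\ge c_0$ for a fixed $c_0$, so a fixed dyadic rescaling reduces to the stated form, at the cost of enlarging the universal constant $C$ (and shrinking the perturbation $\eta_j$ correspondingly, which is harmless since the perturbed points $x+\eta_j$ still lie in $B_2(0)$ for $x\in B_1(0)$ and $|\eta_j|$ small).

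The second main step is integration: for each derivative $D^\beta$ of order $\le 2$ in $x$ (and $\le1$ in $t$),
$$
\big|D^\beta v(x,t)\big| \le C\int_{-\infty}^t\!\!\int_{\mathbb R^n} |f_{Q^c}(y,\tau)|\sum_{j=1}^{2^n} G\big(x+\eta_j - y,\ t-\tau\big)\,dy\,d\tau = C\sum_{j=1}^{2^n} v(x+\eta_j,\ t),
$$
and since $x\in B_1(0)$ with $|\eta_j|\le 1/\sqrt n$ forces $x+\eta_j\in B_2(0)$, and $t\in(1,2)\subset(0,3)$, each term is $\le\|v\|_{L^\infty(Q)}$. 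This yields the $L^\infty(\tilde Q)$ bound on all derivatives $D_x^\beta v$ with $|\beta|\le2$ and on $\partial_t v$. To upgrade to the $C^{2,\alpha}_{x,t}$ norm — i.e.\ to control the $\alpha$-Hölder seminorm in $t$ of $D_x^2 v$ and in $x$ of $D_x^2 v$, and the appropriate Hölder seminorm of $\partial_t v$ — one differentiates once more: the third spatial derivatives and the mixed $\partial_t D_x$ derivatives are again estimated by the same Lemma \ref{key0} mechanism (the lemma only supplies orders $1,2$ for $D_x$ and order $1$ for $\partial_t$, but the same elementary computation bounding polynomial Gaussian prefactors extends to any fixed order, as the paper's Remark after Theorem \ref{mthmv} notes), giving bounded third derivatives on a slightly larger cylinder $\tilde Q \subset\subset Q' \subset\subset Q$ and hence Lipschitz — a fortiori $\alpha$-Hölder — control of all second derivatives on $\tilde Q$. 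Alternatively, and more cleanly, one proves directly that $D_x^2 v$ and $\partial_t v$ are Lipschitz in $(x,t)$ on $\tilde Q$ by writing a difference $D^\beta v(x,t)-D^\beta v(x',t')$ as an integral of $\nabla_{x,t}D^\beta G$ along the segment joining the two points and again invoking the kernel estimate uniformly along that segment (which stays in a fixed compact set bounded away from $Q^c$).

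The main obstacle is the pointwise kernel estimate of Step 1 — i.e.\ proving Lemma \ref{key0} and correctly handling its rescaling to the region $\tilde Q\times Q^c$ — since this is precisely the point where the naive bound \eqref{est-fuc1} fails and the "directional perturbation average" idea must be deployed. The combinatorial geometry (choosing the right $\eta_j$ per quadrant so that $|y-x|^2 \ge |y-x-\eta_j|^2 + c|y-x|$ for all $y$ in the $j$-th quadrant far from $x$) is the crux; once that inequality is in hand, every subsequent step is a routine estimation of Gaussian integrals and differentiation under the integral sign, with no further surprises.
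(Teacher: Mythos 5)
Your proposal is correct and follows essentially the same route as the paper: differentiate the integral representation of $v$ under the integral sign, absorb the polynomial Gaussian prefactors by the directional-perturbation estimate of Lemma \ref{key0} (which applies here without any rescaling, since $x\in B_1(0)$ and $y\in B_2^c(0)$ already give $|x-y|\ge 1$, while the remaining part of $Q^c$ has $t-\tau\ge 1$ so no shift is needed there), and bound each derivative by $C\sum_j v(x+\eta_j,t)\le C\|v\|_{L^\infty(Q)}$. The one place the paper works harder is the $t$-derivative, where it justifies the formal differentiation via a test-function/difference-quotient argument and shows that the boundary contribution from the moving upper limit $\tau=t$ vanishes (because on the support of $f_{Q^c}$ near $\tau=t$ one has $|x-y|\ge 1$, killing the kernel's singularity); your blanket ``differentiation under the integral sign is legitimate'' glosses over this, but the needed domination is supplied by the same shifted-kernel mechanism, so this is a difference in rigor of presentation rather than of method.
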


To prove the theorem, we derive estimates in $x$ and in $t$ separately.

\subsection{Regularity of v in x}

For each fixed $t \in (1,2)$, we obtain

\begin{lemma} \label{thm2.1.1}
There exists a constant $C$ independent of $t\in(1,2),$ such that
$$
\|v(\cdot,t)\|_{C^{2}_{x} (B_1(0))} \leq C \|v(\cdot,t)\|_{L^\infty (B_2(0))}.
$$
\end{lemma}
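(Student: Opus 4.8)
\textbf{Proof proposal for Lemma \ref{thm2.1.1}.}

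The plan is to differentiate the integral representation of $v(\cdot,t)$ under the integral sign and bound the resulting kernels pointwise on $\tilde{Q}$ using the fully fractional heat kernel estimate of Lemma \ref{key0}. Since $v(x,t) = \int_{-\infty}^t \int_{\mathbb{R}^n} f_{Q^c}(y,\tau) G(x-y,t-\tau)\,dy\,d\tau$ and $f_{Q^c}$ is supported in $Q^c$, for $(x,t)\in\tilde{Q} = B_1(0)\times(1,2)$ the variable $y$ ranges only over $B_2^c(0)$ (the part of $Q^c$ with $\tau < t \le 2$ that is relevant; one should note that in the range $1<t$ and $y\in B_2(0)$ one still has contributions from $\tau<0$, so strictly one splits $Q^c$ into the spatial-exterior piece $B_2^c(0)\times\mathbb{R}$ and the temporal piece $B_2(0)\times(\mathbb{R}\setminus(0,3))$; the latter has $t-\tau \ge 1$ bounded away from $0$, so its kernel and all its $x$-derivatives are smooth and bounded, giving a harmless contribution controlled by $\|v\|_{L^\infty(Q)}$ via a crude estimate, or better, absorbed into the main argument). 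The key singular piece is the integral over $y\in B_2^c(0)$, where $|x-y|$ is bounded below but $t-\tau$ may be arbitrarily small.

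The main step is: differentiating $G(x-y,t-\tau) = C_{n,s}(t-\tau)^{-n/2-1+s}e^{-|x-y|^2/4(t-\tau)}$ in $x$ up to second order produces factors that are polynomials in $(x_i-y_i)/(t-\tau)$ times $G$. By Lemma \ref{key0} (rescaled: replacing $y$ by $(x-y)/\sigma$ and $\tau$ by $(t-\tau)/\sigma^2$ for a suitable $\sigma$ matching the gap $\mathrm{dist}(\tilde Q,\partial Q)\ge 1$), for each quadrant $I_j$ centered at $x$ there is a fixed direction $\eta_j\in\partial B_{1/\sqrt n}(0)$ so that $|D_x^\beta G(x-y,t-\tau)| \le C\sum_{j=1}^{2^n} G(x-y+\eta_j,t-\tau)$ for $|\beta|=1,2$ and all $y\in B_2^c(0)$, with $C=C(n)$. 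The points $x^j := x + \eta_j$ lie in $B_2(0)$ since $|x|<1$ and $|\eta_j|<1$, so integrating this pointwise bound against the nonnegative density $f_{Q^c}(y,\tau)\ge 0$ (here nonnegativity of $f$ is used crucially) and comparing with the representation of $v(x^j,t)$ yields
$$
|D_x^\beta v(x,t)| \le C\sum_{j=1}^{2^n} \int_{-\infty}^t\int_{B_2^c(0)} f_{Q^c}(y,\tau)\,G(x^j-y,t-\tau)\,dy\,d\tau \le C\sum_{j=1}^{2^n} v(x^j,t) \le C\,2^n\,\|v(\cdot,t)\|_{L^\infty(B_2(0))}.
$$
Combined with the trivial bound $|v(x,t)|\le\|v(\cdot,t)\|_{L^\infty(B_2(0))}$, this gives $\|v(\cdot,t)\|_{C^2_x(B_1(0))}\le C\|v(\cdot,t)\|_{L^\infty(B_2(0))}$ with $C$ independent of $t\in(1,2)$.

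I expect the main obstacle to be the precise justification of the pointwise kernel inequality $|D_x^\beta G| \le C\sum_j G(\cdot+\eta_j,\cdot)$ uniformly in the regime where $t-\tau\to 0^+$: here the prefactor $(t-\tau)^{-k}$ coming from differentiation competes with the Gaussian, and one must show that the geometric gain $|y-x|^2 \ge |y-x^j|^2 + c|y-x|$ on the cone $I_j$ (valid because $\eta_j$ points "into" the cone, the elementary inequality the authors flag for $n=2$) produces a factor $e^{-c|y-x|/4(t-\tau)}$ that dominates any polynomial in $1/(t-\tau)$ — using $r^k e^{-cr/(t-\tau)} \le C(t-\tau)^{-k}$... wait, that goes the wrong way; rather one writes the bad factor as $\big(|x-y|/(t-\tau)\big)^k e^{-|x-y|^2/4(t-\tau)}$ and splits the exponent, absorbing $\big(|x-y|/(t-\tau)\big)^k e^{-|x-y|^2/8(t-\tau)}\le C$ when $|x-y|^2/(t-\tau)$ is large and handling $|x-y|^2/(t-\tau)$ bounded directly, while the remaining $e^{-|x-y|^2/8(t-\tau)}$ together with the cone inequality covers $e^{-|x^j-y|^2/4(t-\tau)}$ up to adjusting constants. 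Secondary care is needed for the temporal tail of $Q^c$ (the $\tau\notin(0,3)$ piece) and for verifying that the rescaling in Lemma \ref{key0} correctly accounts for the size-$2$ versus size-$1$ cylinders, but these are routine once the main kernel estimate is in hand.
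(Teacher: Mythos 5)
Your overall route is the same as the paper's: differentiate the representation of $v$ under the integral sign, split the support of $f_{Q^c}$ into the spatial piece $B_2^c(0)$ and the temporal piece $B_2(0)\times(-\infty,0]$ (where $t-\tau\ge 1$, so the kernel and its derivatives are harmless), introduce the $2^n$ perturbed points $x^j=x+\eta_j$ on the diagonals of the cones $I_j$ centered at $x$, prove the kernel comparison $|D_x^\beta G(x-y,t-\tau)|\le C\sum_j G(x^j-y,t-\tau)$ via the law-of-cosines gain $|y-x|^2\ge |y-x^j|^2+\frac1n|y-x|$, and finally use $f\ge 0$ to convert this into $|D^\beta_x v(x,t)|\le C\sum_j v(x^j,t)\le C\|v(\cdot,t)\|_{L^\infty(B_2(0))}$. (Citing Lemma \ref{key0} outright is slightly circular, since in the paper that lemma is established inside this very proof; but you also sketch its proof, so the substance lies in that sketch.)

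The gap is precisely in the absorption mechanism you finally settle on. Splitting $e^{-|x-y|^2/4(t-\tau)}$ into two equal halves, using one half to kill the polynomial factor, and then claiming that the remaining $e^{-|x-y|^2/8(t-\tau)}$ ``together with the cone inequality covers $e^{-|x^j-y|^2/4(t-\tau)}$ up to adjusting constants'' does not work: the cone inequality only yields a gain \emph{linear} in $|y-x|$, so from the half exponent you get at best $e^{-|x^j-y|^2/8(t-\tau)}$, and the inequality $e^{-|x^j-y|^2/8(t-\tau)}\le Ce^{-|x^j-y|^2/4(t-\tau)}$ fails as $t-\tau\to0$ for fixed $|x^j-y|$ (equivalently, $|y-x|^2\ge 2|y-x^j|^2$ is false for large $|y-x|$, since the geometric gain is only $O(|y-x|)$). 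Nor can you stop at the dilated kernel $e^{-|x^j-y|^2/8(t-\tau)}$, i.e.\ essentially $G(x^j-y,2(t-\tau))$: integrating $f_{Q^c}$ against a kernel with doubled time lag is no longer recognizable as $v(x^j,\cdot)$ at any time, so the crucial final comparison with $\|v\|_{L^\infty}$ is lost. The fix is exactly your first instinct, which you discarded prematurely and which is the paper's argument: do not split the Gaussian at all; apply the cone inequality to the full exponent to get $e^{-|x-y|^2/4(t-\tau)}\le e^{-|x^j-y|^2/4(t-\tau)}\,e^{-|y-x|/(4n(t-\tau))}$, and then, with $X=|y-x|/(t-\tau)$, use $X^p e^{-X/(4n)}\le C_{p,n}$ to kill every polynomial factor produced by differentiation (including the pure $(t-\tau)^{-1}$ term from $\partial^2_{x_ix_j}$, since $|y-x|\ge1$ on the spatial piece, while on the temporal piece $t-\tau\ge1$ and no perturbation is needed). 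With that correction the rest of your argument goes through and coincides with the paper's proof.
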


Here $\|v(\cdot,t)\|_{C^{2}_{x} (B_1(0))}$ is the $C^2$ norm of $v(\cdot, t)$ in $x \in B_1(0)$ for each fixed $t$. Similar notation will be used later.

\begin{proof} Notice that
\begin{eqnarray}\nonumber
    \label{int-v}v(x,t)&=&\int_{-\infty}^t \int_{\mathbb{R}^n}  {f_{Q^c}(y,\tau)}G(x-y,t-\tau)dy d\tau\\
&=&C_{n,s}\int_{-\infty}^t \int_{\mathbb{R}^n}\frac{{f_{Q^c}(y,\tau)}}{(t-\tau)^{n/2+1-s}} e^{-\frac{|x-y|^2}{4(t-\tau)}}dyd\tau.\end{eqnarray}

Evaluating the first and second derivatives of $G(x,t)$  with respect to $x$, we have
\[\frac{\partial}{\partial {x_i}}G(x,t)= \frac{C_{n,s}}{t^{n/2+1-s}}\frac{x_i}{2t}e^{-\frac{|x|^2}{4t}},\]
and
\[\frac{\partial^2}{\partial {x_i}\partial {x_j}}G(x,t)= \frac{C_{n,s}}{t^{n/2+1-s}}\left[\frac{\delta_{ij}}{2t} +\frac{x_ix_j}{4t^2}\right]e^{-\frac{|x|^2}{4t}}.\]
Then from \eqref{int-v}, we obtain\begin{equation}\label{int-vi}
v_{x_i}(x,t)=C_{n,s}\int_{-\infty}^t \int_{\mathbb{R}^n}  \frac{f_{Q^c}(y,\tau)}{(t-\tau)^{n/2+1-s}} \frac{x_i-y_i}{2(t-\tau)}e^{-\frac{|x-y|^2}{4(t-\tau)}} dy d\tau,
\end{equation}
and
\begin{equation}\label{int-vij}
v_{x_{i}x_{j}}(x,t)=C_{n,s}\int_{-\infty}^t \int_{\mathbb{R}^n}  \frac{f_{Q^c}(y,\tau)}{(t-\tau)^{n/2+1-s}} \left[\frac{\delta_{ij}}{2(t-\tau)} +\frac{(x_i-y_i)(x_j-y_j)}{4(t-\tau)^2}\right]e^{-\frac{|x-y|^2}{4(t-\tau)}}dy d\tau.
\end{equation}

Our goal is to control $v_{x_i}(x,t)$ and $v_{x_ix_j}(x,t)$ using $v(x,t)$ itself based on the integral expression \eqref{int-v} for all   $(x,t)\in\tilde Q$ . However, it is obviously impossible  to  compare directly two integrands in integral \eqref{int-v} and \eqref{int-vi}, in other words, there does not exist a universal constant $C$ such that the following inequality
\begin{equation}\label{est-fuc}\frac{|x-y|}{t-\tau}e^{-\frac{|x-y|^2}{4(t-\tau)}}\leq Ce^{-\frac{|x-y|^2}{4(t-\tau)}}, \ \forall(y,\tau)\in Q^c,\end{equation}
holds, because the extra term $\frac{|x-y|}{t-\tau}$ on the left hand side is unbounded.  To overcome this difficulty,  we divide the integration region $Q^c$ into  finite many parts $\{I_j\}_{j\in\{0,1,2,...N\}}$, then, in each part $I_j$, we choose one point $x^j$ in a neighborhood of $x$, and  modify estimation   \eqref{est-fuc} as
\begin{eqnarray}
\frac{|y-x|^2}{(t-\tau)^2} e^{-\frac{|y-x|^2}{4(t-\tau)}}
& \leq & Ce^{-\frac{|y-x^j|^2}{4(t-\tau)}}, \ \forall(y,\tau)\in I_j \label{key est}
\end{eqnarray}
with universal constant $C$.

To this end,
for each fixed  $x\in B_1(0)$, we choose   $I_j $   to be the $j$th quadrant  in the n-dimensional coordinate system centered at $x$, with $ j=1,2,...,2^n.$ And in each $I_j$, we pick a point $$x^j\in \partial B_{\delta}(x)\cap l_j,$$ where $l_j$ denotes the diagonal line of the $j$th quadrant $I_j,$ and $\delta$ be a sufficient small constant to be determined later.

Now for any $y\in B^c_2(0)\cap I_j$,  let us consider the  triangles $\Delta xyx^j$. Let $\theta$ be the angle between $x^j-x$ and $y-x,$ then  it's easy to see  that$$ \cos\theta \geq  \frac{1}{\sqrt{n}}.$$ Hence, it follows from {\em  the law of cosine } that
\begin{eqnarray} \nonumber|y-x|^2&=& |y-x^j|^2-|x^j-x|^2+2|y-x|\cdot|x^j-x|\cdot\cos \theta, \\ \nonumber
&\geq &|y-x^j|^2-\delta^2+\frac{2}{\sqrt{n}}\delta|y-x|\\
&\geq&|y-x^j|^2+\delta^2|y-x|.
\end{eqnarray}
Here we may take $\delta =\frac{1}{\sqrt{n}}$  as  a universal constant,  due to the  fact that $|y-x|\geq1$.

(The following Figure 1 shows the case of $n=2$. Here $\R^2$ is divided into four quadrants centered at $x$.  For $y$ in the third quadrant $I_3$, we choose $x^3$ as the intersection of $\partial B_{1/\sqrt{2}}(x)$ and the diagonal $l_3$.)
\begin{center}
\begin{tikzpicture}
    \def\circleA{2} 
    \def\circleO{2}
    \def\circleB{4} 

    \draw[thick,-, blue] (-5,0) -- (6,0) node[right] {};
    \draw[thick,-,blue] (0,-5) -- (0,5) node[above] {};
    \draw[thick, blue] (0,0) circle (\circleA) node[xshift=-38pt, yshift=12pt] {$B_1(x)$};
    \draw[thick](1,-0.5) circle (\circleO)node[xshift=40pt, yshift=-10pt] {$B_1(o)$};
    \draw[thick] (1, -0.5) circle (\circleB)node[xshift=120pt, yshift=-50pt] {$B_2(o)$} ;

    \filldraw[black] (1, -0.5) circle (2pt) node[below] {$o$};
      \coordinate (x) at (0,0);
      \def\radius{2};
      \path[name path=circle] (x) circle (\radius);
      \path[name path=line1] (-4,-5) -- (4.2,5);
      \path[name path=line2] (4,-5) -- (-4,5);
      \path[name intersections={of=circle and line1, by={P1,P2}}];
      \path[name intersections={of=circle and line2, by={Q1,Q2}}];
    \filldraw[red] (P1) circle (2pt) node[xshift=-4pt, yshift=10pt] {${x}^1$};
    \filldraw[red] (P2) circle (2pt) node[xshift=4pt, yshift=-10pt] {${x}^3$};
    \filldraw[red] (Q1) circle (2pt) node[above] {${x}^2$};
    \filldraw[red] (Q2) circle (2pt) node[xshift=12pt] {${x}^4$};
    \filldraw[red] (-5,-2)  circle (2pt) node[below] {$y$};

    \filldraw (0,0)  circle (2pt) node[right] {$x$};

    \draw[thick, purple] (-3.3,-4) -- (3.5,4) node[above] {};
    \draw[thick, purple] (3.2,-4) -- (-3.2,4) node[above] {};

    \draw[thick, green] (-5,-2) -- (0,0) node[above] {} ;
    \draw[thick, green] (-5,-2) -- (P2) node[below] {} ;

    \node[blue] at (2.9,-4.5) {$I_4$};
    \node[blue] at (2.9, 4.5) {$I_1$};
    \node[blue] at (-2.9,4.5) {$I_2$};
    \node[blue] at (-2.9,-4.5) {$I_3$};
     \node[purple] at (3.5,-4) {$l_4$};
    \node[purple] at (3.5, 3.5) {$l_1$};
    \node[purple] at (-3.2,4.2) {$l_2$};
    \node[purple] at (-3.4,-4.2) {$l_3$};
     \node[purple] at (-0.23,-0.23) {$\theta$};
\node [below=1cm, align=flush center,text width=8cm] at (0,-4.1)
        {$Figure$ 1.  {The case of $n=2$}.};

\end{tikzpicture}
\end{center}

Subsequently, for any fixed $t\in (1,2)$ and $\tau<t,$ we obtain
\begin{eqnarray}\label{est key}
\frac{|y-x|}{t-\tau} e^{-\frac{|y-x|^2}{4(t-\tau)}}
&\leq & \frac{|y-x|}{t-\tau} e^{-\frac{|y-x^j|^2+\delta^2|y-x|}{4(t-\tau)}} \nonumber \\
& \leq & \frac{|y-x|}{t-\tau} e^{-\delta^2\frac{|y-x|}{(t-\tau)}}e^{-\frac{|y-x^j|^2}{4(t-\tau)}}\nonumber\\
& \leq & Ce^{-\frac{|y-x^j|^2}{4(t-\tau)}},
\end{eqnarray}
here, we have used the basic inequality
$$X^p\leq  C_\delta e^{\delta^2X}, \ \forall X\geq 0, p>0.$$
 Furthermore, we can also derive a general type inequality as follows,
\begin{eqnarray}\label{key est2}
\left(\frac{|y-x|}{t-\tau}\right)^p e^{-\frac{|y-x|^2}{4(t-\tau)}}
\leq & Ce^{-\frac{|y-x^j|^2}{4(t-\tau)}}, \  \forall y\in I_j\cap B^c_2(0), \tau<t, \forall p>0.
\end{eqnarray}
The above arguments also validates Lemma \ref{key0}.

Now for $y\in B_2(0), \tau\leq 0,$ since $(t,x)\in \tilde{Q},$ we immediately have
\begin{eqnarray}\label{key est3}
\left(\frac{|y-x|}{t-\tau}\right)^p e^{-\frac{|y-x|^2}{4(t-\tau)}}
\leq & Ce^{-\frac{|y-x|^2}{4(t-\tau)}}, \  \forall y\in B_2(0), \tau<0, \forall p>0.
\end{eqnarray}
Consequently, for fixed $(t,x)\in \tilde{Q},$  write $x^0=x,$  then from \eqref{key est2}, \eqref{key est3} (with $p=1$) and $$supp f_{Q^c}=\bigcup_{j=1}^{2^n} \left([I_j\cap B_2^c(0)]\times(-\infty,t)\right)\cup \left(B_2(0)\times(-\infty,0)\right),$$ we obtain
 \begin{eqnarray}\label{est-vi}
|v_{x_i}(x,t)|&\leq& C_{n,s}\int_{-\infty}^t \int_{\mathbb{R}^n}  \frac{f_{Q^c}(y,\tau)}{(t-\tau)^{n/2+1-s}} \frac{|x-y|}{2(t-\tau)}e^{-\frac{|x-y|^2}{4(t-\tau)}} dy d\tau\nonumber\\
&\leq& C_{n,s}C\left\{\Sigma_{j=1}^{2^n}\int_{-\infty}^t \int_{B_2^c(0)\cap I_j}  \frac{f(y,\tau)}{(t-\tau)^{n/2+1-s}} e^{-\frac{|x^j-y|^2}{4(t-\tau)}} dy d\tau\right.\nonumber\\
&\quad& \quad\quad\quad+\left.\int_{-\infty}^0 \int_{B_2(0)}  \frac{f(y,\tau)}{(t-\tau)^{n/2+1-s}} e^{-\frac{|x-y|^2}{4(t-\tau)}} dy d\tau\right\}\nonumber\\
&\leq& C\Sigma_{j=0}^{2^n}v(x^j,t)\nonumber\\
&\leq& C\|v\|_{L^{\infty}(Q)}.\end{eqnarray}

Now we estimate the second derivatives of $v$. 
\begin{equation}\label{est-vij}
|v_{x_{i}x_{j}}(x,t)|\leq C_{n,s}\int_{-\infty}^t \int_{\mathbb{R}^n}  \frac{f_{Q^c}(y,\tau)}{(t-\tau)^{n/2+1-s}} \left[\frac{1}{2(t-\tau)} +\frac{|x-y|^2}{4(t-\tau)^2}\right]e^{-\frac{|x-y|^2}{4(t-\tau)}}dy d\tau:=I_1+I_2.
\end{equation}

First we estimate $I_1$ by dividing  the integral region of variable $\tau$ into two parts: $t-\tau>1$ and $t-\tau\leq 1.$

If $t-\tau>1$, then \begin{equation}\label{t-small}\frac{1}{2(t-\tau)} e^{-\frac{|x-y|^2}{4(t-\tau)}}\leq C_0 e^{-\frac{|x-y|^2}{4(t-\tau)}}.\end{equation}

If $t-\tau\leq 1$,   from $supp f_{Q^c}\subset Q^c$, there must be $y\in B_2^c(0)$, then
\[\frac{1}{2(t-\tau)}e^{-\frac{|x-y|^2}{4(t-\tau)}}\leq C\frac{|x-y|}{t-\tau}e^{-\frac{|x-y|^2}{4(t-\tau)}}.\]
Fuethermore, if $y\in I_j \cap B_2^c(0)$, by the key estimate \eqref{est key},
we derive that
\begin{equation}\label{t-large}\frac{1}{2(t-\tau)}e^{-\frac{|x-y|^2}{4(t-\tau)}}\leq  Ce^{-\frac{|y-x^j|^2}{4(t-\tau)}}.
\end{equation}
From \eqref{t-small} and \eqref{t-large}, we deduce
 \begin{eqnarray}\label{est-1}
I_1&\leq& C_{n,s}\int_{-\infty}^t \int_{\mathbb{R}^n}  \frac{f_{Q^c}(y,\tau)}{(t-\tau)^{n/2+1-s}} \frac{1}{2(t-\tau)}e^{-\frac{|x-y|^2}{4(t-\tau)}} dy d\tau\nonumber\\
&=&C_{n,s}\left\{\int_{-\infty}^{t-1}+\int_{t-1}^t\right\} \int_{\mathbb{R}^n}  \frac{f_{Q^c}(y,\tau)}{(t-\tau)^{n/2+1-s}} \frac{1}{2(t-\tau)}e^{-\frac{|x-y|^2}{4(t-\tau)}} dy d\tau\nonumber\\
&\leq& C_{n,s}C_0 \int_{-\infty}^{t-1}\int_{\mathbb{R}^n}  \frac{f_{Q^c}(y,\tau)}{(t-\tau)^{n/2+1-s}} e^{-\frac{|x-y|^2}{4(t-\tau)}} dy d\tau\nonumber\\
\ \ &\quad& \ +\Sigma_{j=1}^{2^n}C_{n,s}C\int_{t-1}^t \int_{B_2^c(0)\cap I_j}  \frac{f_{Q^c}(y,\tau)}{(t-\tau)^{n/2+1-s}} e^{-\frac{|x^j-y|^2}{4(t-\tau)}} dy d\tau\nonumber\\
&\leq& C_{n,s}\{C_0 v(x,t)+C\Sigma_{j=1}^{2^n}v(x^j,t)\}\nonumber\\
&\leq& C\|v\|_{L^{\infty}(Q)}.\end{eqnarray}
In addition, it follows from inequality \eqref{key est2} (with $p=2$) that
\begin{eqnarray}\label{est-11}
I_2&\leq& C_{n,s}\int_{-\infty}^t \int_{\mathbb{R}^n}  \frac{f_{Q^c}(y,\tau)}{(t-\tau)^{n/2+1-s}} \frac{|x-y|^2}{4(t-\tau)^2}e^{-\frac{|x-y|^2}{4(t-\tau)}} dy d\tau\nonumber\\
&\leq& C_{n,s}C\Sigma_{j=1}^{2^n}\int_{-\infty}^t \int_{B_2^c(0)\cap I_j}  \frac{f_{Q^c}(y,\tau)}{(t-\tau)^{n/2+1-s}} e^{-\frac{|x^j-y|^2}{4(t-\tau)}} dy d\tau\nonumber\\.
&\leq& C_{n,s}C\Sigma_{j=1}^{2^n}v(x^j,t)\nonumber\\
&\leq& C\|v\|_{L^{\infty}(Q)}.\end{eqnarray}
Combining \eqref{est-vij}, \eqref{est-1} and \eqref{est-11}, we arrive at
\[v_{x_ix_j}(x,t)\leq C\|v\|_{L^{\infty}(Q)}.\]
This completes the proof of Lemma \ref{thm2.1.1}. \end{proof}

\subsection{Regularity of v in t}

For each fixed $x \in B_1(0)$, we derive

\begin{lemma} \label{thm2.1.2}
There exists a constant $C$ independent of $x\in B_1(0),$ such that
$$
\|v(x,\cdot)\|_{C^{\alpha}_{t} ((1,2))} \leq C \|v(x,\cdot)\|_{L^\infty ((0,3))},\ \ \forall \alpha\in(0,1).
$$
\end{lemma}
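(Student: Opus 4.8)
Here is the plan.

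I would prove the stronger assertion that $t\mapsto v(x,t)$ is Lipschitz on $(1,2)$ with $\sup_{t\in(1,2)}|\partial_t v(x,t)|\le C\|v(x,\cdot)\|_{L^\infty((0,3))}$, the constant $C=C(n,s)$ being independent of $x\in B_1(0)$; since $(1,2)$ has length one a Lipschitz bound forces the $C^\alpha$ bound for every $\alpha\in(0,1)$, and $\|v(x,\cdot)\|_{L^\infty((1,2))}\le\|v(x,\cdot)\|_{L^\infty((0,3))}$ is trivial. Starting from
$$v(x,t)=\int_{-\infty}^t\!\int_{\mathbb R^n}f_{Q^c}(y,\tau)\,G(x-y,t-\tau)\,dy\,d\tau,$$
I differentiate under the integral sign. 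The Leibniz boundary term $\lim_{\tau\to t^-}\int_{\mathbb R^n}f_{Q^c}(y,\tau)G(x-y,t-\tau)\,dy$ vanishes because for $\tau$ close to $t$ (recall $t>1>0$) the slice $f_{Q^c}(\cdot,\tau)$ is supported in $B_2^c(0)$, where $|x-y|\ge1$ makes $G(x-y,t-\tau)=\tfrac{C_{n,s}}{(t-\tau)^{n/2+1-s}}e^{-|x-y|^2/4(t-\tau)}\to0$ as $t-\tau\to0^+$. This gives
$$\partial_t v(x,t)=\int_{-\infty}^t\!\int_{\mathbb R^n}f_{Q^c}(y,\tau)\,\partial_t G(x-y,t-\tau)\,dy\,d\tau,\qquad \partial_t G(z,\rho)=\frac{C_{n,s}}{\rho^{n/2+1-s}}\Big(\frac{|z|^2}{4\rho^2}-\frac{n/2+1-s}{\rho}\Big)e^{-|z|^2/4\rho}.$$

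The heart of the matter is a \emph{temporal} analogue of the kernel estimate in Lemma~\ref{key0}: for all $(x,t)\in\tilde Q$ and all $\tau<t$ with $(y,\tau)\in\operatorname{supp}f_{Q^c}$,
$$\big|\partial_t G(x-y,t-\tau)\big|\le C\,G\big(x-y,(t+1)-\tau\big),\qquad C=C(n,s).$$
To prove it set $\rho=t-\tau>0$, $R=|x-y|$, and use the identity $e^{-R^2/4\rho}=e^{-R^2/4(\rho+1)}\,e^{-R^2/(4\rho(\rho+1))}$; cancelling the common factor $\tfrac{C_{n,s}}{(\rho+1)^{n/2+1-s}}e^{-R^2/4(\rho+1)}$ reduces the claim to bounding $\big(\tfrac{1}{\rho}+\tfrac{R^2}{4\rho^2}\big)\big(\tfrac{\rho+1}{\rho}\big)^{n/2+1-s}e^{-R^2/(4\rho(\rho+1))}$ by a constant. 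If $\rho\ge1$ this is immediate, using $\tfrac{\rho+1}{\rho}\le2$ and $R^2e^{-R^2/c}\le c/e$ with $c=4\rho(\rho+1)$. If $\rho<1$, then $\tau=t-\rho>0$, so the support of $f_{Q^c}$ forces $y\in B_2^c(0)$ and hence $R\ge1$; then $e^{-R^2/(4\rho(\rho+1))}\le e^{-1/(8\rho)}$, and the factor $e^{-1/(8\rho)}$ swallows the polynomial blow–up $\rho^{-m}$ as $\rho\to0^+$. This is exactly the mechanism behind Lemma~\ref{key0} and the proof of Lemma~\ref{thm2.1.1} — a short–time singularity of a kernel derivative that is harmless because the source sits at distance $\gtrsim1$ — only now the ``perturbation'' is a shift of the time variable by $1$ rather than a spatial diagonal shift $\eta_j$.

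Granting the kernel estimate, and using $f_{Q^c}\ge0$ with $(-\infty,t)\subset(-\infty,t+1)$,
$$|\partial_t v(x,t)|\le C\int_{-\infty}^t\!\int_{\mathbb R^n}f_{Q^c}(y,\tau)G(x-y,(t+1)-\tau)\,dy\,d\tau\le C\int_{-\infty}^{t+1}\!\int_{\mathbb R^n}f_{Q^c}(y,\tau)G(x-y,(t+1)-\tau)\,dy\,d\tau=C\,v(x,t+1).$$
Since $t\in(1,2)$ gives $t+1\in(2,3)\subset(0,3)$, this yields $|\partial_t v(x,t)|\le C\,v(x,t+1)\le C\,\|v(x,\cdot)\|_{L^\infty((0,3))}$, uniformly in $x\in B_1(0)$ and $t\in(1,2)$. (For $t$ in a compact subinterval of $(1,2)$ the same estimate, combined with $G(x-y,A)\le(B/A)^{n/2+1-s}G(x-y,B)$ for $A\le B$, produces a single integrable majorant, which retroactively legitimizes the differentiation under the integral.) Lipschitz continuity on the unit-length interval $(1,2)$ gives $[v(x,\cdot)]_{C^\alpha((1,2))}\le\|\partial_t v(x,\cdot)\|_{L^\infty((1,2))}$ for every $\alpha\in(0,1)$, and the lemma follows.

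The step I expect to be the real obstacle is the temporal kernel estimate in the regime $t-\tau\to0^+$: there the factor $\tfrac{|x-y|^2}{(t-\tau)^2}$ in $\partial_t G$ is wildly unbounded, just as $\tfrac{|x-y|}{t-\tau}$ was in the spatial derivative estimates. Making the ``peel off a sliver of Gaussian decay $e^{-c/(t-\tau)}$ because $|x-y|\ge1$'' idea work, and controlling its interplay with the polynomial prefactor $\big(\tfrac{\rho+1}{\rho}\big)^{n/2+1-s}$ which itself blows up as $\rho\to0^+$, is the one genuinely delicate computation; everything else is bookkeeping.
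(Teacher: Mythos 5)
Your argument is correct in substance and reaches the same intermediate goal as the paper --- a uniform bound on $\partial_t v(x,\cdot)$, hence Lipschitz and therefore $C^\alpha$ for every $\alpha\in(0,1)$ --- but the key estimate is genuinely different. The paper never bounds $|\partial_t G|$ by a time-shifted kernel; instead it computes $\partial_t v$ as a weak derivative (difference quotients tested against $\varphi\in C_0^\infty((1,2))$, the splitting into $J_1,J_2$ in \eqref{est-t1}), passes to the limit in $J_1$ by dominated convergence using the spatial directional-perturbation estimate \eqref{key est2} with $p=1,2$ (Lemma \ref{key0}) together with the domination \eqref{delt3}, shows $J_2\to0$, and then bounds the resulting expression by $C\sum_j v(x^j,t)\le C\|v\|_{L^\infty(Q)}$ exactly as in the second-derivative estimate of Lemma \ref{thm2.1.1}. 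You replace the $2^n$ spatial diagonal shifts by a single temporal shift: $|\partial_t G(x-y,t-\tau)|\le C\,G\bigl(x-y,(t+1)-\tau\bigr)$ on $\operatorname{supp}f_{Q^c}$, and your two-case verification is sound (for $t-\tau\ge1$ it is elementary; for $t-\tau<1$ the support condition forces $|x-y|\ge1$, and the peeled-off factor $e^{-|x-y|^2/(4\rho(\rho+1))}\le e^{-1/(8\rho)}$ absorbs both $|x-y|^2/\rho^2$ and the prefactor $((\rho+1)/\rho)^{n/2+1-s}$). This buys two things: the time-derivative estimate needs no multi-point average at all, and it yields $|\partial_t v(x,t)|\le C\,v(x,t+1)$, i.e.\ a bound by the single time slice $\|v(x,\cdot)\|_{L^\infty((0,3))}$, which matches the lemma's statement more tightly than the paper's derivation (which actually produces $C\|v\|_{L^\infty(Q)}$).

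One caveat: your treatment of the interchange of $\partial_t$ with the integral, and in particular the claim that the Leibniz boundary term vanishes because $G(x-y,t-\tau)\to0$ pointwise, is too quick as written. The density $f_{Q^c}$ is not assumed bounded or integrable on its own; it is only integrable against Gaussian weights, and the slice integrals $\int f_{Q^c}(y,\tau)G(x-y,\sigma-\tau)\,dy$ are finite only for a.e.\ $\tau$ (by Tonelli from the finiteness of $v$). The repair is within your own toolkit: for $|x-y|\ge1$ and $\epsilon=t-\tau$ small one has $G(x-y,\epsilon)\le C\,\epsilon^{-(n/2+1-s)}e^{-1/(8\epsilon)}\,G(x-y,t_0-\tau)$ for a fixed $t_0\in(2,3)$, so the boundary contribution is $o(1)$ times a slice of the convergent integral defining $v(x,t_0)$; alternatively, run the computation on difference quotients (or weakly against $C_0^\infty((1,2))$, as the paper does), where your uniform majorant $G(x-y,(t+1)-\tau)\le 2^{n/2+1-s}G(x-y,(b+1)-\tau)$ for $t\in[a,b]\subset(1,2)$ gives dominated convergence and an a.e.\ bound $|\partial_t v(x,\cdot)|\le C\|v(x,\cdot)\|_{L^\infty((0,3))}$, which is all the Lipschitz conclusion requires. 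With that technical repair your proof is complete.
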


\begin{proof} Fixed $x\in B_1(0),$ for any $\varphi\in C_0^\infty((1,2)),$
\begin{eqnarray}\label{est-t}
\begin{aligned}
\int_1^2\frac{\partial v(x,t)}{\partial t}\varphi(t)dt&=-\int_1^2v(x,t)\varphi'(t)dt\\
&=-\int_1^2v(x,t)\left(\lim\limits_{\delta\to 0}\frac{\varphi(t)-\varphi(t-\delta)}{\delta}\right)dt\\
&=-\lim\limits_{\delta\to 0}\frac{1}{\delta}\left\{\int_1^2v(x,t)\varphi(t)dt-\int_1^2v(x,t)\varphi(t-\delta)dt\right\}\\
&=-\lim\limits_{\delta\to 0}\frac{1}{\delta}\left\{\int_1^2v(x,t)\varphi(t)dt-\int_{1-\delta}^{2-\delta}v(x,t+\delta)\varphi(t)dt\right\}\\
&=-\lim\limits_{\delta\to 0}\frac{1}{\delta}\left\{\int_1^2[v(x,t)-v(x,t+\delta)]\varphi(t)dt+\left(\int_{2-\delta}^{2}
-\int_{1-\delta}^{1}\right)v(x,t+\delta)\varphi(t)dt\right\}\\
&=-\lim\limits_{\delta\to 0}\int_1^2\frac{v(x,t)-v(x,t+\delta)}{\delta}\varphi(t)dt.
\end{aligned}
\end{eqnarray}
Here we have utilized the fact that
\begin{equation}\label{small-term}\lim\limits_{\delta\to 0}\frac{1}{\delta}\int_{2-\delta}^{2}v(x,t+\delta)\varphi(t)dt=\lim\limits_{\delta\to 0}\frac{1}{\delta}\int_{1-\delta}^{1}v(x,t+\delta)\varphi(t)dt=0.\end{equation}

Indeed, by the {\em mean value theorem for integrals},   the boundedness of $v(x,t)$ and the fact that $\varphi\in C_0^\infty((1,2)),$ we have
\[\lim\limits_{\delta\to 0}\frac{1}{\delta}\int_{2-\delta}^{2}v(x,t+\delta)\varphi(t)dt=\lim\limits_{\xi\to 2}v(x,\xi)\varphi(\xi)=0.\]
Similarly,
\[\lim\limits_{\delta\to 0}\frac{1}{\delta}\int_{1-\delta}^{1}v(x,t+\delta)\varphi(t)dt=\lim\limits_{\xi\to 1}v(x,\xi)\varphi(\xi)=0.\]
These imply \eqref{small-term}.

Now for any fixed $(x,t)\in B_1(0)\times(1,2):=\tilde{Q}$, substituting the expression
\begin{equation}v(x,t)=\int_{-\infty}^t \int_{\mathbb{R}^n} (f_{Q^c}(y,\tau) G(x-y, t-\tau) dy d\tau\end{equation}
into the right hand side  of \eqref{est-t}, we derive
\begin{eqnarray}\label{est-t1}
\begin{aligned}
&\int_1^2\frac{v(x,t+\delta)-v(x,t)}{\delta}\varphi(t)dt\\
=&\frac{1}{\delta}\int_1^2\left\{\int_{-\infty}^{t+\delta}\int_{\R^n}f_{Q^c}(y,\tau)G(x-y,t+\delta-\tau)dyd\tau
-\int_{-\infty}^{t}\int_{\R^n}f_{Q^c}(y,\tau)G(x-y,t-\tau)dyd\tau\right\}\varphi(t)dt\\
=&\int_1^2\int_{-\infty}^{t}\int_{\R^n}f_{Q^c}(y,\tau)\frac{G(x-y,t+\delta-\tau)-G(x-y,t-\tau)}{\delta}dyd\tau\varphi(t)dt\\
&+\frac{1}{\delta}\int_1^2\int_{t}^{t+\delta}\int_{\R^n}f_{Q^c}(y,\tau)G(x-y,t+\delta-\tau)dyd\tau\varphi(t)dt\\
:=&J_1+J_2.\end{aligned}
\end{eqnarray}

First, we want show that $J_1$  is equivalent to the differentiation of $v(x,t)$ with respect to $t$ within the integral sign, that is,
\begin{eqnarray}\label{J1-lim}
\begin{aligned}
J_1\to&\int_1^2\int_{-\infty}^{t}\int_{\R^n}f_{Q^c}(y,\tau)\frac{\partial G}{\partial t} (x-y,t-\tau)dyd\tau\varphi(t)dt, \ \mb{as}\ \ \delta\to 0.
\end{aligned}
\end{eqnarray}

 In fact, applying the Cauchy mean value theorem, we obtain
\begin{eqnarray*}
\begin{aligned}J_1=\int_1^2\int_{-\infty}^{t}\int_{\R^n}f_{Q^c}(y,\tau)\frac{\partial G}{\partial t}(x-y,t+\delta'-\tau)dyd\tau\varphi(t)dt, \delta'\in(0,\delta).\end{aligned}
\end{eqnarray*}
Differentiate $G(x,t)$ with respect to $t$,
\begin{equation}\label{grn-t}\frac{\partial G(x,t)}{\partial t}
=\frac{C_{n,s}}{{t}^{n/2+1-s}} e^{-\frac{|x|^2}{4t}}(\frac{|x|^2}{4t^2}-\frac{\frac{n}{2}+1-s}{t}).\end{equation}
Then for $\tau<t, 0<\delta'<\delta,$
\begin{eqnarray}\label{delt}
\begin{aligned}\left|\frac{\partial G}{\partial t}(x-y,t+\delta'-\tau)\right|\leq \frac{C e^{-\frac{|x-y|^2}{4(t+\delta'-\tau)}}}{{(t+\delta'-\tau)}^{n/2+1-s}}\left(\frac{|x-y|^2}{(t+\delta'-\tau)^2}+\frac{1}{t+\delta'-\tau}\right)\end{aligned}
\end{eqnarray}

Similar to the argument as in deriving the estimate of $v_{x_i}$, we estimate the right hand side of  \eqref{delt} in two possible cases.

Case 1. $y\in B_2^c(0),$ then $|y-x|\geq 1.$ We can choose $x^j$ as above such that for each $j=1,2,...2^n,$ if $y\in I_j,$ by \eqref{key est2}(with $p=1,2$), it holds
\begin{eqnarray}\label{delt1}
\begin{aligned}e^{-\frac{|x-y|^2}{4(t+\delta'-\tau)}}\left(\frac{|x-y|^2}{(t+\delta'-\tau)^2}+\frac{1}{t+\delta'-\tau}\right)\leq Ce^{-\frac{|x^j-y|^2}{4(t+\delta'-\tau)}}.
\end{aligned}
\end{eqnarray}

Case 2. $y\in B_2(0), \tau\leq 0,$ then
\[|x-y|\leq 3, t+\delta'-\tau\geq1.\]
Therefore,
\begin{eqnarray}\label{delt2}
\begin{aligned}e^{-\frac{|x-y|^2}{4(t+\delta'-\tau)}}\left(\frac{|x-y|^2}{(t+\delta'-\tau)^2}+\frac{1}{t+\delta'-\tau}\right)\leq Ce^{-\frac{|x-y|^2}{4(t+\delta'-\tau)}}.
\end{aligned}
\end{eqnarray}

Substituting \eqref{delt1} and \eqref{delt2} into \eqref{delt}, we deduce
\begin{eqnarray}\label{delt3}
\begin{aligned}\left|\frac{\partial G}{\partial t}(x-y,t+\delta'-\tau)\right|&\leq C\sum\limits_{j=0}^{2^n}\frac{e^{-\frac{|x^j-y|^2}{4(t+\delta'-\tau)}}}{{(t+\delta'-\tau)}^{n/2+1-s}}\\
&\leq  C\sum\limits_{j=0}^{2^n} G(x^j-y,t+\delta'-\tau)\\
&\leq  C\sum\limits_{j=0}^{2^n}\{ G(x^j-y,t-\tau)+ G(x^j-y,t+1-\tau)\},\end{aligned}
\end{eqnarray}
{with} $x^0=x$.
In addition,
\begin{eqnarray}\label{delt4}
\begin{aligned}&\int_1^2\int_{-\infty}^{t}\int_{\R^n}f_{Q^c}(y,\tau)\sum\limits_{j=0}^{2^n}\{ G(x^j-y,t-\tau)+ G(x^j-y,t+1-\tau)\}dyd\tau\varphi(t)dt\\
&\leq C\left(\sum\limits_{j=0}^{2^n}\int_1^2{v(x^j,t+1)\varphi(t)dt+\int_1^2v(x^j,t)}\varphi(t)dt\right)\\
&\leq C \|v\|_{L^{\infty}(Q)}\leq C.\end{aligned}
\end{eqnarray}

Therefore, taking into account of \eqref{delt3}, \eqref{delt4}, and by the {\em Lebesgue's dominated convergence theorem}, we verify \eqref{J1-lim}.  This implies that, as $\delta\to 0,$
\begin{eqnarray}\label{est-J1}
\begin{aligned}
J_1\to&\int_1^2\int_{-\infty}^{t}\int_{\R^n}\frac{C_{n,s}f_{Q^c}(y,\tau)}{{(t-\tau)}^{n/2+1-s}} e^{-\frac{|x-y|^2}{4(t-\tau)}}\left[\frac{|x-y|^2}{4(t-\tau)^2}-\frac{\frac{n}{2}+1-s}{t-\tau}\right]dyd\tau\varphi(t)dt.\\
\end{aligned}
\end{eqnarray}

Next we claim \begin{equation}\label{est-J2}J_2\to 0, \ \mb{as}\ \ \delta\to 0.\end{equation}

Indeed, for any $0<\delta<1,$ if $\tau\in(0,\delta)$, then $\tau+t\in(0,3)$, by $supp f_{Q^c}\subset Q^c$,
\[D:=\{y\in\R^n:f_{Q^c}(y,t+\tau)\neq 0\}\subset B_2^c(0).\]
 Now for $\forall y\in D$, $|y-x|\geq 1$,  by equality \eqref{grn-t}, we observe that $G(|x-y|,\delta)$ is monotone increasing in $\delta$ for $\delta\ll1$.  
Thus, for sufficiently small $\delta>0,$ we obtain
\begin{eqnarray}\label{est-t2}
\begin{aligned}
J_2&=\frac{1}{\delta}\int_1^2\int_{t}^{t+\delta}\int_{\R^n}f_{Q^c}(y,\tau)G(x-y,t+\delta-\tau)dyd\tau\varphi(t)dt\\
&=\frac{1}{\delta}\int_1^2\int_{0}^{\delta}\int_{\R^n}f_{Q^c}(y,\tau+t)G(x-y,\delta-\tau)dyd\tau\varphi(t)dt\\
&\leq\frac{1}{\delta}\int_1^2\int_{0}^{\delta}\int_{\R^n}f_{Q^c}(y,\tau+t)G(x-y,\delta)dyd\tau\varphi(t)dt\\
&=\frac{1}{\delta}\int_{0}^{\delta}\int_{\R^n}\int_1^2f_{Q^c}(y,\tau+t)G(x-y,\delta)\varphi(t)dtdyd\tau\\
&=\frac{1}{\delta}\int_{0}^{\delta}\int_{\R^n}\int_{1+\tau}^{2+\tau}f_{Q^c}(y,t)G(x-y,\delta)\varphi(t-\tau)dtdyd\tau\\
&\leq \sup_{t\in(1,2)}\varphi(t)\frac{1}{\delta}\int_{0}^{\delta}\int_{\R^n}\int_{1}^{\frac{5}{2}}f_{Q^c}(y,t)G(x-y,\delta)dtdyd\tau\\
&\leq C\int_{\R^n}\int_{1}^{\frac{5}{2}}f_{Q^c}(y,t)G(x-y,\delta)dtdy:=J_3.\\
\end{aligned}
\end{eqnarray}
Therefore, it suffices to prove
\begin{equation}\label{est-J3}J_3=C\int_{\R^n}\int_{1}^{\frac{5}{2}}f_{Q^c}(y,\tau)G(x-y,\delta)d\tau dy\to 0, \ \mb{as}\ \ \delta\to 0.\end{equation}

In fact,
\[f_{Q^c}(y,\tau)G(x-y,\delta)=f_{Q^c}(y,\tau)\frac{e^{-\frac{|x-y|^2}{4\delta}}}{{\delta}^{n/2+1-s}}\to 0 \ a.e.\ \mb{as}\ \ \delta\to 0. \]
 Furthermore, there exists a point $t_0\in (\frac{5}{2},3)$ such that for any $\tau\in(1,\frac{5}{2})$ and sufficiently small $\delta>0$, it holds
 \[f_{Q^c}(y,\tau)G(x-y,\delta)\leq f_{Q^c}(y,\tau)G(x-y,t_0-\tau)\]
 and \begin{eqnarray*}
\begin{aligned}
&\int_{\R^n}\int_1^3f_{Q^c}(y,\tau)G(x-y,t_0-\tau)d\tau dy\\
\leq &\int_{\R^n}\int_{-\infty}^{t_0}f_{Q^c}(y,\tau)G(x-y,t_0-\tau)d\tau dy\\
=&v(x,t_0)\leq \|v\|_{L^{\infty}(Q)}.\end{aligned}
\end{eqnarray*}
Hence, by the {\em Lebesgue's dominated convergence theorem}, \eqref{est-J3} is valid.

Together \eqref{est-t2} with \eqref{est-J3}, we verified claim \eqref{est-J2}.

Then combining \eqref{est-t}, \eqref{est-t1},\eqref{est-J1} and \eqref{est-J2}, we arrive at
\[\int_1^2\frac{\partial v(x,t)}{\partial t}\varphi(t)dt=C_{n,s}\int_1^2\int_{-\infty}^{t}\int_{\R^n}\frac{f_{Q^c}(y,\tau)}{{(t-\tau)}^{n/2+1-s}} e^{-\frac{|x-y|^2}{4(t-\tau)}}\left[\frac{|x-y|^2}{4(t-\tau)^2}-\frac{\frac{n}{2}+1-s}{t-\tau}\right]dyd\tau\varphi(t)dt,\]
which implies that
\[\frac{\partial v(x,t)}{\partial t}=C_{n,s}\int_{-\infty}^{t}\int_{\R^n}\frac{f_{Q^c}(y,\tau)}{{(t-\tau)}^{n/2+1-s}} e^{-\frac{|x-y|^2}{4(t-\tau)}}\left[\frac{|x-y|^2}{4(t-\tau)^2}-\frac{\frac{n}{2}+1-s}{t-\tau}\right]dyd\tau, a.e. (x,t)\in \tilde{Q}.\]
Now similar to the argument for the estimate of $v_{x_ix_j}$ in \eqref{est-vij}, we deduce that the weak derivative of $v(x,t)$ with respect to the variable $t$ is controlled by $\|v\|_{L^{\infty}(Q)}$, that is,
\[\Bigg|\frac{\partial v(x,t)}{\partial t}\Bigg|\leq C\|v\|_{L^{\infty}(Q)}.\]
Therefore, for any fixed $x\in B_1(0),$ $v(x,\cdot)\in W^{1,\infty}((1,2)),$  by the sobolev  embedding theorem, we deduce that $v(x,\cdot)\in C_t^{0,1}((1,2))\subset C_t^{\alpha}((1,2)),  \, \forall \alpha\in(0, 1),$ and
\[\|v(x,\cdot)\|_{C_t^{\alpha}((1,2))}\leq C \|v(x,\cdot)\|_{L^{\infty}((0,3))}.\]
This completes the proof of Lemma \ref{thm2.1.2}. \end{proof}
\medskip

Now Theorem \ref{thm2.1} is a direct consequence of Lemma \ref{thm2.1.1} and Lemma \ref{thm2.1.2}.

\section{Estimates of the nonhomogeneous part w(x,t)}

As in the previous section, we still denote
$$ Q \equiv B_2(0) \times (0,3), \mbox{ and } \; \; \tilde{Q} = B_1(0) \times (1,2) \subset \subset Q.$$

Let
$$ f_Q (x,t) = \left\{ \begin{array}{ll} f(x,t), & (x,t) \in Q \\
0, & (x, t) \in Q^c,
\end{array}
\right.
$$

and
$$w(x,t) = \int_{-\infty}^t \int_{\mathbb{R}^n} f_Q(y,\tau) G(x-y, t-\tau) dy d\tau$$
where
$$G(x-y, t-\tau) = \frac{C_{n,s}}{(t-\tau)^{n/2+1-s}} e^{\frac{|x-y|^2}{4(t-\tau)}}$$

Since $w(x,t)$ satisfies the nonhomogeneous equation
$$(\partial_t - \Delta)^s w(x,t) = f(x,t),$$
we refer it as the nonhomogeneous part.
\smallskip

In this section, we estimate H\"{o}lder and Schauder norms of $w(x,t)$ on $\tilde{Q}$ in terms of $L^\infty$ and H\"{o}lder norms of $f(x,t)$ on the larger cylinder $Q$. We prove

\begin{theorem} \label{thm3.1}  (H\"{o}lder Estimates) Assume that $f$ is bounded in $Q$. Then there exists a positive constant $C$, such that
$$
\|w\|_{C^{2s, s}_{x, t} (\tilde{Q})} \leq C \|f\|_{L^\infty (Q)}, \;\; \mbox{ if } s \neq 1/2.
$$
$$
\|w\|_{C^{log L, s}_{x, t} (\tilde{Q})} \leq C \|f\|_{L^\infty (Q)}, \;\; \mbox{ if } s = 1/2.
$$
\end{theorem}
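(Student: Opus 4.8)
The plan is to use the explicit space--time convolution structure $w = G * f_Q$ together with elementary $L^1$ ``mass'' bounds for the derivatives of the kernel. Reading off the formulas for $\partial_{x_i}G$, $\partial_{x_ix_j}G$ and $\partial_t G$ recorded in Section 3, and using $\int_{\R^n}e^{-|x|^2/4\sigma}\,dx=(4\pi\sigma)^{n/2}$, one checks that for every $\sigma>0$
$$\int_{\R^n}G(x,\sigma)\,dx=\frac{\sigma^{s-1}}{|\Gamma(-s)|},\qquad \int_{\R^n}|D_x^k G(x,\sigma)|\,dx\le C\,\sigma^{s-1-k/2}\ \ (k=1,2),\qquad \int_{\R^n}|\partial_\sigma G(x,\sigma)|\,dx\le C\,\sigma^{s-2}.$$
Since $f_Q$ is supported in $Q=B_2(0)\times(0,3)$ while $(x,t)\in\tilde Q=B_1(0)\times(1,2)$, in the integral defining $w$ the variable $\sigma:=t-\tau$ only runs over $(0,2)$; because $s>0$, $\int_0^2\sigma^{s-1}\,d\sigma<\infty$, which already yields $\|w\|_{L^\infty(\tilde Q)}\le C\|f\|_{L^\infty(Q)}$.

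\emph{Regularity in $x$ (fixed $t\in(1,2)$).} Put $r=|x_1-x_2|$ and split the $\tau$--integral at $t-\tau=r^2$. On $\{t-\tau\le r^2\}$ bound $|G(x_1-y,\sigma)-G(x_2-y,\sigma)|\le G(x_1-y,\sigma)+G(x_2-y,\sigma)$ and integrate: this contributes $\le C\int_0^{r^2}\sigma^{s-1}\,d\sigma=Cr^{2s}$. On $\{t-\tau>r^2\}$ write $G(x_1-y,\sigma)-G(x_2-y,\sigma)=\int_0^1\nabla_x G(x_2-y+\theta(x_1-x_2),\sigma)\cdot(x_1-x_2)\,d\theta$ and integrate in $y$ (translation invariance of the mass bound) to get $\le Cr\int_{r^2}^2\sigma^{s-3/2}\,d\sigma$. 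The conclusion now branches: if $0<s<1/2$ the lower endpoint dominates, $\int_{r^2}^2\sigma^{s-3/2}\,d\sigma\le Cr^{2s-1}$, so this term is also $\le Cr^{2s}$ and $w(\cdot,t)\in C^{2s}_x$; if $s=1/2$ one gets $r\int_{r^2}^2\sigma^{-1}\,d\sigma\le Cr(1+|\log r|)$, i.e. the Log--Lipschitz modulus; if $1/2<s<1$ one first justifies $\nabla_x w(x,t)=\int f_Q(y,\tau)\nabla_x G(x-y,t-\tau)\,dy\,d\tau$ (differentiation under the integral being legitimate because $\int_0^2\sigma^{s-3/2}\,d\sigma<\infty$ precisely when $s>1/2$) and re-runs the two--region argument on $\nabla_x w$, using $\int|\nabla_x G(\cdot,\sigma)|\sim\sigma^{s-3/2}$ on the near part and $\int|D_x^2 G(\cdot,\sigma)|\sim\sigma^{s-2}$ on the far part; this gives $|\nabla_x w(x_1,t)-\nabla_x w(x_2,t)|\le C\|f\|_{L^\infty(Q)}r^{2s-1}$, i.e. $w(\cdot,t)\in C^{1+(2s-1)}_x=C^{2s}_x$.

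\emph{Regularity in $t$ (fixed $x\in B_1(0)$).} For $t_1<t_2$ with $h=t_2-t_1$, split $w(x,t_2)-w(x,t_1)$ into the new time--slab $\int_{t_1}^{t_2}\!\int f_Q\,G(x-y,t_2-\tau)\,dy\,d\tau$, which is $\le\|f\|_{L^\infty}\int_0^h\frac{\sigma^{s-1}}{|\Gamma(-s)|}\,d\sigma=C\|f\|_{L^\infty}h^s$, and the remainder $\int_{-\infty}^{t_1}\!\int f_Q\,[G(x-y,t_2-\tau)-G(x-y,t_1-\tau)]\,dy\,d\tau$. Setting $\sigma_1=t_1-\tau\in(0,2)$ and using $|G(x-y,\sigma_1+h)-G(x-y,\sigma_1)|\le\int_{\sigma_1}^{\sigma_1+h}|\partial_\sigma G(x-y,\sigma)|\,d\sigma$ with $\int|\partial_\sigma G(\cdot,\sigma)|\,dx\le C\sigma^{s-2}$, the remainder is $\le C\int_0^2\!\int_{\sigma_1}^{\sigma_1+h}\sigma^{s-2}\,d\sigma\,d\sigma_1$; splitting into $\sigma_1<h$ and $\sigma_1\ge h$ and using $s<1$ (integrability of $\sigma^{s-2}$ at $\infty$) bounds this by $Ch^s$. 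Hence $w(x,\cdot)\in C^s_t$ uniformly in $x$, and when $1/2<s<1$ the identical scheme applied to $\nabla_x w$, with $\int|\nabla_x G|\sim\sigma^{s-3/2}$ and $\int|\partial_\sigma\nabla_x G|\sim\sigma^{s-5/2}$, gives $\nabla_x w(x,\cdot)\in C^{s-1/2}_t$ --- the remaining ingredient in the definition of $C^{2s,s}_{x,t}$ in that range. Combining the spatial and temporal bounds via $|w(x,t)-w(y,\tau)|\le|w(x,t)-w(y,t)|+|w(y,t)-w(y,\tau)|$ (and the analogous inequality for $\nabla_x w$) produces the joint parabolic norm estimate in all three regimes $2s<1$, $2s=1$, $2s>1$.

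The delicate point is purely the borderline bookkeeping: every kernel estimate carries a power $\sigma^{s-1-k/2}$ whose integrability near $\sigma=0$ switches exactly at the thresholds $s=1/2$ (the $\nabla_x$ level) and $s=1$ (the $D_x^2$ level, excluded here), so the three cases must be separated with care and, at $s=1/2$, the logarithm must be retained rather than absorbed into a smaller Hölder exponent. A secondary technical issue is justifying the differentiation under the integral sign defining $\nabla_x w$ (and its continuity) when $1/2<s<1$, which again hinges on the same integrability threshold. Everything else is an exercise in the two routine Gaussian integrals $\int|x|^k e^{-|x|^2/4\sigma}\,dx = C\sigma^{(n+k)/2}$.
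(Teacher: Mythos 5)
Your argument is correct, and it takes a genuinely different route from the paper's. The paper proves the spatial estimates (Lemmas \ref{lem4.2}, \ref{lem4.3}) by splitting the $y$-integral around the midpoint $\eta$ of $x,\bar{x}$ into $B_\delta(\eta)$ and $B_3(\eta)\setminus B_\delta(\eta)$, applying the Mean Value Theorem on the far spatial region, and converting each time integral via the substitution $\hat\tau=|x-y|^2/4(t-\tau)$ into Riesz-type kernels $|x-y|^{-(n-2s)}$, $|x-y|^{-(n+1-2s)}$, which are then integrated in $y$; you instead split the \emph{time} integral at $t-\tau=r^2$ with $r=|x_1-x_2|$ and use only the translation-invariant $L^1_y$ mass bounds $\int|D_x^kG(\cdot,\sigma)|\,dy\le C\sigma^{s-1-k/2}$, reducing everything to one-dimensional integrals of powers of $\sigma$. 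Your scheme is more streamlined: the thresholds $s=1/2$ and the logarithmic borderline fall out of a single integrability criterion, you avoid the midpoint bookkeeping and the equivalence $|\xi-y|\sim|\eta-y|$, and you explicitly supply the $(s-\tfrac12)$-H\"older continuity in $t$ of $\nabla_x w$ when $2s>1$, which is part of the parabolic $C^{2s,s}_{x,t}$ norm as defined in Section 2 but is not spelled out among the paper's Lemmas \ref{lem4.2}, \ref{lem4.3}, \ref{lem4.7} (your temporal argument, splitting off the new slab $(t_1,t_2)$ and using $\int|\partial_\sigma G|\,dy\le C\sigma^{s-2}$, parallels the paper's Lemma \ref{lem4.7} in spirit). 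What the paper's space-splitting buys, by contrast, is that it is the formulation which carries over directly to the Schauder estimates of Theorem \ref{thm3.2}, where one must exploit the cancellation $|f_Q(y,\tau)-f_Q(x,\tau)|\le C|x-y|^\alpha$ and the symmetry identities on $B_3(\eta)$ or $B_\delta(\eta)$ — a pointwise-in-$y$ gain that your pure $L^1$ mass bounds do not see. Two small points to keep tidy: justify differentiation under the integral sign for $\nabla_x w$ when $s>1/2$ (difference quotients dominated by $C\sigma^{s-3/2}\in L^1(0,2)$, as you indicate), and note that the case $r^2\ge 2$ in your time-splitting is trivial since $\|w\|_{L^\infty(\tilde Q)}\le C\|f\|_{L^\infty(Q)}$ and $r\le 2$ on $B_1(0)$.
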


\begin{theorem} \label{thm3.2} (Schauder Estimates) Assume that $f \in C^{\alpha, \alpha/2}_{x,t}  (Q)$ for some $0 < \alpha < 1$. Then there exists a positive constant $C$, such that
$$
\|w\|_{C^{2s+ \alpha, s+\alpha/2}_{x, t} (\tilde{Q})} \leq C \|f\|_{C^{\alpha, \alpha/2}_{x,t}  (Q)}, \; \; \; \mbox{ if } \; 2s +\alpha < 1 \mbox{ or } 1< 2s +\alpha < 2;
$$
$$
\|w\|_{C^{\log L, s+\alpha/2}_{x, t} (\tilde{Q})} \leq C \|f\|_{C^{\alpha, \alpha/2}_{x,t}  (Q)}, \; \; \; \mbox{ if } \; 2s + \alpha = 1;
$$
$$
\|w\|_{C^{1+\log L, \log L}_{x, t} (\tilde{Q})} \leq C \|f\|_{C^{\alpha, \alpha/2}_{x,t}  (Q)}, \;\;\; \mbox{ if } \; 2s + \alpha = 2.
$$
\end{theorem}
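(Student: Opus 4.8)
The plan is to estimate the potential
$$w(x,t) = \int_{-\infty}^t \int_{\mathbb{R}^n} f_Q(y,\tau)\, G(x-y,t-\tau)\, dy\, d\tau$$
by a standard potential-estimate strategy, but carried out directly on the explicit Gaussian-type kernel $G(z,\sigma) = C_{n,s}\sigma^{-(n/2+1-s)} e^{-|z|^2/4\sigma}$ rather than through the extension method. The crucial fact making all of this work is that $f_Q$ is supported in the bounded cylinder $Q$, so $\sigma = t-\tau$ effectively ranges only over $(0, C)$ with a finite upper bound when $(x,t)\in\tilde Q$; the far-in-time part of the integral is harmless because $G(z,\sigma)$ and all its derivatives decay like $\sigma^{-(n/2+1-s)}$ in $\sigma$ while $f_Q$ integrates to a finite quantity in that region. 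For the Hölder estimate (Theorem \ref{thm3.1}) I would first bound $\|w\|_{L^\infty(\tilde Q)}$ by $\|f\|_{L^\infty(Q)}$ via $\int_0^\infty\!\int_{\mathbb{R}^n} G(z,\sigma)\,dz\,d\sigma$ restricted appropriately (using $\int_{\mathbb{R}^n}G(z,\sigma)\,dz = c\,\sigma^{s-1}$, which is integrable near $\sigma=0$ since $s>0$, and cutting off the large-$\sigma$ tail by support). Then I would estimate the spatial increment $|w(x_1,t)-w(x_2,t)|$ and the time increment $|w(x,t_1)-w(x,t_2)|$ by splitting the $\sigma$-integral at the scale $\sigma\sim|x_1-x_2|^2$ (resp. $\sigma\sim|t_1-t_2|$): on the small-$\sigma$ piece one bounds each term separately, on the large-$\sigma$ piece one uses the mean value theorem together with the derivative bounds $|D_z G|\lesssim \sigma^{-1/2}G(\cdot,2\sigma)$ type estimates and $|\partial_\sigma G|\lesssim \sigma^{-1} G(\cdot,2\sigma)$ type estimates — precisely the kind of kernel inequalities packaged in Lemma \ref{key0}. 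The integrals that appear are powers $\int_0^{c}\sigma^{s-1-\beta}\,d\sigma$ and these converge exactly when the target exponent is below the critical threshold, producing a clean power of $|x_1-x_2|$ when $2s\neq 1$ and the logarithmic ($C^{\log L}$) loss exactly at $2s=1$, where $\int \sigma^{-1}\,d\sigma$ diverges logarithmically.

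For the Schauder estimate (Theorem \ref{thm3.2}) I would use the extra Hölder regularity of $f$ to gain an order of smoothness. The standard device is to subtract a constant: write
$$w(x,t) = \int\!\!\int \big(f_Q(y,\tau)-f(x,t)\big) G(x-y,t-\tau)\,dy\,d\tau + f(x,t)\cdot\Big(\int\!\!\int G\Big),$$
or, when $2s+\alpha>1$ and one needs to control $\nabla_x w$, subtract a first-order Taylor polynomial of $f$ in the $x$-variable at the base point; the oscillation $|f_Q(y,\tau)-f(x,t)| \lesssim \|f\|_{C^{\alpha,\alpha/2}}(|x-y|+|t-\tau|^{1/2})^\alpha$ then supplies an extra factor $(|z|+\sigma^{1/2})^\alpha$ inside the kernel integral, which against $|D^k G|$ improves the $\sigma$-exponent by $\alpha$ and pushes the convergence threshold from $2s$ up to $2s+\alpha$. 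The three cases in the statement correspond to $2s+\alpha<1$ (plain Hölder in $x$), $1<2s+\alpha<2$ (Lipschitz/$C^{1,\gamma}$ in $x$, where $\nabla_x w$ exists and is estimated as above), and the two borderline values $2s+\alpha=1$ and $2s+\alpha=2$ where the relevant $\sigma$-integral is logarithmically divergent, giving the $C^{\log L}$ and $C^{1+\log L}$ conclusions respectively. Throughout, I would handle the time regularity by the same splitting with the $\partial_\sigma G$ bound, noting that the parabolic scaling $\sigma\leftrightarrow|z|^2$ converts a spatial order $\theta$ into a temporal order $\theta/2$, which is why the time exponents in the statement are half the spatial ones.

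The main obstacle — and the reason for Lemma \ref{key0} — is controlling the derivatives of the kernel in terms of the kernel itself uniformly over the whole integration region. For fixed base point $(x,t)\in\tilde Q$ and $(y,\tau)$ ranging over $Q^c$ this was the content of the $v$-estimates in Section 3; here for $w$, the base point and $(y,\tau)$ both lie in the bounded set $Q$, so $|x-y|$ is bounded, but $\sigma=t-\tau$ can be arbitrarily small, and then $|D_z G(z,\sigma)|$ carries unbounded factors like $|z|/\sigma$ and $|z|^2/\sigma^2$ from differentiating the Gaussian. The resolution is the elementary absorption inequality $X^p e^{-X/4}\le C_p e^{-X/8}$ with $X=|z|^2/\sigma$: each dangerous polynomial factor $(|z|^2/\sigma)^{p/2}$ is swallowed into the exponential at the cost of halving its rate, i.e. $|D^k G(z,\sigma)|\le C\sigma^{-k/2}G(z,2\sigma)$ (modulo constants), after which the remaining $\sigma^{-k/2}$ is exactly the scaling factor one needs. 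Making this bookkeeping precise across the small-$\sigma$/large-$\sigma$ split, and checking that the borderline integrals produce exactly the logarithmic moduli of continuity claimed (rather than, say, a genuine divergence), is where the real work lies; the large-$\sigma$ tail and the $L^\infty$ bound are routine by comparison.
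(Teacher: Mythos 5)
Your overall strategy is the same as the paper's: estimate the potential $w$ directly from the explicit kernel, split near/far at the scale $\delta=|x-\bar x|$, subtract the value of $f$ at the base point to gain the extra factor $\alpha$, read off the logarithmic moduli at the borderline values $2s+\alpha\in\{1,2\}$, and treat the time variable by the parallel argument with half exponents (this is how the paper proceeds in Lemmas \ref{lem4.4}--\ref{lem4.6} and \ref{lem4.9}). The $L^\infty$ bound, the case $2s+\alpha<1$, and the time estimates are indeed routine along the lines you describe.

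There is, however, a genuine gap at exactly the step that makes the range $1<2s+\alpha\le 2$ (and, implicitly, $2<2s+\alpha<3$) harder than the sub-critical case. You propose to ``subtract a first-order Taylor polynomial of $f$ in the $x$-variable at the base point,'' but $f$ is only $C^{\alpha}$ in $x$ with $\alpha<1$, so no such polynomial exists; the only admissible subtraction is the zeroth-order value $f(x,\tau)$ (resp. $f(\bar x,\tau)$). Once you accept that, two problems appear which your plan never addresses. First, comparing $\partial_{x_i}w$ at the two points forces you to subtract \emph{different} constants at $x$ and $\bar x$, leaving a mismatch term of the form $\bigl(f(x,\tau)-f(\bar x,\tau)\bigr)\int\partial_{x_i}G(x-y,t-\tau)\,dy$, which must be shown to contribute only $O(\delta^{2s+\alpha-1})$; the mechanism for this is a cancellation property of the kernel derivative --- either the vanishing moment $\int_{\mathbb{R}^n}\partial_{x_i}G(z,\sigma)\,dz=0$, or, as the paper does, the divergence theorem turning the volume integral into a boundary integral combined with the symmetry of the region about the midpoint $\eta$ (see \eqref{3.10b} and the term $I_3$ in the proof of Lemma \ref{lem4.5}). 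Second, if instead one naively pairs the oscillation $|f(y,\tau)-f(x,\tau)|\lesssim|x-y|^{\alpha}$ with the kernel singular at $\bar x$, one lands on $\int_{B_\delta(\eta)}|x-y|^{\alpha}|\bar x-y|^{-(n+1-2s)}\,dy$, which is precisely the integral the paper flags as failing in \eqref{3.23}; this is why Lemma \ref{lem4.5} re-decomposes the near-region contribution into $I_1+I_2+I_3$ so that each increment of $f$ is paired with the kernel centered at the same point. Your $\sigma$-splitting at $\sigma\sim\delta^2$ can in fact be pushed through, but only after the cancellation identity for $\partial_{x_i}G$ is invoked to dispose of the subtracted constants; as written, the proposal is silent on this point, and that is the missing idea rather than mere bookkeeping.
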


\subsection{The bounded-ness of w}

We first show
\begin{lemma} \label{lem4.1}
Assume that $f(x,t)$ is bounded in $Q$. Then  there exists a constant $C$, independent of $x \in B_1(0)$ and $t \in (1,2)$, such that
\begin{equation}
|w(x,t)| \leq C \|f\|_{L^\infty (Q)}.
\label{Ewx0}
\end{equation}
\end{lemma}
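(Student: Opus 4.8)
The plan is to estimate $w$ pointwise by pulling $\|f\|_{L^\infty(Q)}$ out of the integral and then showing that the remaining kernel integral is bounded by a constant depending only on $n$ and $s$. Since $f_Q$ is supported in $Q = B_2(0)\times(0,3)$ and the kernel $G(x-y,t-\tau) = C_{n,s}(t-\tau)^{-(n/2+1-s)}e^{-|x-y|^2/(4(t-\tau))}$ is nonnegative, for any $(x,t)\in\tilde{Q}$ I would first write
$$|w(x,t)| \le \|f\|_{L^\infty(Q)} \int_{-\infty}^t \int_{\mathbb{R}^n} \mathbf{1}_Q(y,\tau)\, G(x-y,t-\tau)\, dy\, d\tau \le \|f\|_{L^\infty(Q)} \int_0^t \int_{\mathbb{R}^n} G(x-y,t-\tau)\, dy\, d\tau,$$
where in the last step the spatial cutoff $y\in B_2(0)$ is simply discarded (the integrand is positive) and the $\tau$-support of $f_Q$ together with $t<2<3$ forces $\tau\in(0,t)$.

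Next I would compute the spatial integral explicitly. By the standard Gaussian identity $\int_{\mathbb{R}^n} e^{-|z|^2/(4a)}\, dz = (4\pi a)^{n/2}$, one obtains
$$\int_{\mathbb{R}^n} G(x-y,t-\tau)\, dy = \frac{C_{n,s}}{(t-\tau)^{n/2+1-s}}\,(4\pi(t-\tau))^{n/2} = C_{n,s}(4\pi)^{n/2}\,(t-\tau)^{s-1}.$$
Integrating in $\tau$ over $(0,t)$, and using that $s>0$ makes the exponent $s-1>-1$ locally integrable at $\tau=t$, gives
$$\int_0^t (t-\tau)^{s-1}\, d\tau = \frac{t^s}{s} \le \frac{2^s}{s},$$
since $t<2$. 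Combining the three displays yields $|w(x,t)| \le C_{n,s}(4\pi)^{n/2}\,\dfrac{2^s}{s}\,\|f\|_{L^\infty(Q)}$, which is \eqref{Ewx0} with $C = C_{n,s}(4\pi)^{n/2}2^s/s$, a constant independent of $x\in B_1(0)$ and $t\in(1,2)$.

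The only point requiring genuine care is the convergence of the time integral near $\tau=t$: this is exactly where the hypothesis $s>0$ is used, guaranteeing that $(t-\tau)^{s-1}$ is integrable there. Everything else is the elementary Gaussian computation together with the observation that both the spatial and temporal integration ranges are bounded uniformly for $(x,t)\in\tilde{Q}$, so that no dependence on the base point enters the constant.
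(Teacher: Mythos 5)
Your proof is correct and follows essentially the same route as the paper: pull out $\|f\|_{L^\infty(Q)}$, note the support of $f_Q$ restricts $\tau$ to $(0,t)$, rescale/compute the Gaussian in $y$ to reduce the kernel to $C(t-\tau)^{s-1}$, and integrate this in $\tau$ over the bounded interval, using $s>0$ for integrability at $\tau=t$. The paper does the identical computation via the substitution $z=(x-y)/(2\sqrt{t-\tau})$, so there is nothing further to reconcile.
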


\begin{proof} In fact
\begin{eqnarray*}
|w(x,t)| &\leq &  \int_{-\infty}^t \int_{\mathbb{R}^n} | f_Q(y,\tau)| G(x-y, t-\tau) d y d\tau \nonumber \\
& \leq & C \, \|f\|_{L^\infty (Q)} \int_{0}^t \int_{B_2(0)} \frac{C_{n,s}}{(t-\tau)^{n/2+1-s}} e^{-\frac{|x-y|^2}{4(t-\tau)}} d y d \tau \nonumber \\
& \leq & C_1 \, \|f\|_{L^\infty (Q)} \int_{0}^t \frac{1}{(t-\tau)^{1-s}} \int_{\mathbb{R}^n} e^{-|z|^2} d \, z \, d \, \tau \nonumber \\
& \leq & C_2 \, \|f\|_{L^\infty (Q)} \int_{0}^t \frac{1}{(t-\tau)^{1-s}} d \, \tau \nonumber \\
& \leq & C_3 \, \|f\|_{L^\infty (Q)}.
\end{eqnarray*}
This completes the proof of Lemma \ref{lem4.1}. \end{proof}

\subsection{Regularity of w in x}

\subsubsection{H\"{o}lder estimate in the case $s \leq 1/2$}

We prove
\begin{lemma} \label{lem4.2}
Assume that $f(x,t)$ is bounded in $Q$.
Then there exists a constant $C>0$ independent of $t \in (1,2)$ and $x, \bar{x} \in B_1(0)$, such that
\begin{equation}
|w(x,t) - w(\bar{x}, t) | \leq \left\{\begin{array}{ll} C \|f\|_{L^\infty (Q)} |x-\bar{x}|^{2s}, \mbox{ if } s < 1/2,\\
C \|f\|_{L^\infty (Q)} |x-\bar{x}|  |\log \min\{|x-\bar{x}|, 1/2\}|, \mbox{ if } s = 1/2.
\end{array}
\right.
\label{Ewx1}
\end{equation}
\end{lemma}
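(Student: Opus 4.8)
The plan is to estimate $w(x,t)-w(\bar x,t)$ directly from the integral representation, using only that $f_Q$ is bounded and supported in $Q=B_2(0)\times(0,3)$. Since $t\in(1,2)$ forces $\tau\in(0,t)$, I write $\sigma=t-\tau\in(0,t)\subset(0,2)$ and bound $|f_Q|\le\|f\|_{L^\infty(Q)}\mathbf{1}_{B_2(0)\times(0,3)}$, which gives
$$|w(x,t)-w(\bar x,t)|\le\|f\|_{L^\infty(Q)}\int_0^t\!\int_{\mathbb R^n}\big|G(x-y,\sigma)-G(\bar x-y,\sigma)\big|\,dy\,d\sigma.$$
Everything then reduces to estimating this kernel integral. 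The two elementary facts I would record first are the Gaussian scaling identities $\int_{\mathbb R^n}G(z,\sigma)\,dz=C\sigma^{s-1}$ and $\int_{\mathbb R^n}|\nabla_z G(z,\sigma)|\,dz=C\sigma^{s-3/2}$, both obtained by the change of variables $z=\sqrt\sigma\,w$.

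Next I would split the $\sigma$-integral at $\sigma=h^2$ with $h:=|x-\bar x|$; one may assume $h<1/2$, since for $h\ge1/2$ the crude bound $|w(x,t)-w(\bar x,t)|\le 2\|w\|_{L^\infty}\le C\|f\|_{L^\infty(Q)}$ coming from Lemma \ref{lem4.1} already yields both claims (the right-hand sides being bounded below by a fixed positive multiple of $\|f\|_{L^\infty(Q)}$ in that range). On $0<\sigma<h^2$ I bound the kernel difference crudely by $G(x-y,\sigma)+G(\bar x-y,\sigma)$ and integrate, obtaining a contribution $\le C\int_0^{h^2}\sigma^{s-1}\,d\sigma=Ch^{2s}$, valid for every $s\in(0,1)$. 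On $h^2<\sigma<t$ I use the fundamental theorem of calculus along the segment $x_\theta=\bar x+\theta(x-\bar x)$, so that $|G(x-y,\sigma)-G(\bar x-y,\sigma)|\le h\int_0^1|\nabla G(x_\theta-y,\sigma)|\,d\theta$; integrating in $y$ by translation invariance gives $C h\,\sigma^{s-3/2}$, and then I integrate in $\sigma$. When $s<1/2$ the exponent $s-\tfrac32<-1$ makes $\int_{h^2}^t\sigma^{s-3/2}\,d\sigma$ dominated by its lower endpoint, with value $\le Ch^{2s-1}$, producing a contribution $\le Ch^{2s}$; when $s=1/2$ we instead get $\int_{h^2}^t\sigma^{-1}\,d\sigma=\log(t/h^2)\le C|\log h|$, producing $\le Ch|\log h|$. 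Summing the two regions yields precisely the asserted estimates.

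The argument is in essence a bookkeeping of Gaussian integrals, so there is no serious obstacle; the only genuinely delicate point is that the borderline exponent $s-\tfrac32$ in the gradient bound equals $-1$ exactly at $s=\tfrac12$, which is what converts the $C^{2s}$ estimate into a Log-Lipschitz estimate at that threshold, and one must be careful to retain the upper cutoff $\sigma<t\le 2$ so that the "large $\sigma$" integral converges at its top endpoint. The same splitting, with $\sigma^{s-3/2}$ replaced by bounds on higher kernel derivatives, is what will be iterated for the time regularity and for the Schauder estimates in the subsequent lemmas.
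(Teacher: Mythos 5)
Your proposal is correct, but it takes a genuinely different route from the paper. The paper splits the \emph{spatial} integral around the midpoint $\eta$ of the segment $[x,\bar x]$ into $B_\delta(\eta)$ and $B_3(\eta)\setminus B_\delta(\eta)$ with $\delta=|x-\bar x|$, performs the change of variables $\hat\tau=|x-y|^2/4(t-\tau)$ to convert the time integral into a bounded Gamma-type factor times a Riesz kernel $|x-y|^{-(n-2s)}$ (this step is where the paper invokes $n\ge 2$), and then applies the Mean Value Theorem on the outer region together with the comparability $|\xi-y|\sim|\eta-y|$. You instead split the \emph{time} integral at the parabolic scale $\sigma=h^2$ and use only the two scaling identities $\|G(\cdot,\sigma)\|_{L^1}=C\sigma^{s-1}$ and $\|\nabla G(\cdot,\sigma)\|_{L^1}=C\sigma^{s-3/2}$, bounding the difference crudely for $\sigma<h^2$ and via the fundamental theorem of calculus plus translation invariance for $\sigma>h^2$; the threshold $s=1/2$ appears exactly as the borderline exponent $s-\tfrac32=-1$, yielding the Log-Lipschitz bound. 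Both arguments are complete; yours is arguably more elementary, works uniformly in all dimensions (no $n\ge2$ restriction is needed for this lemma), and makes the parabolic scaling transparent, while the paper's spatial decomposition and the $|\xi-y|\sim|\eta-y|$ device are the machinery it reuses later for the Schauder estimates (Lemmas \ref{lem4.4}--\ref{lem4.6}), where the H\"older cancellation $f_Q(y,\tau)-f_Q(x,\tau)$ must be exploited pointwise in $y$ and a purely $L^1$-in-$y$ bound would not suffice without further modification. Your reduction to $h<1/2$ via Lemma \ref{lem4.1} is legitimate, since that lemma is established beforehand and the right-hand sides of \eqref{Ewx1} are bounded below by a fixed multiple of $\|f\|_{L^\infty(Q)}$ when $h\ge 1/2$.
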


\begin{proof}

It suffice to derive that
\begin{equation}
\int_{0}^t  \int_{B_2(0)} | G(x-y, t-\tau) - G(\bar{x}-y, t-\tau) | d y d \tau  \leq C |x-\bar{x}|^{2s}.
\label{3.1}
\end{equation}

Denote $\delta = |x-\bar{x}|$, and let $\eta$ be the midpoint on the line segment between $x$ and $\bar{x}$. Since $x, \bar{x} \in B_2(0)$, which is contained in $B_3(\eta)$, for convenience, we estimate the integrals in two parts of $B_3(\eta)$:  $B_\delta (\eta)$ and $B_3(\eta) \setminus B_\delta (\eta)$ respectively.

For $B_\delta (\eta)$ part, we calculate each individual integral separately.

By making change of variables $ \hat{\tau} = |x-y|^2/4(t-\tau)$,
the first integral
$$\int_0^t \int_{B_\delta (\eta)} \frac{C_{n,s}}{(t-\tau)^{n/2+1-s}} e^{-\frac{|x-y|^2}{4(t-\tau)}} dy d\tau$$
becomes
$$ \int_{B_\delta (\eta)} \frac{C}{|x-y|^{n-2s}} \int_{|x-y|^2/t}^\infty \hat{\tau}^{n/2-1-s} e^{-\hat{\tau}} d \hat{\tau} d y.$$
Under the assumption that $n \geq 2$, the integral
$$ \int_{|x-y|^2/t}^\infty \hat{\tau}^{n/2-1-s} e^{-\hat{\tau}} d \hat{\tau} $$
is bounded. Hence
\begin{equation}
\int_0^t  \int_{B_\delta (\eta)} | G(x-y, t-\tau)| d y d \tau \leq C \int_{B_\delta (\eta)} \frac{1}{|x-y|^{n-2s}} d y \leq C_1 \, \delta^{2s}.
\label{3.5}
\end{equation}

Similarly
\begin{equation}
\int_0^t  \int_{B_\delta (\eta)} | G(\bar{x}-y, t-\tau)| d y d \tau \leq C \int_{B_\delta (\eta)} \frac{1}{|\bar{x}-y|^{n-2s}} d y \leq C_1 \, \delta^{2s}.
\label{3.6}
\end{equation}

To estimate the integral of the difference on $B_3(\eta) \setminus B_\delta (\eta)$, we apply the Mean Value Theorem and let $\xi$ be some point between $x$ and $\bar{x}$ to arrive at
\begin{eqnarray}
& &\int_0^t  \int_{B_3(\eta) \setminus B_\delta (\eta)} | G(x-y, t-\tau) - G(\bar{x}-y, t-\tau) | d y d \tau \nonumber \\
& \leq & C \int_0^t \frac{1}{(t-\tau)^{n/2+1-s}} \int_{B_3(\eta) \setminus B_\delta (\eta)}  e^{-\frac{|\xi-y|^2}{4(t-\tau)}} \frac{|\xi-y|}{t-\tau} |x - \bar{x}| \, d y \, d \tau \nonumber \\
& = & C |x-\bar{x}| \int_{B_3(\eta) \setminus B_\delta (\eta)} \frac{1}{|\xi -y|^{n+1-2s}} \int_{\frac{|\xi -y|^2}{t}}^\infty \tau^{n/2-s} e^{-\tau} d \tau d y \nonumber \\
& \leq & C_1 \, \delta \, ( C_2 + \delta^{2s-1}) \nonumber \\
& \leq & C_3 \, \delta^{2s}, \;\; \; \mbox{ if } s < 1/2.
\label{3.7}
\end{eqnarray}
In the case $s = 1/2$, the last two lines of the above estimate becomes
$$ \leq C_1 \delta \, ( C_2 + |\log \delta |) \leq C_3 \delta \, |\log \delta|.$$

Here we have used the fact that in $B_3(\eta) \setminus B_\delta(\eta)$, $|\xi -y|$ is equivalent to $|\eta -y|$, i.e.
there exist positive constants $c$ and $C$ such that
$$ c |\eta - y| \leq |\xi -y| \leq C |\eta -y| $$
for any point $\xi$ lying on the line segment between $x$ and $\bar{x}$. We will use this fact several time in the following arguments.

Now from (\ref{3.5}), (\ref{3.6}), and (\ref{3.7}) we arrive at (\ref{3.1}) and hence obtain (\ref{Ewx1}). This completes the proof of Lemma \ref{lem4.2} \end{proof}

\subsubsection{H\"{o}lder estimate in the case $1/2 < s < 1$ }

In this case, we derive
\begin{lemma} \label{lem4.3}
 that, there is a constant $C>0$ independent of $t \in (1,2)$ and $x, \bar{x} \in B_1(0)$,
\begin{equation}
\left| \frac{\partial w}{\partial x_i} (x,t) - \frac{\partial w}{\partial x_i} (\bar{x},t) \right| \leq C \|f\|_{L^\infty (Q)} |x-\bar{x}|^{2s-1}.
\label{Ewx2}
\end{equation}
\end{lemma}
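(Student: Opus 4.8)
\textbf{Proof plan for Lemma \ref{lem4.3}.}

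The plan is to estimate the increment of $\partial_{x_i} w$ by differentiating under the integral sign and splitting the spatial integration into a near region and a far region, exactly mirroring the structure of the proof of Lemma \ref{lem4.2}, but now starting from the already-established formula
$$\frac{\partial w}{\partial x_i}(x,t) = C_{n,s}\int_0^t \int_{B_2(0)} f_Q(y,\tau)\,\frac{x_i-y_i}{2(t-\tau)}\,\frac{e^{-\frac{|x-y|^2}{4(t-\tau)}}}{(t-\tau)^{n/2+1-s}}\,dy\,d\tau.$$
(The validity of this differentiation under the integral, for $(x,t)\in\tilde Q$, can be justified as in Section 3 since $f_Q$ is supported away from the singularity in $\tau$ only through the $\tau$-integrable kernel, and $f_Q$ is bounded.) Writing $\delta = |x-\bar x|$ and letting $\eta$ be the midpoint of the segment $[x,\bar x]$, it suffices to prove
$$\int_0^t\int_{B_2(0)} \left| \partial_{x_i}G(x-y,t-\tau) - \partial_{x_i}G(\bar x-y,t-\tau)\right| dy\,d\tau \leq C\,\delta^{2s-1}.$$
I will bound this over $B_\delta(\eta)$ and over $B_3(\eta)\setminus B_\delta(\eta)$ separately, using that $B_2(0)\subset B_3(\eta)$.

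On the near ball $B_\delta(\eta)$, I will not use the difference but estimate each term separately. Using the bound $\frac{|x-y|}{t-\tau}e^{-\frac{|x-y|^2}{4(t-\tau)}}\leq \frac{C}{|x-y|}e^{-\frac{|x-y|^2}{8(t-\tau)}}$ (absorbing one power of the Gaussian), the change of variables $\hat\tau = |x-y|^2/(8(t-\tau))$ converts
$$\int_0^t\int_{B_\delta(\eta)} |\partial_{x_i}G(x-y,t-\tau)|\,dy\,d\tau \leq C\int_{B_\delta(\eta)}\frac{1}{|x-y|^{n+1-2s}}\int_{|x-y|^2/(8t)}^\infty \hat\tau^{\,n/2-1-s}e^{-\hat\tau}\,d\hat\tau\,dy,$$
and since $n\geq 2$ and $s>1/2$ the inner integral is bounded, leaving $\int_{B_\delta(\eta)}|x-y|^{2s-1-n}\,dy \leq C\,\delta^{2s-1}$ (here $2s-1>0$ so this is finite); the same bound holds with $\bar x$ in place of $x$ because $|x-y|$ and $|\bar x - y|$ are comparable to $|\eta - y|$ on $B_\delta(\eta)$ up to adjusting constants, or more simply by the identical computation centered at $\bar x$. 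On the far region $B_3(\eta)\setminus B_\delta(\eta)$, I apply the Mean Value Theorem to $\partial_{x_i}G(\cdot - y, t-\tau)$ along $[x,\bar x]$: for some $\xi$ on the segment,
$$|\partial_{x_i}G(x-y,t-\tau) - \partial_{x_i}G(\bar x - y,t-\tau)| \leq C\,\delta\,\frac{1}{(t-\tau)^{n/2+1-s}}\left(\frac{1}{t-\tau} + \frac{|\xi-y|^2}{(t-\tau)^2}\right)e^{-\frac{|\xi-y|^2}{4(t-\tau)}},$$
using the explicit formula for $\partial_{x_i x_j}^2 G$ recorded in Section 3. Absorbing Gaussian factors and using $|\xi-y|\sim|\eta-y|$ on the far region, the $\tau$-integral (via $\hat\tau = |\eta-y|^2/(c(t-\tau))$) produces $\int_{B_3(\eta)\setminus B_\delta(\eta)} |\eta - y|^{2s-1-n}\,dy$, which is $\leq C(C_2 + \delta^{2s-1})\leq C\,\delta^{2s-1}$ when $s>1/2$ (the constant term being dominated since $\delta\leq$ const). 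Combining the near and far estimates and multiplying by $\delta$ from the MVT gives the $\delta\cdot\delta^{2s-2} = \delta^{2s-1}$ bound on the far part, while the near part already gave $\delta^{2s-1}$; together this yields \eqref{Ewx2}.

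The main obstacle, and the point requiring genuine care rather than routine computation, is the bookkeeping of the homogeneities: one must check that after pulling out the single factor $\delta$ from the Mean Value Theorem, the remaining $y$-integral on $B_3(\eta)\setminus B_\delta(\eta)$ has the borderline exponent $2s-2$ in $|\eta-y|$, so that integrating over an annulus of inner radius $\delta$ in $\mathbb{R}^n$ gives precisely $\delta^{2s-2}$ (for $2s-2<0$, i.e. $s<1$) rather than a logarithm or a convergent integral — and to confirm that the contribution of the second-derivative kernel term $\frac{|\xi-y|^2}{(t-\tau)^2}$ does not worsen this count after one Gaussian factor is absorbed. A secondary technical point is justifying differentiation under the integral sign and the interchange of the $\tau$- and $y$-integrations with the limiting change of variables; these follow from the boundedness of $f$ and Tonelli/dominated convergence exactly as in the arguments of Section 3, so I would only remark on them briefly.
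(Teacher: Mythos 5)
Your proposal is correct and follows essentially the same route as the paper's proof: the same midpoint decomposition of $B_3(\eta)$ into $B_\delta(\eta)$ and $B_3(\eta)\setminus B_\delta(\eta)$, estimating each term separately on the near ball via the change of variables $\hat\tau\sim |x-y|^2/(t-\tau)$ to get $C\delta^{2s-1}$, and applying the Mean Value Theorem (equivalently, the second-derivative kernel bound) on the far annulus. The only difference is bookkeeping: your far-region count $\delta\cdot\delta^{2s-2}=\delta^{2s-1}$ is the precise homogeneity, whereas the paper's intermediate line in \eqref{3.8} displays the exponent $|\xi-y|^{-(n-2s)}$ (apparently a slip for $|\xi-y|^{-(n+2-2s)}$) and concludes with the bound $C\delta$; either way the far part is $\leq C\,\delta^{2s-1}$ since $\delta$ is bounded, so both arguments close the proof.
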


\begin{proof} Again it suffice to show
\begin{equation}
\int_0^t \int_{B_2(0)} | \frac{\partial}{\partial x_i} G(x-y, t-\tau) - \frac{\partial}{\partial x_i} G(\bar{x}-y, t-\tau) | d y d \tau  \leq C |x-\bar{x}|^{2s-1}.
\label{3.3}
\end{equation}

Again, set $\delta = |x-\bar{x}|$, and let $\eta$ be the midpoint on the line segment between $x$ and $\bar{x}$. We estimate the integrals in two parts of $B_3(\eta)$: $B_\delta (\eta)$ and $B_3(\eta) \setminus B_\delta (\eta)$ respectively.

For $B_\delta (\eta)$ part, we evaluate each individual integral separately.

\begin{eqnarray}
& & \int_0^t \int_{B_\delta (\eta)} \left| \frac{\partial}{\partial x_i} G(x-y, t-\tau) \right| \, d \, y \, d\, \tau \nonumber \\
& \leq & C  \int_{B_\delta (\eta)} \int_0^t e^{\frac{|x-y|^2}{4(t-\tau)}} \frac{|x-y|}{(t-\tau)^{n/2+2-s}} \, d \, \tau \, d\, y \nonumber \\
& = & C \int_{B_\delta (\eta)} \frac{1}{|x-y|^{n-2s +1}} \int_{\frac{|x-y|^2}{t}} ^\infty \tau^{n/2-s} e^{-\tau}  \, d \, \tau \, d\, y \nonumber \\
& \leq & C_1 \, \delta^{2s-1}.
\label{3.9}
\end{eqnarray}

Similarly,
\begin{equation}   \int_0^t \int_{B_\delta (\eta)} \left| \frac{\partial}{\partial x_i} G(\bar{x}-y, t-\tau) \right| \, d \, y \, d\, \tau \leq C \, \delta^{2s-1}.
\label{3.10}
\end{equation}

To estimate the part on $B_3(\eta) \setminus B_\delta (\eta)$, we apply the Mean Value Theorem to the difference in the integral on the left hand side of (\ref{3.3}).

Denote
$$ g_i(x) =  e^{- \frac{|x-y|^2}{4(t-\tau)}} (x_i -y_i).$$
Then
\begin{eqnarray}
& & \int_0^t \int_{B_3(\eta) \setminus B_\delta (\eta)} | \frac{\partial}{\partial x_i} G(x-y, t-\tau) - \frac{\partial}{\partial x_i} G(\bar{x}-y, t-\tau) | d y d \tau  \nonumber \\
& \leq & \int_0^t \frac{1}{(t-\tau)^{n/2+2-s}} \int_{B_3(\eta) \setminus B_\delta (\eta)} |\nabla g_i(\xi)| |x - \bar{x}| \, d y \, d \tau \nonumber \\
& \leq & C \int_0^t \frac{1}{(t-\tau)^{n/2+2-s}} \int_{B_3(\eta) \setminus B_\delta (\eta)}  e^{-\frac{|\xi-y|^2}{4(t-\tau)}} \left( \frac{|\xi-y|^2}{t-\tau} + 1 \right) |x - \bar{x}| \, d y \, d \tau \nonumber \\
& \leq & C_1 \, \delta \int_{B_3(\eta) \setminus B_\delta (\eta)} \frac{1}{|\xi-y|^{n-2s}} \int_{\frac{|\xi-y|}{t}}^\infty ( \tau^{n/2-s} + \tau^{n/2-1-s} ) e^{-\tau} d \tau d y \nonumber \\
& \leq & C_2 \, \delta \int_{B_3(\eta) \setminus B_\delta (\eta)} \frac{1}{|\xi-y|^{n-2s}} d y \nonumber \\
& \leq & C_3 \, \delta ( 1 + \delta^{2s} ) \nonumber \\
& \leq & C_4 \, \delta.
\label{3.8}
\end{eqnarray}

Now (\ref{3.9}), (\ref{3.10}), and (\ref{3.8}) imply (\ref{3.3}) and hence (\ref{Ewx2}). This completes the proof of Lemma \ref{lem4.3}.
\end{proof}

\subsubsection{Schauder estimate in the case $2s + \alpha \leq 1$}

Assume that for all $t \in (1,2)$, $f(t,x)$ is uniformly $\alpha$-H\"{o}lder continuous in $x$ on $B_2(0)$, and we simply denote this norm by $\|f\|_{C_x^\alpha(Q)}$.

We employ the H\"{o}lder continuity of $f$ to lift the regularity of $w$ and deduce
\begin{lemma} \label{lem4.4}  There exists constant $C>0$ independent of $t \in (1,2)$ and $x, \bar{x} \in B_1(0)$, such that

\begin{equation} \label{Ewx3}
|w(x,t) - w(\bar{x}, t)| \leq  \left\{\begin{array}{ll} C \, \|f\|_{C_x^\alpha(Q)} |x - \bar{x}|^{2s+\alpha}, & \mbox{ if } 2s +\alpha < 1, \\
C \, \|f\|_{C_x^\alpha(Q)} |x - \bar{x}| |\log \min\{|x-\bar{x}|, 1/2\}|, & \mbox{ if } 2s + \alpha = 1.
\end{array}
\right.
\end{equation}
\end{lemma}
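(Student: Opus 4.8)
The plan is to reduce, as in Lemmas \ref{lem4.2} and \ref{lem4.3}, the estimate \eqref{Ewx3} to a pure kernel estimate. Since
$$w(x,t)-w(\bar x,t)=\int_0^t\int_{B_2(0)}f_Q(y,\tau)\big(G(x-y,t-\tau)-G(\bar x-y,t-\tau)\big)\,dy\,d\tau,$$
and $2s+\alpha\le 1$ means there is \emph{no} spatial derivative of $w$ to match, the key difference from Lemma \ref{lem4.2} is that we are no longer allowed to throw away the H\"older continuity of $f$; we must exploit it. The standard device is to subtract a constant: fix $x,\bar x\in B_1(0)$, set $\delta=|x-\bar x|$ and let $\eta$ be the midpoint of the segment $[x,\bar x]$. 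Write $f_Q(y,\tau)=\big(f_Q(y,\tau)-f_Q(\eta,\tau)\big)+f_Q(\eta,\tau)$. For the part carrying $f_Q(\eta,\tau)$, we use the cancellation coming from the fact that $\int_{\mathbb{R}^n}\big(G(x-y,t-\tau)-G(\bar x-y,t-\tau)\big)dy=0$ when the $y$-integral is over all of $\mathbb{R}^n$; the genuine contribution is only the error from truncating to $B_2(0)$, which is controlled by the rapid Gaussian decay of $G$ away from the support and is of order $O(\delta)$ (or better), hence $\le C\|f\|_{L^\infty(Q)}\delta^{2s+\alpha}$ since $2s+\alpha\le 1$. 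For the part carrying $f_Q(y,\tau)-f_Q(\eta,\tau)$, we bound $|f_Q(y,\tau)-f_Q(\eta,\tau)|\le \|f\|_{C_x^\alpha(Q)}|y-\eta|^\alpha$ and are left with estimating
$$\|f\|_{C_x^\alpha(Q)}\int_0^t\int_{B_3(\eta)}|y-\eta|^\alpha\,\big|G(x-y,t-\tau)-G(\bar x-y,t-\tau)\big|\,dy\,d\tau.$$

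The second integral is handled exactly along the lines of Lemma \ref{lem4.2}, splitting $B_3(\eta)=B_\delta(\eta)\cup\big(B_3(\eta)\setminus B_\delta(\eta)\big)$. On $B_\delta(\eta)$ one estimates each of the two kernels separately: by the same change of variables $\hat\tau=|x-y|^2/4(t-\tau)$ used there,
$$\int_0^t\int_{B_\delta(\eta)}|y-\eta|^\alpha\,|G(x-y,t-\tau)|\,dy\,d\tau\le C\int_{B_\delta(\eta)}\frac{|y-\eta|^\alpha}{|x-y|^{n-2s}}\,dy\le C\,\delta^{2s+\alpha},$$
because on $B_\delta(\eta)$ we have $|y-\eta|^\alpha\le\delta^\alpha$ and $\int_{B_\delta(\eta)}|x-y|^{2s-n}dy\le C\delta^{2s}$ (here $x$ is within distance $\delta/2$ of $\eta$, so $B_\delta(\eta)\subset B_{2\delta}(x)$), and likewise for $\bar x$. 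On $B_3(\eta)\setminus B_\delta(\eta)$ we apply the Mean Value Theorem to $y\mapsto G(\cdot-y,t-\tau)$ between $x$ and $\bar x$, picking up a factor $|x-\bar x|\,|\nabla_x G(\xi-y,t-\tau)|$ with $\xi$ on the segment, and use that $|\xi-y|$ is comparable to $|\eta-y|$ there. After the same time change of variables this reduces to
$$C\,\delta\,\|f\|_{C_x^\alpha(Q)}\int_{B_3(\eta)\setminus B_\delta(\eta)}\frac{|\eta-y|^\alpha}{|\eta-y|^{n+1-2s}}\,dy
= C\,\delta\,\|f\|_{C_x^\alpha(Q)}\int_{B_3(\eta)\setminus B_\delta(\eta)}\frac{dy}{|\eta-y|^{n+1-2s-\alpha}}.$$
Since $2s+\alpha\le 1$, the exponent $n+1-2s-\alpha\ge n$, so this radial integral diverges at $0$ and contributes $\delta^{2s+\alpha-1}$ when $2s+\alpha<1$, giving $C\delta^{2s+\alpha}$; when $2s+\alpha=1$ the integral produces a logarithm $|\log\delta|$, giving the borderline bound $C\,\delta\,|\log\min\{\delta,1/2\}|$. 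Combining the two regions and the two kernels yields \eqref{Ewx3}.

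The main obstacle, and the only place real care is needed, is the subtraction-of-a-constant step and in particular making precise the cancellation estimate for the $f_Q(\eta,\tau)$ piece: the identity $\int_{\mathbb{R}^n}(G(x-y,\cdot)-G(\bar x-y,\cdot))dy=0$ holds over all of $\mathbb{R}^n$, but here the $y$-integration is restricted to $B_2(0)$ and the $\tau$-integration to $(0,t)$, so one must show the resulting boundary/tail terms are $O(\delta)$ uniformly in $(x,t)\in\tilde Q$. This follows because for $(x,t)\in\tilde Q$ and $y\notin B_2(0)$ one has $|x-y|\ge 1$, so the Gaussian $e^{-|x-y|^2/4(t-\tau)}$ together with the $(t-\tau)^{-n/2-1+s}$ factor is integrable with all the room to spare, and the difference of the two Gaussians is again estimated by the Mean Value Theorem as before; since the resulting bound is $\le C\|f\|_{L^\infty(Q)}\delta$ and $\delta\le 2$, it is $\le C\|f\|_{L^\infty(Q)}\delta^{2s+\alpha}$ in the range $2s+\alpha\le 1$. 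A secondary technical point is keeping track of the equivalence $c|\eta-y|\le|\xi-y|\le C|\eta-y|$ on $B_3(\eta)\setminus B_\delta(\eta)$, which is exactly the fact already isolated and used in the proof of Lemma \ref{lem4.2} and may be invoked verbatim. With these in hand, all remaining computations are routine changes of variables and beta-function/Gamma-function bounds identical in form to those in Lemmas \ref{lem4.2} and \ref{lem4.3}.
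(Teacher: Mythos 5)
Your proposal is correct and follows essentially the paper's own route: subtract a fixed spatial value of $f_Q$ to activate the H\"older continuity, dispose of the constant piece by a cancellation property of the Gaussian kernel, and then split $B_3(\eta)$ into $B_\delta(\eta)$ (estimate each kernel separately after the time change of variables) and the annulus $B_3(\eta)\setminus B_\delta(\eta)$ (Mean Value Theorem plus the equivalence $|\xi-y|\sim|\eta-y|$), yielding $\delta^{2s+\alpha}$ and the logarithmic bound at $2s+\alpha=1$. The only (harmless) deviation is the cancellation device: the paper subtracts $f_Q(x,\tau)$ and uses the exact reflection symmetry \eqref{3.10a} of the two Gaussian masses over $B_3(\eta)$, so the constant piece vanishes identically, whereas you subtract $f_Q(\eta,\tau)$ and invoke the full-space identity, paying an $O(\delta)$ truncation tail outside $B_2(0)$ that you correctly absorb into $\delta^{2s+\alpha}$ (resp. $\delta|\log\min\{\delta,1/2\}|$) because $2s+\alpha\le 1$ and $\delta\le 2$.
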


\begin{proof} Again, set $\delta = |x-\bar{x}|$, and let $\eta$ be the midpoint on the line segment between $x$ and $\bar{x}$.
By symmetry, we have
\begin{equation} \label{3.10a}
\int_{B_3 (\eta)} e^{- \frac{|x-y|^2}{4(t-\tau)}} d \, y = \int_{B_3 (\eta)} e^{- \frac{|\bar{x}-y|^2}{4(t-\tau)}} d \, y.
\end{equation}
Therefore
\begin{eqnarray}
& & |w(x,t) - w(\bar{x}, t)| \nonumber \\
& = & \left| \int_0^t \frac{1}{(t-\tau)^{n/2+1-s}} \int_{B_3(\eta)} ( f_Q (y,\tau) - f_Q(x,\tau) ) \left(  e^{- \frac{|x-y|^2}{4(t-\tau)}} -
e^{- \frac{|\bar{x}-y|^2}{4(t-\tau)}} \right) d \, y \, d \, \tau \right| \nonumber \\
& \leq & C \, \|f\|_{C_x^\alpha(Q)}   \int_{B_3(\eta)} |x-y|^\alpha \int_0^t \frac{1}{(t-\tau)^{n/2+1-s}} \left|  e^{- \frac{|x-y|^2}{4(t-\tau)}} -
e^{- \frac{|\bar{x}-y|^2}{4(t-\tau)}} \right| d \, y \, d \, \tau
\label{3.14}
\end{eqnarray}

 We estimate the integrals in two parts, $B_3(\eta) \setminus B_\delta (\eta)$ and $B_\delta (\eta)$,  respectively.

On the first part, we apply the Mean Value Theorem to derive
\begin{eqnarray}
& &  \int_{B_3(\eta)\setminus B_\delta (\eta)} |x-y|^\alpha \int_0^t \frac{1}{(t-\tau)^{n/2+1-s}} \left|  e^{- \frac{|x-y|^2}{4(t-\tau)}} -
e^{- \frac{|\bar{x}-y|^2}{4(t-\tau)}} \right| d \, y \, d \, \tau \nonumber \\
& \leq & C  \int_{B_3(\eta) \setminus B_\delta (\eta)} |x-y|^\alpha \int_0^t \frac{1}{(t-\tau)^{n/2+1-s}} e^{-\frac{|\xi-y|^2}{4(t-\tau)}} \frac{|\xi-y|}{t-\tau} |x - \bar{x}| \, d y \, d \tau \nonumber \\
& \leq & C \, |x-\bar{x}| \int_{B_3(\eta) \setminus B_\delta (\eta)} \frac{|x-y|^\alpha}{|\xi -y|^{n+1-2s}} \int_{\frac{|\xi -y|^2}{t}}^\infty \tau^{n/2-s} e^{-\tau} d \tau d y \nonumber \\
& \leq & C_1 \, |x-\bar{x}| \int_{B_3(\eta) \setminus B_\delta (\eta)} \frac{|x-y|^\alpha}{|\xi -y|^{n+1-2s}} d \, y \nonumber \\
& \leq & C_2 \, \delta \, ( C_3 + \delta^{2s+\alpha-1}) \nonumber \\
& \leq & C_4 \, \delta^{2s+\alpha}, \; \; \mbox{ if } 2s +\alpha <1.
\label{3.11}
\end{eqnarray}
In the case when $2s +\alpha =1$, the last two lines of the above estimates becomes
$$ \leq C_2 \, \delta \, ( C_3 + |\log \delta| ) \leq C_4 \delta \, |\log \delta|.$$

Here, we have again exploited  the fact that, on $B_3(\eta)\setminus B_\delta (\eta)$, $|x-y|$ is equivalent to $|\xi -y|$.

On the second part, we estimate each term separately.
\begin{eqnarray}
& & \int_{B_\delta (\eta)} |x-y|^\alpha \int_0^t \frac{1}{(t-\tau)^{n/2+1-s}} e^{- \frac{|x-y|^2}{4(t-\tau)}} d \, y \, d \, \tau \nonumber \\
& \leq & C \int_{B_\delta (\eta)} \frac{|x-y|^\alpha}{|x-y|^{n-2s}} d y \nonumber \\
&\leq &  C \, \delta^{2s+\alpha}.
\label{3.12}
\end{eqnarray}

Similarly
\begin{eqnarray}
& & \int_{B_\delta (\eta)} |x-y|^\alpha \int_0^t \frac{1}{(t-\tau)^{n/2+1-s}} e^{- \frac{|\bar{x}-y|^2}{4(t-\tau)}} d \, y \, d \, \tau \nonumber \\
& \leq & C \int_{B_\delta (\eta)} \frac{|x-y|^\alpha}{|\bar{x}-y|^{n-2s}} d y \nonumber \\
& \leq & C_1 \, \delta^\alpha \int_{B_\delta (\eta)} \frac{1}{|\bar{x}-y|^{n-2s}} d y \nonumber \\
&\leq &  C_2 \, \delta^{2s+\alpha}.
\label{3.13}
\end{eqnarray}

Now applying (\ref{3.11}), (\ref{3.12}), (\ref{3.13}), and (\ref{3.14}), we arrive at (\ref{Ewx3}). This completes the proof of Lemma \ref{lem4.4}. \end{proof}

\subsubsection{Schauder estimate in the case $1 < 2s + \alpha  \leq 2$}

\begin{lemma} \label{lem4.5}
There exists a constant $C$ independent of $t \in (1,2)$ and $x, \bar{x} \in B_1(0)$, such that
\begin{equation} \label{Ewx4}
\left|\frac{\partial w}{\partial x_i} (x,t) - \frac{\partial w}{\partial x_i} (\bar{x}, t)\right| \leq \left\{ \begin{array}{ll}
C \, \|f\|_{C^\alpha(Q)} |x - \bar{x}|^{2s +\alpha -1}, & \mbox{ if } 1< 2s +\alpha < 2, \\
C \, \|f\|_{C^\alpha(Q)} |x - \bar{x}|  |\log \min\{|x-\bar{x}|, 1/2\}|, & \mbox{ if } 2s +\alpha = 2.
\end{array}
\right.
\end{equation}
\end{lemma}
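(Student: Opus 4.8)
The plan is to follow the pattern of Lemmas \ref{lem4.2}--\ref{lem4.4}: reduce the statement to a quantitative estimate on the kernel $\partial_{x_i}G$, and then split the spatial integration into an inner ball and an outer annulus, treating them respectively by a direct singular-integral bound and the Mean Value Theorem. Throughout I write $\delta=|x-\bar x|$ and let $\eta$ be the midpoint of the segment $[x,\bar x]$, so that $B_2(0)\subset B_3(\eta)$ and, on the annulus $B_3(\eta)\setminus B_\delta(\eta)$, the lengths $|x-y|$, $|\bar x-y|$ and $|\xi-y|$ are all comparable to $|\eta-y|$ for every $\xi\in[x,\bar x]$.

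I would first fix the representation of $\partial_{x_i}w$. Since $\partial_{x_i}G(z,r)$ is a total $y$-derivative of a rapidly decaying kernel, $\int_{\mathbb{R}^n}\partial_{x_i}G(z,r)\,dz=0$ for every $r>0$; hence for $(x,t)\in\tilde Q$ one may subtract $f(x,\tau)$ under the integral sign, and, using $|f(y,\tau)-f(x,\tau)|\le\|f\|_{C^\alpha(Q)}|x-y|^\alpha$ together with the elementary bound
\[
\int_0^t\Bigl|\tfrac{\partial}{\partial x_i}G(z,t-\tau)\Bigr|\,d\tau\le C|z|^{2s-1-n},
\]
obtained by the substitution $\hat\tau=|z|^2/4(t-\tau)$, the integrand becomes dominated by $|x-y|^{2s+\alpha-1-n}$, which is locally integrable \emph{precisely because} $2s+\alpha>1$. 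A routine difference-quotient argument then shows $w\in C^1_x(B_1(0))$ with $\partial_{x_i}w(x,t)=\int_0^t\int_{\mathbb{R}^n}(f_Q(y,\tau)-f(x,\tau))\,\partial_{x_i}G(x-y,t-\tau)\,dy\,d\tau$. Combining this identity at $x$ and at $\bar x$ and using $\int_{\mathbb{R}^n}\partial_{x_i}G(\bar x-y,t-\tau)\,dy=0$, one obtains the clean difference formula
\[
\tfrac{\partial w}{\partial x_i}(x,t)-\tfrac{\partial w}{\partial x_i}(\bar x,t)=\int_0^t\!\!\int_{\mathbb{R}^n}(f_Q(y,\tau)-f(x,\tau))\Bigl[\tfrac{\partial}{\partial x_i}G(x-y,t-\tau)-\tfrac{\partial}{\partial x_i}G(\bar x-y,t-\tau)\Bigr]dy\,d\tau .
\]
On $\{|y|>2\}$, where $f_Q$ vanishes and the kernel difference is smooth (since $|x-y|,|\bar x-y|\ge 1$), this contributes only $O(\|f\|_\infty\delta)\le C\delta^{2s+\alpha-1}$ (using $2s+\alpha\le 2$); on $\{|y|\le 2\}$ one has $|f_Q(y,\tau)-f(x,\tau)|\le\|f\|_{C^\alpha(Q)}|x-y|^\alpha$, so everything reduces to bounding
\[
\int_0^t\!\!\int_{B_3(\eta)}|x-y|^\alpha\,\bigl|\partial_{x_i}G(x-y,t-\tau)-\partial_{x_i}G(\bar x-y,t-\tau)\bigr|\,dy\,d\tau
\]
by $C\delta^{2s+\alpha-1}$, resp. $C\delta|\log\delta|$ when $2s+\alpha=2$.

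On the annulus $B_3(\eta)\setminus B_\delta(\eta)$ I would use the Mean Value Theorem, $|\partial_{x_i}G(x-y,t-\tau)-\partial_{x_i}G(\bar x-y,t-\tau)|\le\delta\,|D_x^2G(\xi-y,t-\tau)|$, together with the analogous time estimate $\int_0^t|D_x^2G(z,t-\tau)|\,d\tau\le C|z|^{2s-2-n}$. Replacing $|x-y|$ and $|\xi-y|$ by $|\eta-y|$ and integrating in $y$, this part is $\le C\delta\int_\delta^3 r^{\alpha+2s-3}\,dr$, which is $\le C\delta^{2s+\alpha-1}$ when $\alpha+2s-3<-1$ (that is, $2s+\alpha<2$) and $\le C\delta|\log\delta|$ in the borderline case $2s+\alpha=2$. (For $\delta$ bounded below one simply invokes the $L^\infty$ bound on $\partial_{x_i}w$.)

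The inner ball $B_\delta(\eta)$ is, I expect, the main obstacle: there the $\tau$-integral of $|\partial_{x_i}G|$ grows like $|z|^{2s-1-n}$ near $z=0$, which is not locally integrable once $s\le\tfrac12$, so one cannot split the two summands of the kernel difference and simply estimate the $\bar x$-summand with the crude weight $|x-y|^\alpha\le C\delta^\alpha$. The remedy is to track the Hölder increment and the cancellations more carefully. The $x$-centered summand is harmless: it is $\le C\int_{B_\delta(\eta)}|x-y|^{2s+\alpha-1-n}\,dy\le C\delta^{2s+\alpha-1}$, convergent because $2s+\alpha>1$. For the $\bar x$-centered summand, write $f(y,\tau)-f(x,\tau)=(f(y,\tau)-f(\bar x,\tau))+(f(\bar x,\tau)-f(x,\tau))$; the first summand carries the weight $|y-\bar x|^\alpha$ and is bounded exactly as the $x$-centered one, while the second summand is constant in $y$, so I would apply the divergence theorem: $\int_{B_\delta(\eta)}\partial_{x_i}G(\bar x-y,t-\tau)\,dy=\pm\int_{\partial B_\delta(\eta)}G(\bar x-y,t-\tau)\,\nu_i\,dS_y$, and since $|\bar x-y|\ge\delta/2$ on $\partial B_\delta(\eta)$ the $\tau$-integral of this boundary quantity is $\le C\delta^{2s-1}$; multiplied by $|f(\bar x,\tau)-f(x,\tau)|\le\|f\|_{C^\alpha(Q)}\delta^\alpha$ this is again $\le C\delta^{2s+\alpha-1}$, and the argument is uniform in $s\in(0,1)$. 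Summing the annulus and ball contributions gives \eqref{Ewx4}. The genuinely new ingredients compared with Lemmas \ref{lem4.3}--\ref{lem4.4} are the re-centering of the Hölder increment on $\bar x$ and this divergence-theorem treatment of the constant-coefficient remainder; everything else is the by-now-familiar substitution in $\tau$ and the comparability of the distances on the annulus.
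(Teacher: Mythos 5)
Your argument is correct, and its skeleton is the same as the paper's: the same splitting into $B_\delta(\eta)$ and the annulus $B_3(\eta)\setminus B_\delta(\eta)$, the Mean Value Theorem plus the substitution in $\tau$ on the annulus (giving $\delta^{2s+\alpha-1}$, resp.\ $\delta|\log\delta|$ when $2s+\alpha=2$), and a direct bound for the $x$-centered singular term on the inner ball as in \eqref{3.22}. Where you genuinely deviate is in the two cancellation steps, and your variants are the more robust ones. First, you subtract $f(x,\tau)$ using the exact identity $\int_{\mathbb{R}^n}\partial_{x_i}G(\cdot,r)\,dy=0$, which simultaneously justifies the representation of $\partial_{x_i}w$ in the range $s\le 1/2$ (where the naive formula is not absolutely convergent — a point the paper passes over) at the cost of a harmless $O(\delta)$ contribution from $\{|y|>2\}$; the paper instead subtracts over $B_3(\eta)$ via the symmetry claim \eqref{3.10b}. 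Second, on the inner ball you re-center the H\"older increment at $\bar x$ and treat the constant remainder $(f(\bar x,\tau)-f(x,\tau))\int_{B_\delta(\eta)}\partial_{x_i}G(\bar x-y,t-\tau)\,dy$ by the divergence theorem, needing only an \emph{upper bound} $C\delta^{2s-1}$ on a single boundary integral, with the $\delta^\alpha$ gain coming from the explicit factor $|f(\bar x,\tau)-f(x,\tau)|$. The paper's corresponding step (the term $I_3$ in \eqref{3.24}, estimated in \eqref{3.27}) instead extracts the factor $|f(x)-f(\bar x)|$ by asserting that $\int_{B_\delta(\eta)}\partial_{x_i}F(x,y)\,dy$ and $\int_{B_\delta(\eta)}\partial_{x_i}F(\bar x,y)\,dy$ are equal; note that by the reflection $y\mapsto 2\eta-y$ these two quantities are in fact \emph{opposite} rather than equal, so the identity as written is sign-sensitive, and your formulation, which never compares the two ball-averaged kernels, sidesteps this delicacy entirely. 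In short: same decomposition and same kernel estimates, but a cleaner (and for $s\le 1/2$ more carefully justified) way of producing the crucial $\delta^\alpha$ cancellation on $B_\delta(\eta)$.
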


\begin{proof}
Similar to (\ref{3.10}), by symmetry, we have
\begin{equation} \label{3.10b}
\int_{B_3 (\eta)} e^{- \frac{|x-y|^2}{4(t-\tau)}} (x_i - y_i) d \, y = \int_{B_3 (\eta)} e^{- \frac{|\bar{x}-y|^2}{4(t-\tau)}} (\bar{x}_i -y_i) d \, y.
\end{equation}
Therefore
\begin{eqnarray}
& & \left|\frac{\partial w}{\partial x_i}(x,t) - \frac{\partial w}{\partial x_i} (\bar{x}, t)\right| \nonumber \\
& = & C \, \left| \int_{B_3(\eta)} \int_0^t \frac{  f_Q (y,\tau) - f_Q(x,\tau) }{(t-\tau)^{n/2+2-s}}  \left(  e^{- \frac{|x-y|^2}{4(t-\tau)}} (x_i-y_i) -
e^{- \frac{|\bar{x}-y|^2}{4(t-\tau)}} (\bar{x}_i-y_i)\right) d \, y \, d \, \tau \right| \nonumber \\
& \leq & C_1 \int_{B_3(\eta)} \int_0^t \frac{  \|f\|_{C_x^\alpha(Q)} |x-y|^\alpha }{(t-\tau)^{n/2+2-s}} \left|  e^{- \frac{|x-y|^2}{4(t-\tau)}} (x_i-y_i) -
e^{- \frac{|\bar{x}-y|^2}{4(t-\tau)}} (\bar{x}_i-y_i) \right| d \, y \, d \, \tau
\label{3.20}
\end{eqnarray}

We first estimate the integral on $B_3(\eta) \setminus B_\delta (\eta)$. We apply the Mean Value Theorem to the difference in the integral on the left hand side of (\ref{3.20}).

\begin{eqnarray}
& & \int_{B_3(\eta) \setminus B_\delta (\eta)} \int_0^t \frac{|x-y|^\alpha }{(t-\tau)^{n/2+2-s}} \left|  e^{- \frac{|x-y|^2}{4(t-\tau)}} (x_i-y_i) -
e^{- \frac{|\bar{x}-y|^2}{4(t-\tau)}} (\bar{x}_i-y_i) \right| d \, y \, d \, \tau \nonumber \\
& \leq & C  \int_{B_3(\eta) \setminus B_\delta (\eta)} \int_0^t \frac{|x-y|^\alpha}{(t-\tau)^{n/2+2-s}} e^{-\frac{|\xi-y|^2}{4(t-\tau)}} \left( \frac{|\xi-y|^2}{t-\tau} + 1 \right) |x - \bar{x}| \, d y \, d \tau \nonumber \\
& \leq & C_1 \, \delta \int_{B_3(\eta) \setminus B_\delta (\eta)} \frac{|x-y|^\alpha}{|\xi-y|^{n+2-2s}} \int_{\frac{|\xi-y|}{t}}^\infty ( \tau^{n/2-s} + \tau^{n/2-1-s} ) e^{-\tau} d \tau d y \nonumber \\
& \leq & C_2 \, \delta \int_{B_3(\eta) \setminus B_\delta (\eta)} \frac{|x-y|^\alpha}{|\xi-y|^{n+2-2s}} d y \nonumber \\
& \leq & C_3 \, \delta ( C_4 + C_5 \delta^{2s+\alpha -2}) \nonumber \\
& \leq & C_6 \, \delta^{2s+\alpha-1}, \;\; \mbox{ if } 1< 2s +\alpha < 2.
\label{3.21}
\end{eqnarray}
In the case $2s + \alpha =2$, the last two lines in the above estimates becomes
\begin{equation} \label{3.21a}
 \leq  C_3 \, \delta \, ( C_4 + C_5 |\log \delta|) \leq C_6 \, \delta \, |\log \delta|.
 \end{equation}

For the integral on $B_\delta (\eta)$, we estimate each term separately.

\begin{eqnarray}
& & \int_{B_\delta (\eta)} \int_0^t \frac{ |x-y|^\alpha }{(t-\tau)^{n/2+2-s}}  \left| e^{- \frac{|x-y|^2}{4(t-\tau)}} (x_i-y_i) \right|  d \, y \, d \, \tau   \nonumber \\
& \leq & C  \int_{B_\delta (\eta)} |x-y|^\alpha \int_0^t e^{-\frac{|x-y|^2}{4(t-\tau)}} \frac{|x-y|}{(t-\tau)^{n/2+2-s}} \, d \, \tau \, d\, y \nonumber \\
& = & C \int_{B_\delta (\eta)} \frac{|x-y|^{\alpha}}{|x-y|^{n+1-2s}} \int_{\frac{|x-y|^2}{t}} ^\infty \tau^{n/2-s} e^{-\tau}  \, d \, \tau \, d\, y \nonumber \\
& \leq & C_1 \int_{B_\delta (\eta)} \frac{|x-y|^{\alpha}}{|x-y|^{n+1-2s}} \, d\, y \nonumber \\
& \leq & C_2 \, \delta^{2s+\alpha-1}.
\label{3.22}
\end{eqnarray}

Now if we apply the similar argument to the second part of (\ref{3.20}), what we obtain is
\begin{eqnarray}
& & \int_{B_\delta (\eta)} \int_0^t \frac{ |x-y|^\alpha }{(t-\tau)^{n/2+2-s}}  \left| e^{- \frac{|\bar{x}-y|^2}{4(t-\tau)}} (\bar{x}_i-y_i) \right|  d \, y \, d \, \tau  \nonumber \\
& \leq & C \int_{B_\delta (\eta)} \frac{|x-y|^{\alpha}}{|\bar{x}-y|^{n+1-2s}} \, d\, y.
\label{3.23}
\end{eqnarray}
Here in $B_\delta (\eta)$, $|x-y|$ is not equivalent to $|\bar{x}-y|$, and in our case $s \geq 1/2$, the integral on the last line of (\ref{3.23}) diverges. For this reason, we need to modify the right hand side of (\ref{3.20}), so that the term $|x-y|^\alpha$ in (\ref{3.23}) will become $|\bar{x}-y|^\alpha$.

For simplicity of notation, at this moment, we denote
$$ F(x,y) = e^{-\frac{|x-y|^2}{4(t-\tau)}} \;\; f(y) = f_Q(y, \tau) \;\; g(x) = \int_{B_\delta (\eta)} f(y) F(x,y) d \, y.$$
Although $F$, $f$, and $g$ are also functions of $t$ and $\tau$, at this moment we fix $t$ and $\tau$.

We first estimate the difference
$$ \frac{\partial g}{\partial x_i}(x) - \frac{\partial g}{\partial x_i}(\bar{x}).$$
We have
\begin{eqnarray*}
& & \frac{\partial g}{\partial x_i}(x) = \\
& & \int_{B_\delta (\eta)} f(y) \frac{\partial F}{\partial x_i} (x,y) d \, y -  \int_{B_\delta (\eta)} f(x) \frac{\partial F}{\partial x_i} (x,y) d \, y +  \int_{B_\delta (\eta)} f(x) \frac{\partial F}{\partial x_i} (x,y) d \, y.
\end{eqnarray*}
Similarly,
\begin{eqnarray*}
& & \frac{\partial g}{\partial x_i}(\bar{x}) = \\
& & \int_{B_\delta (\eta)} f(y) \frac{\partial F}{\partial x_i} (\bar{x},y) d \, y -  \int_{B_\delta (\eta)} f(\bar{x}) \frac{\partial F}{\partial x_i} (\bar{x},y) d \, y +  \int_{B_\delta (\eta)} f(\bar{x}) \frac{\partial F}{\partial x_i} (\bar{x},y) d \, y.
\end{eqnarray*}
It follows that
\begin{eqnarray}
& & \left|  \frac{\partial g}{\partial x_i}(x) - \frac{\partial g}{\partial x_i}(\bar{x}) \right| \nonumber \\
& \leq & \int_{B_\delta (\eta)} | f(y) - f(x)| \frac{\partial F}{\partial x_i} (x,y) d \, y + \int_{B_\delta (\eta)} |f(y)- f(\bar{x})| \frac{\partial F}{\partial x_i} (\bar{x},y) d \, y \nonumber \\
& + & \left| \int_{B_\delta (\eta)} f(x) \frac{\partial F}{\partial x_i} (x,y) d \, y - \int_{B_\delta (\eta)} f(\bar{x}) \frac{\partial F}{\partial x_i} (\bar{x},y) d \, y \right| \nonumber \\
& \equiv & I_1 + I_2 + I_3.
\label{3.24}
\end{eqnarray}

Similar to the above arguments, we derive
\begin{equation}
\int_0^t \frac{1}{(t-\tau)^{n/2+1-s}} \, I_1 \, d \tau \leq C \, \|f\|_{C_x^\alpha (Q)} \delta^{2s+\alpha-1},
\label{3.25}
\end{equation}
and
\begin{equation}
\int_0^t \frac{1}{(t-\tau)^{n/2+1-s}} \, I_2 \, d \tau \leq C \, \|f\|_{C_x^\alpha (Q)} \delta^{2s+\alpha-1}.
\label{3.26}
\end{equation}

To estimate $I_3$, we employ the symmetry
$$ \int_{B_\delta (\eta)} \frac{\partial F}{\partial x_i} (x,y) d \, y = \int_{\partial B_\delta (\eta)} F (x,y) \nu_i (y)  d \sigma_y
= \int_{\partial B_\delta (\eta)} F (\bar{x},y) \nu_i (y)  d \sigma_y =  \int_{B_\delta (\eta)} \frac{\partial F}{\partial x_i} (\bar{x},y) d \, y$$
where $\nu_i(y)$ is the $ith$ component of the unit outward normal on $\partial B_\delta (\eta)$. Therefore
\begin{eqnarray}
& & \int_0^t \frac{1}{(t-\tau)^{n/2+1-s}} \, I_3 \, d \tau  \nonumber \\
& \leq & C \, \|f\|_{C_x^\alpha (Q)} |x-\bar{x}|^\alpha \int_{\partial B_\delta (\eta)} \int_0^t \frac{1}{(t-\tau)^{n/2+1-s}} e^{-\frac{|x-y|^2}{4(t-\tau)}} d \, \tau \, d \sigma_y \nonumber \\
& \leq & C_1 \, \|f\|_{C_x^\alpha (Q)} |x-\bar{x}|^\alpha \int_{\partial B_\delta (\eta)} \frac{1}{|x-y|^{n-2s}} d \sigma_y \nonumber \\
& \leq & C_2 \, \|f\|_{C_x^\alpha (Q)} \delta^{2s+\alpha-1}.
\label{3.27}
\end{eqnarray}

Now by \eqref{3.25}, \eqref{3.26}, \eqref{3.27}, and \eqref{3.24}, we arrive at
\begin{equation} \label{3.28}
\left| \int_{B_\delta(\eta)} \int_0^t \frac{  f_Q (y,\tau) }{(t-\tau)^{n/2+2-s}}  \left(  e^{- \frac{|x-y|^2}{4(t-\tau)}} (x_i-y_i) -
e^{- \frac{|\bar{x}-y|^2}{4(t-\tau)}} (\bar{x}_i-y_i)\right) d \, y \, d \, \tau \right| \leq C \, \|f\|_{C_x^\alpha (Q)} \delta^{2s+\alpha-1}.
\end{equation}

Finally, substituting the results in \eqref{3.21}, \eqref{3.21a}, and \eqref{3.28} into  \eqref{3.20}, we arrive at \eqref{Ewx4}.
This completes the proof of Lemma \ref{lem4.5}. \end{proof}
\medskip

\begin{remark}
By symmetry, \eqref{3.10b} holds if $B_3(\eta)$ is replaced by $B_\delta(\eta)$. Hence in (\ref{3.28}), the term $f_Q(y,\tau)$ can be replaced by $f_Q(y,\tau) - f_Q(x,\tau)$, or $f_Q(y,\tau) - f_Q(\bar{x},\tau)$, while the results are the same.
\end{remark}

\subsubsection{Schauder estimate in the case $2 < 2s + \alpha < 3$}

Employing a similar argument as in the previous subsection, one can obtain
\begin{lemma} \label{lem4.6}
 There exists constant $C$ independent of $t \in (1,2)$ and $x, \bar{x} \in B_1(0)$, such that
\begin{equation} \label{3.30a}
\left| \frac{\partial^2 w}{\partial x_i \partial x_j}(x,t) - \frac{\partial^2 w}{\partial x_i \partial x_j}(\bar{x},t) \right| \leq C
\|f\|_{C^\alpha(Q)} |x - \bar{x}|^{2s +\alpha -2}, \;\; \mbox{ if } 2 < 2s + \alpha < 3.
\end{equation}
\end{lemma}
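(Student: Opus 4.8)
The strategy is to differentiate once more in the spatial variables compared to Lemma \ref{lem4.5}, and to re-run essentially the same splitting and mean-value argument on the second derivatives of the kernel. Write, as before, $\delta = |x - \bar{x}|$, let $\eta$ be the midpoint of the segment joining $x$ and $\bar{x}$, and split $B_3(\eta) = B_\delta(\eta) \cup (B_3(\eta) \setminus B_\delta(\eta))$. The starting point is the integral representation
$$
\frac{\partial^2 w}{\partial x_i \partial x_j}(x,t) = C_{n,s}\int_0^t \frac{1}{(t-\tau)^{n/2+1-s}} \int_{B_3(\eta)} f_Q(y,\tau)\, \partial^2_{x_ix_j}\!\left(e^{-\frac{|x-y|^2}{4(t-\tau)}}\right) d y\, d\tau,
$$
where $\partial^2_{x_ix_j} e^{-|x-y|^2/4(t-\tau)} = \left(\frac{\delta_{ij}}{-2(t-\tau)} + \frac{(x_i-y_i)(x_j-y_j)}{4(t-\tau)^2}\right) e^{-|x-y|^2/4(t-\tau)}$. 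As in the proof of Lemma \ref{lem4.5}, one uses the H\"older continuity of $f$ to replace $f_Q(y,\tau)$ by $f_Q(y,\tau) - f_Q(x,\tau)$ (the subtracted constant integrates to zero against the second derivative of the Gaussian over $B_3(\eta)$, by the same symmetry/divergence-theorem argument used for \eqref{3.10a} and \eqref{3.10b}), gaining a factor $|x-y|^\alpha$ in every integrand.

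On the outer annulus $B_3(\eta) \setminus B_\delta(\eta)$, I would apply the Mean Value Theorem to the difference of the second-derivative kernels evaluated at $x$ and $\bar x$, introducing a factor $|x - \bar x| = \delta$ and one more derivative, i.e. a third-order derivative of the Gaussian, which produces terms of size $\big(\tfrac{|\xi-y|^3}{(t-\tau)^3} + \tfrac{|\xi-y|}{(t-\tau)^2}\big) e^{-|\xi-y|^2/4(t-\tau)}$ times $(t-\tau)^{-n/2-1+s}$. Integrating in $\tau$ via the substitution $\hat\tau = |\xi-y|^2/(t-\tau)$ reduces the $\tau$-integral to a bounded Gamma-type integral (using $n\geq 2$), leaving $\delta \int_{B_3(\eta)\setminus B_\delta(\eta)} |x-y|^\alpha |\xi-y|^{-(n+3-2s)} dy$; since on this annulus $|x-y| \simeq |\xi - y| \simeq |\eta - y|$, this is $\simeq \delta \int_{B_3(\eta)\setminus B_\delta(\eta)} |\eta - y|^{\alpha - n - 3 + 2s} dy \leq C\delta(C' + \delta^{2s+\alpha-3}) \leq C''\delta^{2s+\alpha-2}$, where the last bound uses $2 < 2s+\alpha < 3$ so the exponent $\alpha - 3 + 2s \in (-1,0)$ makes the integral near the inner radius $\delta$ the dominant contribution. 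On the inner ball $B_\delta(\eta)$ I would estimate the two pieces separately: the term centered at $x$ gives $\int_{B_\delta(\eta)} |x-y|^\alpha |x-y|^{-(n+2-2s)} dy \simeq \delta^{2s+\alpha-2}$ directly (here $2s+\alpha-2 > 0$ guarantees integrability). The piece centered at $\bar x$ has the mismatch $|x-y|^\alpha$ against $|\bar x - y|^{-(n+2-2s)}$ where the two distances are not comparable on $B_\delta(\eta)$, and the naive integral diverges; this is handled, exactly as in Lemma \ref{lem4.5}, by instead differentiating the integral $g(x) = \int_{B_\delta(\eta)} f(y) F(x,y)\,dy$ with respect to $x$, splitting $\frac{\partial^2 g}{\partial x_i \partial x_j}(x) - \frac{\partial^2 g}{\partial x_i \partial x_j}(\bar x)$ into a sum $I_1 + I_2 + I_3$ where $I_1, I_2$ carry $|f(y) - f(x)|$ resp. $|f(y) - f(\bar x)|$ against single second-derivative kernels at a fixed base point, and $I_3$ carries the constants $f(x), f(\bar x)$ which are reconciled using the divergence-theorem identity $\int_{B_\delta(\eta)} \partial^2_{x_ix_j} F(x,y)\,dy = \int_{\partial B_\delta(\eta)} \partial_{x_i} F(x,y)\,\nu_j(y)\,d\sigma_y$ evaluated at $x$ and at $\bar x$, reducing $I_3$ to a boundary integral of a single derivative of the Gaussian.

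The main obstacle, as in Lemma \ref{lem4.5}, is the inner-ball estimate of the $\bar x$-centered term: the non-equivalence of $|x-y|$ and $|\bar x - y|$ on $B_\delta(\eta)$ together with the higher singularity of $\partial^2_{x} F$ makes a direct bound fail, so the symmetrization device (moving a derivative to the boundary via the divergence theorem, and peeling off the constant part of $f$) is essential. A secondary technical point is bookkeeping the various Gamma-function integrals $\int_{|\xi-y|^2/t}^\infty \hat\tau^{m} e^{-\hat\tau}\,d\hat\tau$ for the several values of $m$ that arise from third-order Gaussian derivatives, and checking each is uniformly bounded in $t \in (1,2)$ using $n \geq 2$; this is routine given the computations already carried out in Lemmas \ref{lem4.4}--\ref{lem4.5}. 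Assembling the annulus estimate, the two inner-ball estimates, and the $I_1+I_2+I_3$ bound, and multiplying through by $\|f\|_{C^\alpha_x(Q)}$, yields \eqref{3.30a}.
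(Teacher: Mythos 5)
Your proposal is correct and takes essentially the same route the paper intends: the paper gives no detailed proof of Lemma \ref{lem4.6}, stating only that it follows ``by a similar argument as in the previous subsection,'' i.e.\ the proof of Lemma \ref{lem4.5} run one derivative higher, which is exactly what you carry out (annulus estimate via the Mean Value Theorem with third-order Gaussian derivatives, and the symmetrization/divergence-theorem device of \eqref{3.24}--\eqref{3.28} for the $\bar{x}$-centered inner-ball term). Your exponent bookkeeping ($\delta^{2s+\alpha-3}$ on the annulus, $\delta^{2s+\alpha-2}$ on $B_\delta(\eta)$, both using $2<2s+\alpha<3$) matches what the paper's scheme produces.
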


\subsection{Regularity of w in t}

For each fixed $x$, we derive H\"{o}lder and Schauder estimates of $w(x,t)$ in time variable $t$.

\subsubsection{H\"{o}lder estimate}

We prove
\begin{lemma} \label{lem4.7}  There exists a constant $C$ independent of  $x \in B_1(0)$ and $t, \bar{t} \in (1,2)$,  such that
\begin{equation}
| w(x,t) - w(x, \bar{t}) | \leq C \, \|f\|_{L^\infty (Q)} |t - \bar{t}|^s.
\label{Ewt1}
\end{equation}
\end{lemma}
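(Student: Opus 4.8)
The plan is to reduce the estimate, as in all the previous $w$-lemmas, to a statement purely about the Green's function: it suffices to show that for all $x\in B_1(0)$ and $t,\bar t\in(1,2)$,
\[
\int_0^{\min\{t,\bar t\}}\int_{B_2(0)}\bigl|G(x-y,t-\tau)-G(x-y,\bar t-\tau)\bigr|\,dy\,d\tau
\;+\;\Bigl|\int_{\bar t}^{t}\int_{B_2(0)}G(x-y,t-\tau)\,dy\,d\tau\Bigr|\;\le\;C\,|t-\bar t|^{s},
\]
since then $|w(x,t)-w(x,\bar t)|\le\|f\|_{L^\infty(Q)}$ times the left-hand side. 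Write $h=|t-\bar t|$ and, without loss of generality, $t>\bar t$. The second (short-time) term is handled directly: carrying out the substitution $\hat\tau=|x-y|^2/(4(t-\tau))$ as in the proof of Lemma \ref{lem4.1}/Lemma \ref{lem4.2}, one gets $\int_{\bar t}^{t}G(x-y,t-\tau)\,d\tau\le C|x-y|^{2s-n}\cdot(\text{incomplete Gamma factor})$, and integrating $|x-y|^{2s-n}$ over the region where $|x-y|\lesssim\sqrt h$ contributes $O(h^{s})$, while over $|x-y|\gtrsim\sqrt h$ the exponential cutoff $e^{-|x-y|^2/(4h)}$ forces rapid decay; either way the bound is $O(h^{s})$.

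For the main (difference) term I would split the $\tau$-integral at the scale $t-\tau\sim h$. On the near-diagonal piece $t-\tau\le 2h$ (equivalently $\bar t-\tau\le h$ roughly) I do not use the difference at all: I bound $|G(x-y,t-\tau)-G(x-y,\bar t-\tau)|$ by $G(x-y,t-\tau)+G(x-y,\bar t-\tau)$ and use the already-proven fact (from Lemma \ref{lem4.1}) that $\int_{t-2h}^{t}\int_{B_2(0)}G\,dy\,d\tau\le C\,h^{s}$, since $\int_0^{2h}(t-\tau)^{s-1}\,d\tau\sim h^{s}$. On the far piece $t-\tau>2h$ I use the Mean Value Theorem in $t$: $G(x-y,t-\tau)-G(x-y,\bar t-\tau)=\partial_t G(x-y,\theta-\tau)\,(t-\bar t)$ for some $\theta$ between $\bar t$ and $t$, with $\theta-\tau\sim t-\tau$ on this range. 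Using the formula \eqref{grn-t} for $\partial_t G$ and the substitution $\hat\tau=|x-y|^2/(4(\theta-\tau))$, the $\tau$-integral of $|\partial_t G|$ produces $C|x-y|^{2s-2-n}$ times bounded incomplete-Gamma factors (here $n\ge2$ keeps these finite), but now the lower limit of integration is cut at $t-\tau=2h$, which translates into $\hat\tau\gtrsim|x-y|^2/h$; integrating against $h=|t-\bar t|$ and then over $y\in B_2(0)$ gives $C\,h\cdot h^{s-1}=C\,h^{s}$ from the region $|x-y|\lesssim\sqrt h$ and a convergent contribution otherwise. Summing the two pieces and the short-time term yields \eqref{Ewt1}.

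The main obstacle I anticipate is bookkeeping the interplay between the cutoff $t-\tau>2h$ and the spatial integration: one must be careful that the incomplete Gamma factor $\int_{|x-y|^2/(ch)}^{\infty}\hat\tau^{\,n/2-2-s}e^{-\hat\tau}\,d\hat\tau$ is not merely bounded but genuinely small (exponentially) when $|x-y|^2\gg h$, so that the spatial integral of $|x-y|^{2s-2-n}$ — which would otherwise diverge at infinity for $2s<2$ is irrelevant here but the local behavior near $y=x$ is the delicate point — is controlled and produces exactly the power $h^{s}$ rather than something worse. Concretely, one wants $\int_{B_2(0)}|x-y|^{2s-2-n}\min\{1,(h/|x-y|^2)^{\,n/2-1-s}\}\,dy\le C h^{s}$, which holds by splitting at $|x-y|=\sqrt h$; verifying the exponents line up ($2s-2-n + n = 2s-2$, and $\int_0^{\sqrt h}r^{2s-2-1}\cdot r\,dr\sim h^{s}$ since the $\min$ equals $1$ there after the Gamma factor is bounded below) is the one place where the argument could go wrong if $s$ is near $0$ or near $1$, so I would treat those endpoint behaviors explicitly. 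No new ideas beyond those already used for the spatial estimates are needed; this is the temporal analogue of Lemma \ref{lem4.2}.
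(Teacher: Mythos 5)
Your overall decomposition (split the time integral at scale $h=|t-\bar t|$, treat the extra interval and the near-diagonal piece by direct kernel bounds, and apply the Mean Value Theorem in $t$ on the far piece) is the same skeleton as the paper's proof of Lemma \ref{lem4.7}, and your treatment of the short-time term and the near-diagonal piece is fine. The gap is in the far piece, precisely at the point you flag as delicate. Because you integrate in $\tau$ first via $\hat\tau=|x-y|^2/(4(\theta-\tau))$, you produce the kernel $|x-y|^{2s-2-n}$, whose integral over $B_2(0)$ \emph{diverges} at $y=x$ for every $s<1$; so ``bounded incomplete-Gamma factors'' cannot close the argument, and the smallness you need is in the region $|x-y|^2\ll h$, not $|x-y|^2\gg h$ as you state (for $|x-y|^2\gg h$ the crude bound $h\int_{|x-y|\ge\sqrt h}|x-y|^{2s-2-n}\,dy\le C h^{s}$ already suffices). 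Moreover the cutoff direction is reversed: on the far piece $\theta-\tau\gtrsim h$, so $\hat\tau\lesssim |x-y|^2/h$ is an \emph{upper} limit, not a lower one. The correct compensating bound for $J(r):=\int_{h}^{2}|\partial_\tau G|(y,\sigma)\,d\sigma$ (with $r=|x-y|$, using \eqref{grn-t}) is $J(r)\le C\min\bigl\{r^{2s-2-n},\,h^{\,s-1-n/2}\bigr\}$, i.e.\ a factor $\min\{1,(r^2/h)^{n/2+1-s}\}$, not your $\min\{1,(h/r^2)^{n/2-1-s}\}$; with your exponent the integral near $y=x$ fails for part of the range of $s$ (e.g.\ $n=2$, $s\le 1/2$), and your check ``$\int_0^{\sqrt h}r^{2s-2-1}\cdot r\,dr\sim h^{s}$ since the $\min$ equals $1$ there'' is false: if the $\min$ were $1$ the integral would diverge, and the displayed integral is $\sim h^{s-1/2}$ in any case. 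With the corrected min-structure the two regions $r\le\sqrt h$ and $r\ge\sqrt h$ each contribute $C h^{s}$ after multiplying by the MVT factor $h$, so your route is repairable, but as written the decisive estimate is wrong.

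It is also worth noting that this entire bookkeeping is avoidable, and the paper avoids it: integrate in $y$ \emph{first}, over all of $\R^n$, with $z=(x-y)/(2\sqrt{\theta-\tau})$. This gives $\int_{B_2(0)}G(x-y,\sigma)\,dy\le C\sigma^{s-1}$ and $\int_{B_2(0)}|\partial_\tau G|(x-y,\sigma)\,dy\le C\sigma^{s-2}$, so the far piece is $\le C h\int_0^{\bar t-h}(\xi-\tau)^{s-2}\,d\tau\le C h\cdot h^{s-1}=Ch^{s}$ and the near piece and extra interval are $\le C\int (t-\tau)^{s-1}\,d\tau\le Ch^{s}$, with no singular spatial kernel ever appearing. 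That is the temporal analogue you were looking for, and it is strictly simpler than the Gamma-substitution route you propose.
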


\begin{proof} Let $\chi_{(a,b)}$ be the characteristic function on interval $(a,b)$.

Assume $\bar{t} < t$ and $|t-\bar{t}|=\delta.$

We have
\begin{eqnarray}
&  & | w(x,t) - w(x, \bar{t}) | \nonumber \\
& \leq & \int_0^2 \int_{B_2(0)} \left| G(x-y, t-\tau) \chi_{(0,t)} - G(x-y, \bar{t} - \tau) \chi_{(0, \bar{t})} \right| f(y,\tau)  d \, y \, d \, \tau \nonumber \\
& \leq & \|f\|_{L^\infty(Q)} \int_0^2 \int_{B_2(0)} \left| G(x-y, t-\tau)- G(x-y, \bar{t} - \tau) \right| \chi_{(0,t)} d \, y \, d \, \tau \nonumber \\
& + & \|f\|_{L^\infty (Q)} \int_0^2 \int_{B_2(0)} G(x-y, \bar{t} - \tau) [ \chi_{(0,t)} - \chi_{(0,\bar{t})} ] d \, y \, d \, \tau  \nonumber \\
& = & \|f\|_{L^\infty (Q)} ( I_1 + I_2 ).
\label{3.30}
\end{eqnarray}

First estimate $I_1$. Divide the interval $(0,t)$ into two parts, $(0, \bar{t}-\delta)$ and $(\bar{t}-\delta, t)$. For the first part, we apply the Mean Value Theorem and let $\xi$ be some point between $\bar{t}$ and $t$. We derive
\begin{eqnarray}
& & \int_0^{\bar{t}-\delta}  \int_{B_2(0)} \left| G(x-y, t-\tau)- G(x-y, \bar{t} - \tau) \right| d \, y \, d \, \tau \nonumber \\
& \leq & C \, |t-\bar{t}| \,  \int_0^{\bar{t}-\delta}  \int_{B_2(0)} \frac{1}{(\xi -\tau)^{n/2 +2-s}} e^{-\frac{|x-y|^2}{4(\xi-\tau)}} \left[ 1 + \frac{|x-y|^2}{4(\xi-\tau)} \right] d \, y \, d \, \tau \nonumber \\
& \leq & C \, \delta \,  \int_0^{\bar{t}-\delta}  \frac{1}{(\xi -\tau)^{2-s}}  \int_{\mathbb{R}^n} e^{-|z|^2} (1+ |z|^2) \, d \, z \, d \, \tau \nonumber \\
& \leq & C_1 \, \delta ( \delta^{s-1} + 1) \nonumber \\
& \leq & C_2 \, \delta^s.
\label{3.31}
\end{eqnarray}

For the integral on $(\bar{t}-\delta, t)$, we estimate two terms separately.
\begin{eqnarray}
& & \int_{\bar{t}-\delta}^t \int_{B_2(0)} | G(x-y, t-\tau) | d \, y \, d \, \tau \nonumber \\
& \leq & \int_{\bar{t}-\delta}^t  \frac{1}{(t -\tau)^{n/2 +1-s}}\int_{\mathbb{R}^n} e^{-\frac{|x-y|^2}{4(t-\tau)}} d \, y \, d \, \tau \nonumber \\
& \leq & \int_{\bar{t}-\delta}^t  \frac{1}{(t -\tau)^{n/2 +1-s}}\int_{\mathbb{R}^n} e^{-|z|^2} d \, z \, d \, \tau \nonumber \\
& \leq & C \, \int_{\bar{t}-\delta}^t  \frac{1}{(t -\tau)^{1-s}} \, d \, \tau \nonumber \\
& \leq & C_1 \, \delta^s.
\label{3.32}
\end{eqnarray}

Similarly
\begin{equation}
\int_{\bar{t}-\delta}^t \int_{B_2(0)} | G(x-y, \bar{t}-\tau) | d \, y \, d \, \tau \leq  C \, \int_{\bar{t}-\delta}^t  \frac{1}{|\bar{t} -\tau|^{1-s}} \, d \, \tau \leq C_1 \delta^s.
\label{3.33}
\end{equation}

It follows from (\ref{3.31}), (\ref{3.32}), and (\ref{3.33}) that
\begin{equation}
I_1 \leq C \, \delta^s.
\label{3.34}
\end{equation}

Then we estimate $I_2$.
\begin{eqnarray}
I_2 &=& \int_{\bar{t}}^t  \frac{1}{|\bar{t} -\tau|^{n/2 +1-s}}\int_{B_2(0)} e^{-\frac{|x-y|^2}{4|\bar{t}-\tau|}} d \, y \, d \, \tau \nonumber \\
& \leq & C \,  \int_{\bar{t}}^t  \frac{1}{|\bar{t} -\tau|^{1-s}} \, d \, \tau \nonumber \\
& \leq & C_1 \, \delta^s.
\label{3.35}
\end{eqnarray}

Now from (\ref{3.30}), (\ref{3.34}), and (\ref{3.35}), we arrive at (\ref{Ewt1}). This completes the proof of Lemma \ref{lem4.7}.
\end{proof}

\subsubsection{Schauder estimate}

Assume that for all $x \in B_2(0)$, $f(t,x)$ is uniformly $\alpha/2$-H\"{o}lder continuous in $t$ on $(0,3)$, and we simply denote this norm by $\|f\|_{C_t^{\alpha/2}(Q)}$.
We prove
\begin{lemma} \label{lem4.9}
There exists a constant $C$ independent of  $x \in B_1(0)$ and $t, \bar{t} \in (1,2)$, such that
\begin{equation}
|w(x, t) - w(x, \bar{t})| \leq \left\{\begin{array}{ll} C \, \|f\|_{C_t^{\alpha/2}(Q)} |t-\bar{t}|^{s+\alpha/2}, & \mbox{ if } s + \alpha/2 <1, \\
C \, \|f\|_{C_t^{\alpha/2}(Q)} |t-\bar{t}| \, |\log \min\{|t-\bar{t}|, 1/2\}| & \mbox{ if } s + \alpha/2 =1.
\end{array}
\right.
\label{Ewt2}
\end{equation}
\end{lemma}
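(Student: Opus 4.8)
The plan is to follow the scheme of the proof of Lemma~\ref{lem4.7}, but to insert a \emph{time--freezing subtraction} so that the H\"older seminorm of $f$ in $t$ supplies the extra decay $\delta^{\alpha/2}$ which upgrades the exponent $s$ of Lemma~\ref{lem4.7} to $s+\alpha/2$. Normalize so that $\bar t<t$, set $\delta:=t-\bar t$ (so $\delta<1$ since $t,\bar t\in(1,2)$), write $M:=\|f\|_{C_t^{\alpha/2}(Q)}$, and recall $\|f\|_{L^\infty(Q)}\le M$. Because $f_Q$ is supported in $B_2(0)\times(0,3)$, we have $w(x,t)=\int_0^t\int_{B_2(0)}f(y,\tau)G(x-y,t-\tau)\,dy\,d\tau$ and likewise for $w(x,\bar t)$. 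Subtracting and adding $f(y,\bar t)$ in each integral and using the telescoping identity (from the substitutions $\sigma=t-\tau$ and $\sigma=\bar t-\tau$)
\[
\int_0^{\bar t}\big[G(x-y,t-\tau)-G(x-y,\bar t-\tau)\big]d\tau+\int_{\bar t}^t G(x-y,t-\tau)\,d\tau=\int_{\bar t}^t G(x-y,\sigma)\,d\sigma,
\]
I would write $w(x,t)-w(x,\bar t)=A_1+A_2+A_3$ with $A_1=\int_0^{\bar t}\int_{B_2(0)}[f(y,\tau)-f(y,\bar t)]\,[G(x-y,t-\tau)-G(x-y,\bar t-\tau)]\,dy\,d\tau$, $A_2=\int_{\bar t}^t\int_{B_2(0)}[f(y,\tau)-f(y,\bar t)]\,G(x-y,t-\tau)\,dy\,d\tau$, and $A_3=\int_{B_2(0)}f(y,\bar t)\int_{\bar t}^t G(x-y,\sigma)\,d\sigma\,dy$.

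The two tails are handled quickly. For $A_3$, on $\sigma\in(\bar t,t)\subset(1,2)$ one has $\int_{B_2(0)}G(x-y,\sigma)\,dy\le C_{n,s}\sigma^{s-1}\le C$, so $|A_3|\le CM\delta\le CM\delta^{s+\alpha/2}$, using $\delta<1$ and $s+\alpha/2\le1$. For $A_2$, on $\tau\in(\bar t,t)$ we have $|f(y,\tau)-f(y,\bar t)|\le M\delta^{\alpha/2}$ and $\int_{\bar t}^t\int_{B_2(0)}G(x-y,t-\tau)\,dy\,d\tau\le C\int_0^\delta\sigma^{s-1}\,d\sigma=C\delta^s/s$, whence $|A_2|\le CM\delta^{s+\alpha/2}$.

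The crux is $A_1$, which I would split at $\tau=\bar t-\delta$ (positive since $\delta<1<\bar t$). On $\tau\in(\bar t-\delta,\bar t)$ I would bound $|G(x-y,t-\tau)-G(x-y,\bar t-\tau)|\le G(x-y,t-\tau)+G(x-y,\bar t-\tau)$ and $|f(y,\tau)-f(y,\bar t)|\le M(\bar t-\tau)^{\alpha/2}\le M\delta^{\alpha/2}$; since $t-\tau\in(\delta,2\delta)$ there, both $\int_{\bar t-\delta}^{\bar t}\int_{B_2(0)}G(x-y,t-\tau)\,dy\,d\tau$ and $\int_{\bar t-\delta}^{\bar t}\int_{B_2(0)}G(x-y,\bar t-\tau)\,dy\,d\tau$ are $\le C\delta^s$, so this piece is $\le CM\delta^{s+\alpha/2}$. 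On $\tau\in(0,\bar t-\delta)$ I would apply the Mean Value Theorem in the time--lag, $G(x-y,t-\tau)-G(x-y,\bar t-\tau)=\delta\,\partial_\sigma G(x-y,\xi-\tau)$ for some $\xi=\xi(\tau)\in(\bar t,t)$, combined with the kernel bound that follows from \eqref{grn-t} as in \eqref{delt} and \eqref{3.31}, namely $\int_{B_2(0)}|\partial_\sigma G(x-y,\rho)|\,dy\le C\rho^{\,s-2}$, together with $\xi-\tau\ge\bar t-\tau\ge\delta$ (so $(\xi-\tau)^{s-2}\le(\bar t-\tau)^{s-2}$) and $|f(y,\tau)-f(y,\bar t)|\le M(\bar t-\tau)^{\alpha/2}$. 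This gives
\[
\Big|\int_0^{\bar t-\delta}\int_{B_2(0)}[f(y,\tau)-f(y,\bar t)]\,[G(x-y,t-\tau)-G(x-y,\bar t-\tau)]\,dy\,d\tau\Big|\le CM\,\delta\int_{\delta}^{\bar t}\rho^{\,s+\alpha/2-2}\,d\rho,
\]
which is $\le CM\delta^{s+\alpha/2}$ when $s+\alpha/2<1$ and $\le CM\,\delta\,|\log\delta|$ when $s+\alpha/2=1$. Adding the estimates for $A_1,A_2,A_3$, and in the borderline case replacing $|\log\delta|$ by $|\log\min\{\delta,1/2\}|$ (which only changes constants since $\delta<1$), yields \eqref{Ewt2}.

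The delicate point is the choice of decomposition, and in particular that $f$ must be frozen in \emph{time}. Merely comparing the two heat kernels after freezing $f$ at a point, as in Lemma~\ref{lem4.7}, recovers only the exponent $s$; and pulling the crude bound $M\delta^{\alpha/2}$ out of the whole common--time integral $A_1$ leaves the weight $\int_0^{\bar t}(t-\tau)^{s-1}\,d\tau$, which is only $O(1)$ rather than $O(\delta^s)$, so that route gives the \emph{worse} exponent $\delta^{\alpha/2}$. Freezing $f$ at the endpoint time $\bar t$ repairs this: the frozen part telescopes to the short--interval term $A_3$, while in $A_1$ and $A_2$ the factor $f(y,\tau)-f(y,\bar t)$ automatically carries the weight $(\bar t-\tau)^{\alpha/2}$ (resp.\ $(t-\tau)^{\alpha/2}$) which vanishes at the singular time $\tau=\bar t$ and converts the otherwise divergent Mean--Value--Theorem integral $\int_\delta^{\bar t}\rho^{\,s+\alpha/2-2}\,d\rho$ into the right power of $\delta$. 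Beyond this, the only bookkeeping I expect to need attention is verifying that all the time arguments stay in $(0,3)$ (so that $f_Q$ genuinely coincides with $f$) and handling the logarithmic borderline.
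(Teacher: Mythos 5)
Your proposal is correct and follows essentially the same route as the paper: freeze $f$ in time so the $C_t^{\alpha/2}$ seminorm contributes the extra factor $(\,\cdot\,)^{\alpha/2}$ in the kernel-difference part (handled by the Mean Value Theorem and the bound $\int_{B_2(0)}|\partial_\sigma G(x-y,\rho)|\,dy\le C\rho^{s-2}$, exactly as in \eqref{3.31}--\eqref{3.33}), while the frozen part telescopes to $\int_{\bar t}^{t}\int G(x-y,\sigma)\,dy\,d\sigma\le C\,|t-\bar t|$, which is the paper's $I_2$ in \eqref{3.38}. The only differences are cosmetic: you freeze $f$ at $\bar t$ instead of $t$ and write the frozen contribution in telescoped form from the outset, which if anything cleans up the paper's slightly garbled display \eqref{3.36}.
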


\begin{proof}
\begin{eqnarray}
&  & | w(x,t) - w(x, \bar{t}) | \nonumber \\
& = & \int_0^2 \int_{B_2(0)} \left| G(x-y, t-\tau) \chi_{(0,t)} - G(x-y, \bar{t} - \tau) \chi_{(0, \bar{t})} \right| f(y,\tau)-f(y,t)|  d \, y \, d \, \tau \nonumber \\
& + & \int_0^2 \int_{B_2(0)} \left| G(x-y, t-\tau) \chi_{(0,t)} - G(x-y, \bar{t} - \tau) \chi_{(0, \bar{t})} \right| |f(y,t)|  d \, y \, d \, \tau \nonumber \\
& \leq & \|f\|_{C_t^{\alpha/2}(Q)} \int_0^2 \int_{B_2(0)} \left| G(x-y, t-\tau) \chi_{(0,t)} - G(x-y, \bar{t} - \tau) \chi_{(0, \bar{t})} \right|t-\bar{t}|^{\alpha/2}  d \, y \, d \, \tau \nonumber \\
& + & \|f\|_{L^\infty (Q)} \int_0^2 \int_{B_2(0)} \left| G(x-y, t-\tau) \chi_{(0,t)} - G(x-y, \bar{t} - \tau) \chi_{(0, \bar{t})} \right|  d \, y \, d \, \tau \nonumber \\
& = & \|f\|_{C_t^{\alpha/2}(Q)} I_1 + \|f\|_{L^\infty (Q)} I_2 .
\label{3.36}
\end{eqnarray}

 Similar to the arguments in (\ref{3.31}), (\ref{3.32}), and (\ref{3.33}, we arrive at
\begin{equation}
I_1 \leq \left\{\begin{array}{ll} C \, \delta^{s + \alpha/2}, & \mbox{ if } s + \alpha /2 <1, \\
C \, \delta \, |\log \delta|, & \mbox{ if } s + \alpha /2 =1.
\end{array}
\right.
\label{3.37}
\end{equation}

While in the case $s + \alpha /2 =1$,

Now what left is to estimate $I_2$. We have, by making change of the variable,
$$
\int_0^2 \int_{B_2(0)} G(x-y, t-\tau) \chi_{(0,t)} \, d \, y \, d \, \tau = \int_0^t \int_{B_2(0)} G(x-y, \tau) \, d \, y \, d \, \tau.
$$
Similarly,
$$
\int_0^2 \int_{B_2(0)} G(x-y, \bar{t}-\tau) \chi_{(0,\bar{t})} \, d \, y \, d \, \tau = \int_0^{\bar{t}} \int_{B_2(0)} G(x-y, \tau) \, d \, y \, d \, \tau.
$$
Consequently,
\begin{equation}
I_2 \leq \int_{\bar{t}}^t G(x-y, \tau) \,d \, y \, d \, \tau \leq C \, \int_{\bar{t}}^t \frac{1}{\tau^{1-s}} d \, \tau = C_1 (t^s - \bar{t}^s) \leq C_2 \, |t-\bar{t}|.
\label{3.38}
\end{equation}

Combining (\ref{3.37}), (\ref{3.38}), and (\ref{3.36}), we establish (\ref{Ewt2}) and thus complete the proof of Lemma \ref{lem4.9}.
\end{proof}

\subsection{Conclusion of the estimates on w(x,t)}

Now Lemmas \ref{lem4.1}, \ref{lem4.2}, \ref{lem4.3}, and \ref{lem4.7} imply Theorem \ref{thm3.1} while Lemmas \ref{lem4.4}, \ref{lem4.5}, \ref{lem4.6}, and \ref{lem4.9} lead to Theorem \ref{thm3.2}. This concludes the section.

\section{A Priori Estimates of Solutions}

The regularity results established in the previous sections make it possible to carry out the blowing up and re-scaling arguments.
Now in this section, we will apply these arguments to obtained a priori estimates of nonnegative solutions for a family of master equations in the whole space.

We first consider
\begin{equation}
(\partial_t - \Delta)^s u(x,t) = f(x, u(x,t)), \;\; (x,t) \in \mathbb{R}^n \times \mathbb{R}.
\label{4.1}
\end{equation}

\begin{theorem} \label{thm4.1}
Assume that $u$ is a nonnegative solution of (\ref{4.1}) and

(f1) $f(x, \tau)$ is uniformly H\"{o}lder continuous with respect to $x$ and continuous  with respect to $\tau$.

 (f2) $0\leq f(x, \tau) \leq C_0(1+\tau^p)$  uniformly for all $x$, and
 $$
 \mathop{\lim}\limits_{\tau \to \infty}\frac{f(x, \tau)}{\tau^p}=K(x), \,\, 1<p<\frac{n+2}{n+2-2s},
 $$
 where $K(x)\in (0, \infty)$ is uniformly continuous and $ \mathop{\lim}\limits_{|x|\to \infty} K(x)=\bar C\in (0, \infty)$.
 Then there exists a constant $C$, such that
 \begin{equation} \label{4.2}
 u(x,t) \leq C, \,\,\, \forall \, (x,t) \in \mathbb{R}^n \times \mathbb{R}
 \end{equation}
 for all nonnegative solutions $u$ of \eqref{4.1}.
 \end{theorem}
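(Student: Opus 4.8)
The plan is a proof by contradiction via blow-up and rescaling, where --- and this is the point made possible by Sections 3--4 --- the rescaled sequence is controlled using only \emph{local} bounds. Suppose \eqref{4.2} fails: there are nonnegative solutions $u_k$ of \eqref{4.1} and points $z_k=(x_k,t_k)$ with $u_k(z_k)\to\infty$. First I would apply a doubling lemma of Pol\'a\v{c}ik--Quittner--Souplet type (see \cite{PQ,QS}) to the functions $M_k:=u_k^{(p-1)/2s}$ on $\R^n\times\R$, which is complete in the parabolic metric and has empty boundary, with a parameter $R_k\to\infty$ chosen to grow slowly. This produces points $\bar z_k=(\bar x_k,\bar t_k)$ with $m_k:=u_k(\bar z_k)\to\infty$ and
$$ u_k(z)\le 2^{2s/(p-1)}m_k\quad\text{for all }z\in Q_{R_k\lambda_k}(\bar z_k),\qquad \lambda_k:=m_k^{-(p-1)/2s}\to 0, $$
where $p>1$ forces $\lambda_k\to0$. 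Setting $v_k(x,t):=m_k^{-1}u_k(\bar x_k+\lambda_k x,\bar t_k+\lambda_k^2 t)$ and using that $(\partial_t-\Delta)^s$ has order $2s$ under parabolic scaling (so $(\partial_t-\Delta)^sv_k(x,t)=m_k^{-1}\lambda_k^{2s}f(\bar x_k+\lambda_k x,m_k v_k(x,t))$) together with $\lambda_k^{2s}=m_k^{-(p-1)}$, one gets $v_k(0,0)=1$, $0\le v_k\le 2^{2s/(p-1)}$ on $Q_{R_k}(0,0)$, and $(\partial_t-\Delta)^sv_k=g_k$ on $Q_{R_k}(0,0)$ with $g_k(x,t):=m_k^{-p}f\bigl(\bar x_k+\lambda_k x,m_k v_k(x,t)\bigr)$ and $|g_k|\le C_0\bigl(m_k^{-p}+2^{2sp/(p-1)}\bigr)\le C$ by (f2); note the time variable still ranges over all of $\R$, so no half-space appears.

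Next I would invoke the new interior estimates for compactness. For fixed $R\ge1$ and all $k$ with $R_k\ge2R$, Theorem \ref{mthmu1} on the pair $Q_R\subset\subset Q_{2R}$ (with the $k$-independent constant guaranteed by the remark allowing the cylinders to be recentered and rescaled for radius $\ge1$, and recalling these estimates are for nonnegative $u$ with $f\ge0$) gives $\|v_k\|_{C^{2s,s}_{x,t}(Q_R)}\le C(R)$, with the Log-Lipschitz analogue when $s=1/2$; in either case $\{v_k\}$ is locally equicontinuous and locally bounded, so a diagonal subsequence converges locally uniformly to some $v$. Since $v_k\le 2^{2s/(p-1)}$ on $Q_{R_k}$ with $R_k\to\infty$, the limit satisfies $0\le v\le 2^{2s/(p-1)}$ on all of $\R^n\times\R$ and $v(0,0)=1$. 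The limit of the right-hand side is identified by a dichotomy on $\bar x_k$: along a subsequence either $\bar x_k\to\bar x_\infty$ or $|\bar x_k|\to\infty$, hence $\bar x_k+\lambda_k x$ tends locally uniformly to $\bar x_\infty$ or to infinity; by (f2) and the uniform continuity of $K$, $g_k=\tfrac{f(\bar x_k+\lambda_k x,m_k v_k)}{(m_k v_k)^p}v_k^p\to K_\infty v^p$ locally uniformly, where $K_\infty:=K(\bar x_\infty)$ or $K_\infty:=\bar C$, both in $(0,\infty)$ (on $\{v=0\}$ the bound $0\le g_k\le C_0(m_k^{-p}+v_k^p)$ forces $g_k\to0=K_\infty v^p$).

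The main obstacle is passing to the limit in the equation, precisely because $v_k$ is controlled only on $Q_{R_k}$ whereas the master operator is nonlocal in all of space-time, so one cannot directly feed the limit into the singular integral or test against a fixed compactly supported function. Here I would argue through the integral representation: by Theorem \ref{mthm1}, $v_k=\tilde c_k+\int_{-\infty}^t\!\!\int_{\R^n}g_k(y,\tau)G(x-y,t-\tau)\,dy\,d\tau$ with $0\le\tilde c_k\le1$ (since $0\le c_k\le u_k(\bar z_k)=m_k$; along a further subsequence $\tilde c_k\to\tilde c\in[0,1]$), and $g_k\ge0$ since $f\ge0$. Combining $\int\!\!\int g_kG=v_k-\tilde c_k\le 2^{2s/(p-1)}$ on $Q_1(0,0)$ with Fatou's lemma and $g_kG\to K_\infty v^pG$ pointwise shows that $\tilde v:=\tilde c+\int_{-\infty}^t\!\!\int_{\R^n}K_\infty v(y,\tau)^pG(x-y,t-\tau)\,dy\,d\tau$ is finite, satisfies $0\le\tilde v\le v$, and --- since $v^p$ is Hölder continuous whenever $v$ is bounded and Hölder and $p>1$ --- is, by the Schauder estimate of Theorem \ref{mthmu2}, a classical bounded nonnegative solution of $(\partial_t-\Delta)^s\tilde v=K_\infty v^p\ge K_\infty\tilde v^p$ on $\R^n\times\R$. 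The delicate remaining point is to exclude loss of mass at spatial or temporal infinity, so that the normalization $v(0,0)=1$ carries over to $\tilde v\not\equiv0$; this is exactly where the local estimates of Sections 3--4 and the sign condition $f\ge0$ must be exploited with care, and I expect it to be the technical heart of the argument.

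Finally, once $\tilde v$ is a bounded nonnegative nontrivial supersolution of $(\partial_t-\Delta)^s\tilde v\ge K_\infty\tilde v^p$, absorbing the positive constant by $V:=K_\infty^{1/(p-1)}\tilde v$ reduces matters to $(\partial_t-\Delta)^sV\ge V^p$ with $0\le V\le C$ and $V\not\equiv0$. A Liouville-type nonexistence theorem for the master equation in the subcritical range $1<p<\tfrac{n+2}{n+2-2s}$ --- the fractional-parabolic Fujita exponent, which recovers $\tfrac{n+2}{n}$ as $s\to1$, and which is precisely what the hypothesis on $p$ in (f2) is tailored to --- then produces a contradiction, establishing the a priori bound \eqref{4.2}.
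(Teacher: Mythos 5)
Your proposal follows the same blow-up skeleton as the paper: contradiction, a doubling-type selection of $(\bar x_k,\bar t_k)$ and $\lambda_k$, parabolic rescaling to $v_k$ with $v_k(0,0)=1$ and $0\le v_k\le 2^{2s/(p-1)}$ on $Q_{R_k}(0,0)$, uniform interior bounds from Theorems \ref{mthmu1} and \ref{mthmu2}, a diagonal limit $v$, and identification of the limiting nonlinearity $K_\infty v^p$ via (f2). It genuinely diverges, however, at the passage to the limit in the nonlocal equation. The paper quotes the operator-convergence result of \cite{CGL}, which gives $(\partial_t-\Delta)^s v_k\to(\partial_t-\Delta)^s v-b$ for some constant $b\ge 0$, and then kills $b$ by noting that the Green potential of a positive constant diverges; hence the limit $v$ itself solves $(\partial_t-\Delta)^s v=K(\bar x)v^p$, and the Liouville theorem of \cite{FP} for bounded positive \emph{solutions} applies. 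You instead push the representation of Theorem \ref{mthm1} through the rescaling and use Fatou; since $g_k$ is uncontrolled outside $Q_{R_k}(0,0)$, Fatou only yields a one-sided inequality and hence an auxiliary function $\tilde v\le v$ with $(\partial_t-\Delta)^s\tilde v=K_\infty v^p\ge K_\infty\tilde v^p$, i.e.\ a bounded \emph{supersolution} rather than a solution. Incidentally, the step you flag as the ``technical heart'' (nontriviality of $\tilde v$) is actually immediate in your setup: the source in the defining integral is $K_\infty v^p$ with $v$ continuous and $v(0,0)=1$, so $v\ge 1/2$ on some $Q_r(0,0)$, and positivity of $G$ gives $\tilde v(x,t)>\tilde c\ge 0$ for all $t>-r^2$.

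The genuine gap is the final Liouville input. Because your reduction ends with a supersolution inequality, you need nonexistence of bounded, nontrivial, nonnegative entire supersolutions of $(\partial_t-\Delta)^s V\ge V^p$ on $\R^n\times\R$, which is strictly more than what the paper uses: \cite{FP}, Theorem 1.1, as invoked there, concerns bounded positive solutions of the equation. You neither prove this supersolution version nor cite a source for it, so as written the contradiction is not reached. Two repairs are available: either (i) close the loop the paper's way, proving that the limit $v$ itself satisfies the equation — this is exactly what \cite{CGL} together with the $b=0$ argument accomplishes, and your Fatou step alone cannot rule out the ``missing mass,'' which is precisely the analogue of $b$; or (ii) prove the supersolution Liouville statement directly, e.g.\ by exploiting your own inequality $\tilde v\ge\tilde c+\int\!\!\int K_\infty\tilde v^p\,G$ and the divergence of $\int\!\!\int G$, but that is an additional argument that must be supplied, not a quotation of \cite{FP}. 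A smaller point: asserting that the Green potential of the bounded, locally H\"{o}lder function $K_\infty v^p$ is a classical solution is not literally Theorem \ref{mthmu2}, which is an a priori estimate; it requires the kind of verification the paper carries out for $w_R$, routine given Sections 2--4 but worth stating.
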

 The above a priori estimate holds for all $0<s<1$.
 \medskip

Then in the case $1/2 <s <1$, we study more general equation
 \begin{equation} \label{4.2a}
 (\partial_t - \Delta)^s u(x,t) = b(x)|\nabla_x u(x,t)|^q + f(x, u(x,t)), \;\; (x,t) \in \mathbb{R}^n \times \mathbb{R}
 \end{equation}
 and establish a priori estimates for its non-negative solutions. Here $\nabla_x u$ is the gradient of $u$ with respect to $x$.

\begin{theorem} \label{thm4.2} Assume that
  $b(x):  \mathbb{R}^n \to \mathbb{R}$  is nonnegative, uniformly bounded and  H\"{o}lder continuous.

If $s>\frac{1}{2},$ then there exists a positive constant $C$ such that
$$
u(x,t)+|\nabla_x u|^{\frac{2s}{2s+p-1}}(x,t)\leq C,  \;\;\; \forall \, (x,t) \in \mathbb{R}^n \times \mathbb{R},
$$
for all nonnegative solutions $u$.
\end{theorem}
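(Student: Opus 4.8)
The plan is to argue by contradiction via a blow-up and doubling scheme, using the interior estimates of Theorems \ref{mthmu1} and \ref{mthmu2} — whose whole point is that they require only \emph{local} control of the solution — to extract a blow-up limit, and then invoking the Liouville theorem that already underlies Theorem \ref{mthm7}. Throughout set $\gamma:=\tfrac{2s}{p-1}$, and recall the hypotheses $s>\tfrac12$ and $0<q<\tfrac{2sp}{2s+p-1}$. Introduce the scale-invariant quantity
$$ M(x,t):=u(x,t)^{\frac{p-1}{2s}}+|\nabla_x u(x,t)|^{\frac{p-1}{2s+p-1}}, $$
which is continuous on $\mathbb{R}^n\times\mathbb{R}$ (here $s>\tfrac12$ enters, so that $u\in C^{2s+\epsilon,\,s+\epsilon}_{x,t,\mathrm{loc}}$ has a continuous spatial gradient) and which, under the parabolic rescaling $u\mapsto\lambda^{\gamma}u(\lambda x,\lambda^2 t)$, transforms with weight $\lambda$, i.e. behaves like a reciprocal length. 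Suppose $M$ is unbounded. By the parabolic form of the Pol\'a\v{c}ik--Quittner--Souplet doubling lemma (with respect to $d((x,t),(x',t'))=\max\{|x-x'|,|t-t'|^{1/2}\}$) there are points $z_k=(x_k,t_k)$ with $M(z_k)\to\infty$ such that $M(z)\le 2M(z_k)$ whenever $d(z,z_k)\le R_k/M(z_k)$, with $R_k\to\infty$. Put $\mu_k:=1/M(z_k)\to0$ and rescale
$$ v_k(x,t):=\mu_k^{\gamma}\,u\bigl(\mu_k x+x_k,\ \mu_k^2 t+t_k\bigr). $$
Then $M_{v_k}(0,0)=1$, with $M_{v_k}$ built from $v_k$ as $M$ from $u$, and $M_{v_k}\le 2$ on $Q_{R_k}(0,0)$; hence $v_k$ and $\nabla_x v_k$ are bounded on each fixed $Q_R(0,0)$ by a constant independent of $k$.

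\textbf{The rescaled equations and the limit.} A direct computation gives, on $Q_{R_k}(0,0)$,
$$ (\partial_t-\Delta)^s v_k=\mu_k^{\theta}\,b(\mu_k x+x_k)\,|\nabla_x v_k|^q+f_k(x,v_k),\qquad \theta:=\frac{2sp-(2s+p-1)q}{p-1}, $$
with $f_k(x,\tau):=\mu_k^{\gamma+2s}f(\mu_k x+x_k,\mu_k^{-\gamma}\tau)$. The condition $q<\tfrac{2sp}{2s+p-1}$ is \emph{exactly} what makes $\theta>0$, so the coefficient $\mu_k^{\theta}$ of the (uniformly bounded) gradient term tends to $0$; and by $(f2)$ together with the bounds on $v_k$ one has $f_k(x,\tau)\to K_\ast\tau^p$ locally uniformly for $\tau\ge0$, where along a subsequence $K_\ast=K(x^\ast)\in(0,\infty)$ if $x_k\to x^\ast$ and $K_\ast=\bar C$ if $|x_k|\to\infty$. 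Since the right-hand sides are uniformly bounded on each $Q_R(0,0)$, Theorem \ref{mthmu1} yields uniform $C^{2s,s}_{x,t}(Q_{R/2})$ bounds for $v_k$; as $s>\tfrac12$ this already controls $v_k$ and $\nabla_x v_k$ in H\"older norms, so the right-hand sides become uniformly H\"older on compact sets, and one application of the Schauder estimate (Theorem \ref{mthmu2}) upgrades this to uniform $C^{2s+\alpha,\,s+\alpha/2}_{x,t}$ bounds locally. A diagonal subsequence then converges, locally in $C^2$ in $x$ and $C^1$ in $t$, to a nonnegative limit $v$, which is bounded on $\mathbb{R}^n\times\mathbb{R}$ by the constant inherited from the $M_{v_k}\le2$ bound.

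\textbf{The contradiction and conclusion.} Passing to the limit in the equation, $v$ is a bounded nonnegative entire solution of $(\partial_t-\Delta)^s v=K_\ast v^p$ with $1<p<\tfrac{n+2}{n+2-2s}$. Moreover $M_v(0,0)=1$, and since $v\ge0$, if $v(0,0)=0$ then $(0,0)$ is a spatial minimum of $v$, forcing $\nabla_x v(0,0)=0$ and hence $M_v(0,0)=0$, a contradiction; thus $v(0,0)>0$ and $v\not\equiv0$. After the trivial normalization $v\mapsto K_\ast^{1/(p-1)}v$ this contradicts the Liouville theorem for the subcritical fully fractional Fujita equation that underlies Theorem \ref{mthm7}. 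Therefore $M$ is bounded, say $M\le C_0$; unwinding the definition gives $u\le C_0^{\,2s/(p-1)}$ and $|\nabla_x u|\le C_0^{\,(2s+p-1)/(p-1)}$, hence $|\nabla_x u|^{\frac{2s}{2s+p-1}}\le C_0^{\,2s/(p-1)}$ and finally $u+|\nabla_x u|^{\frac{2s}{2s+p-1}}\le C$ on $\mathbb{R}^n\times\mathbb{R}$, as claimed.

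\textbf{Main obstacle.} The most delicate point is the passage to the limit in the \emph{nonlocal} operator: the doubling argument controls $v_k$ only on the growing cylinders $Q_{R_k}(0,0)$, whereas $(\partial_t-\Delta)^s v_k(x,t)$ sees the values of $v_k$ over all of space-time. I would handle this exactly as in the decomposition $u=v+w$ used to prove Theorems \ref{mthmu1}--\ref{mthmu2}: write $v_k=w_k+v_k^\sharp$, where $w_k$ is the Green potential of the right-hand side truncated to $Q_{R_k}$ (globally bounded, with local norms controlled by Theorems \ref{mthmw1}--\ref{mthmw2}) and $v_k^\sharp$ solves the homogeneous equation on $Q_{R_k}$ and is bounded there, so that Theorem \ref{mthmv} and the kernel estimate of Lemma \ref{key0} provide interior bounds for $v_k^\sharp$ independent of its global growth; one then passes to the limit in each piece and checks that the limiting potentials assemble into a genuine entire solution. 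A secondary issue — the sign of $b$ and of $f(x,\cdot)$ for moderate values of its argument — enters the $w_k$-part of the estimates only through absolute values and so does not disrupt the scheme; and one must keep in mind that $s>\tfrac12$ is used twice, to ensure $2s>1$ (so the interior estimates actually control $\nabla_x u$) and to place the exponent $\tfrac{2s}{2s+p-1}$ in the range for which the blow-up of Theorem \ref{mthm7} applies.
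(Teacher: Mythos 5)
Your overall strategy is the same as the paper's: negate the bound, select good blow-up centers by a doubling-type argument applied to $M=u^{\frac{p-1}{2s}}+|\nabla_x u|^{\frac{p-1}{2s+p-1}}$, rescale so that $M_{v_k}(0,0)=1$ and $M_{v_k}\le 2$ on growing cylinders, use the purely local estimates of Theorems \ref{mthmu1} and \ref{mthmu2} to get uniform $C^{2s+\varepsilon,s+\varepsilon}_{x,t}$ bounds, note that $q<\frac{2sp}{2s+p-1}$ kills the gradient term in the limit, and contradict the nonexistence theorem of \cite{FP}. Up to the choice of using the Pol\'a\v{c}ik--Quittner--Souplet doubling lemma instead of the paper's explicit auxiliary function $S_k$, this is the paper's proof.

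The genuine gap is exactly at the point you flag as the ``main obstacle'' and then do not resolve: the identification of the limit equation. Because $v_k$ is controlled only on $Q_{R_k}(0,0)$ while $(\partial_t-\Delta)^s$ sees all of $\mathbb{R}^n\times(-\infty,t)$, the pointwise limit of $(\partial_t-\Delta)^s v_k$ need not be $(\partial_t-\Delta)^s v$; the paper invokes the convergence theorem of \cite{CGL}, which only gives $\lim_k(\partial_t-\Delta)^s v_k=(\partial_t-\Delta)^s v-b$ with an unknown constant $b\ge 0$ produced by the uncontrolled tails, and then runs a separate argument to show $b=0$: represent the limit via the Green potential, apply the maximum principle, and use that $\int_{-\infty}^t\int_{\mathbb{R}^n}G(x-y,t-\tau)\,dy\,d\tau=\infty$, so $b>0$ would force $v$ to be unbounded. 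Your proposal simply asserts that $v$ solves $(\partial_t-\Delta)^s v=K_\ast v^p$, and your proposed fix (splitting $v_k$ into the potential of the truncated right-hand side plus a homogeneous part and ``passing to the limit in each piece'') does not close this: the contributions from $Q_{R_k}\setminus Q_R$ and from outside $Q_{R_k}$, about which you know only nonnegativity, in general survive in the limit precisely as such an additive constant (the homogeneous parts converge, if at all, to an entire nonnegative solution of the homogeneous equation, i.e.\ a constant by Proposition \ref{Liouville}), so the vanishing of $b$ must be proved, not assumed. Note also that your decomposition presupposes a global integral representation of $v_k$ in the spirit of Theorem \ref{mthm1}, which requires the full right-hand side $b(x)|\nabla_x u_k|^q+f(x,u_k)$ to be nonnegative on all of $\mathbb{R}^n\times\mathbb{R}$, a point that needs comment when $b$ may change sign. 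The remaining minor imprecisions (working with a single unbounded solution rather than a sequence $u_k$, and the locally uniform convergence $f_k(x,\tau)\to K_\ast\tau^p$ near $\tau=0$, which should be handled via the bound $f\le C_0(1+\tau^p)$ as in the paper) are easily repaired; the missing $b=0$ step is the substantive omission.
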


\subsection{Proof of Theorem \ref{thm4.1}}

\begin{proof}

We will prove Theorem \ref{thm4.1} by a contradiction argument. Suppose (\ref{4.2}) is violated, then there exist a sequence of solutions $u_k$ of (\ref{4.1}) and a sequence of points  $(x^k, t_k)$ such that
$$u_k(x_k, t_k) \to \infty, \mbox{ as } k \to \infty. $$

For each fixed $R$, let
$$
R_k:=2 R u_k^{-\frac{p-1}{2s}}(x_k, t_k).
$$

Denote $X =(x,t)$ and $X_k = (x_k, t_k)$.
Define a function
$$
S_k(X)=u_k(X)(R_k-|X-X_k|)^{\frac{2s}{p-1}},\,\, X \in B_{R_k}(X_k).
$$
Then there exists $A_k \in B_{R_k}(X_k)$ such that
$$
S_k(A_k)=\mathop{\max}\limits_{B_{R_k}(X_k)} S_k(X).
$$
It follows that
\begin{eqnarray}\label{eq4-3}
u_k(X)\leq u_k(A_k)\frac{(R_k-|A_k-X_k|)^{\frac{2s}{p-1}}}{(R_k-|X-X_k|)^{\frac{2s}{p-1}}},\,\, X \in B_{R_k}(X_k).
\end{eqnarray}

Set
$$
\lambda_k=(u_k(A_k))^{-\frac{p-1}{2s}}.
$$
Taking $X=X_k$ in \eqref{eq4-3} yields
 \begin{eqnarray}\label{eq4-4}
\frac{u_k(X_k)}{u_k(A_k)}\leq \frac{(R_k-|A_k-X_k|)^\frac{2s}{p-1}}{R_k^{\frac{2s}{p-1}}},
\end{eqnarray}
which implies  that
$$
u_k(X_k)\leq u_k(A_k).
$$
Using the definition of $R_k$ and $\lambda_k$, one gets from \eqref{eq4-4} that
 \begin{eqnarray}\label{eq4-5}
2R \lambda_k\leq R_k-|A_k-X_k|.
\end{eqnarray}

In addition, one can derive
\begin{eqnarray}\label{eq4-6}
R_k-|A_k-X_k|\leq 2(R_k-|X-X_k|),\,\, X\in B_{R \lambda_k}(A_k).
\end{eqnarray}
Indeed, for any $X \in B_{R \lambda_k}(A_k)$, one has
$$
|X-X_k|\leq |X-A_k|+|A_k-X_k|\leq R\lambda_k +R_k-2R\lambda_k \leq R_k
$$
due to \eqref{eq4-5}. It follows that  $X\in B_{R_k}(X_k)$
and thus
$B_{R\lambda_k}(A_k)\subset B_{R_k}(X_k).$ Now for any $X\in B_{R\lambda_k}(A_k)$, using \eqref{eq4-5}, one derives
\begin{eqnarray*}
\begin{aligned}
	2(R_k-|X-X_k|)&\geq 2(R_k-(|X_k-A_k|+|X-A_k|))\\
	&\geq 2(R_k -(|X_k-A_k|+R\lambda_k))\\
	&\geq R_k -|X_k-A_k|+2R\lambda_k-2R\lambda_k\\
	&\geq R_k -|X_k-A_k|.
\end{aligned}
\end{eqnarray*}
This implies \eqref{eq4-6}.

The combination of  \eqref{eq4-3} with \eqref{eq4-6} yields
\begin{eqnarray}\label{eq4-7}
u_k(X)\leq u_k(A_k) 2^\frac{2s}{p-1},\,\, x\in B_{R \lambda_k}(A_k).
\end{eqnarray}

Let $\bar{R} = R/\sqrt{n+1}$ and denote $A_k = (\bar{x}_k, \bar{t}_k)$,  then it is obvious that the parabolic cylinder
$$ Q_{\bar{R} \lambda_k}(\bar{x}_k, \bar{t}_k)  = \{(x,t) \mid |x-\bar{x}_k| < \bar{R} \lambda_k, \;\; |t-\bar{t}|< \bar{R}^2 \lambda_k^2 \}$$ is contained in
$B_{R\lambda_k}(A_k)$ for sufficiently large $k$.

 We now re-scale the solutions as
 $$
   v_k(x,t)=\frac{1}{u_k(\bar{x}_k, \bar{t}_k)}u_k(\lambda_kx+\bar{x}_k, \lambda_k^2 t + \bar{t}_k), \,\, (x,t) \in Q_{\bar{R}}(0,0).
 $$
Then it follows from (\ref{4.1}) that $v_k$ solves
\begin{eqnarray}\label{eq4-8}
(\partial_t -\Delta)^s v_k(x,t) &=&
\lambda_k^\frac{2sp}{p-1} f(\lambda_kx+\bar{x}_k, \lambda_k^{-\frac{2s}{p-1}}v_k(x,t)) \nonumber \\
&:=&F_k(x, v_k(x,t)),\,\, (x,t)\in Q_{\bar{R}}(0,0).
\end{eqnarray}
Moreover, it follows from the definition of $v_k$ and \eqref{eq4-7} that
\begin{eqnarray}\label{eq4-9}
v_k(0,0)=1
\end{eqnarray}
and
\begin{eqnarray}\label{eq4-10}
v_k(x,t)\leq 2^\frac{2s}{p-1},\,\, (x,t)\in Q_{\bar{R}}(0,0).
\end{eqnarray}
Then due to \eqref{eq4-10} and the assumptions (f1) and (f2),  one derives
\begin{eqnarray*}
\begin{aligned}
F_k(x, v_k(x,t))
=&\lambda_k^\frac{2sp}{p-1} f(\lambda_kx+\bar{x}_k, \lambda_k^{-\frac{2s}{p-1}}v_k(x,t))\\
\leq &C_0\lambda_k^\frac{2sp}{p-1}\left(1+\lambda_k^{-\frac{2sp}{p-1}}v_k^p(x,t)\right)\\
\leq &C_0\lambda_k^\frac{2sp}{p-1}+C_0 v_k^p(x,t)\\
\leq &C_1.
\end{aligned}
\end{eqnarray*}
Applying Theorem \ref{mthmu1} to  \eqref{eq4-8} yields
\begin{eqnarray*}
\|v_k\|_{C_{x,t}^{2s-\varepsilon, s}(Q_{\bar{R}/2}(0,0))} \leq C_2
\end{eqnarray*}
for small $0<\varepsilon.$
It follows that $F_k(x, v_k(x,t))$ is H\"{o}lder continuous
in $Q_{\bar{R}}(0,0)).$ Then applying Theorem \ref{mthmu2}, we conclude that there exist some $\varepsilon >0$
and $C_3$ independent of $k$ such that
\begin{eqnarray}\label{eq4-11}
\|v_k\|_{C_{x,t}^{2s+\varepsilon, s + \varepsilon}(Q_{\bar{R}/4}(0,0))}\leq C_3.
\end{eqnarray}

 The above uniform regularity estimates holds in $Q_{\bar{R}/4}(0,0)$ for any $\bar{R}$. Let $\bar{R} \to \infty$, we conclude there exists
 a converging subsequence of $\{v_k(x,t)\}$ (stilled denoted by $\{v_k(x,t)\}$) that converges pointwise in
 $\mathbb{R}^n \times \R$ to a function $v(x,t).$
Taking limit in \eqref{eq4-9} yields
\begin{eqnarray}\label{eq4-12}
v(0,0)=1,	
\end{eqnarray}
and
\begin{eqnarray}
v(x,t)\leq 2^\frac{2s}{p-1},\,\, (x,t)\in \R^n\times\R.
\end{eqnarray}
 Moreover, $\{v_k(x,t)\}$ converges to $v(x,t)$ locally in $C_{x,t}^{2s+\varepsilon, s + \varepsilon}(\R^n \times \R)$, and hence  $(\partial_t -\Delta)^s v_k$ converges pointwise in $\mathbb{R}^n \times \R$ and $(\partial_t-\Delta)^sv(x,t)$ is uniformly bounded in $\R^n\times\R.$

Applying a result in \cite{CGL} we obtain
\begin{eqnarray}\label{eq4-13}
\lim_{k \to \infty} (\partial_t -\Delta)^s v_k(x,t)= (\partial_t-\Delta)^s v(x,t)-b, \;\; \forall \, (x,t) \in \mathbb{R}^n \times \R
\end{eqnarray}
with some nonnegative constant $b$.

In addition, for any fixed $x\in \mathbb{R}^n,$ if $\{\lambda_k x+a_k\}$ is bounded, we assume that $\lambda_k x+a_k \to \bar x$  by extracting a further
subsequence. Therefore, the assumption (f2) implies
\begin{eqnarray}\label{eq4-14}
\begin{aligned}
&\lambda_k^{\frac{2sp}{p-1}} f(\lambda_k x+a_k, \lambda_k^{-\frac{2s}{p-1}} v_k(x,t))\\
= & (v_k(x,t))^p\frac{f(\lambda_k x+a_k, \lambda_k^{-\frac{2s}{p-1}} v_k(x,t))}{\left(\lambda_k^{-\frac{2s}{p-1}} v_k(x,t)\right)^p}\\
\to& K(\bar x) v^p(x,t)\,\, \mbox{ as }\,\, k\to \infty.
\end{aligned}
\end{eqnarray}
This is  still valid in the case $|\lambda_k x+a_k| \to \infty$ with $|\bar x|= \infty$ due to (f2).

Consequently,  it follows from \eqref{eq4-13} and \eqref{eq4-14} that $v(x,t)$ is a solution of
\begin{eqnarray}\label{eq4-15}
(\partial_t-\Delta)^s v(x,t)=K(\bar x) v^p(x,t) + b,\,\, x\in \mathbb{R}^n.
\end{eqnarray}

We show that $b$ must be $0.$

Let $$F_R(x,t) = \left\{\begin{array}{ll} K(\bar{x})v^p(x,t) + b, & x \in Q_R(0,0)\\
0, & x \in Q_R^C(0,0).
\end{array}
\right.
$$
$$
v_R(x,t)=\int_{-\infty}^t \int_{\mathbb{R}^n} F_R(y,\tau) G(x-y, t-\tau) dy \, d \tau.
$$

Then it is easy to see that
$$
\left\{\begin{array}{ll} (\partial_t-\Delta)^s v_R(x,t)= F_R(x,t), \;\;  (x,t) \in \mathbb{R}^n \times \R\\
 v_R(x,t) \to 0, \; \mbox{ as } |x| \to \infty \mbox{ or as } t \to -\infty.
\end{array}
\right.
$$

Denote
$$
w_R(x,t)=v(x,t)-v_R(x,t), \,\, (x,t) \in \mathbb{R}^n \times \R.
$$
Then
\begin{eqnarray*}
\begin{cases}
(-\Delta)^s w_R(x,t)\geq 0,& (x,t) \in \mathbb{R}^n \times \R,\\
\lim_{|x| \to \infty \mbox{ or } t \to -\infty}  w_R(x,t) \geq 0.
\end{cases}
\end{eqnarray*}
By the maximum principle, we have
$$
w_R(x,t) \geq 0,\,\, (x,t) \in \mathbb{R}^n \times \R.
$$

Now let $R \to \infty$, we arrive at
$$
v(x,t)\geq \int_{-\infty}^t \int_{\mathbb{R}^n} G(x-y, t-\tau) \left(K(\bar x) v^p(y, \tau)+b \right)dy \, d \tau, \,\, x \in \mathbb{R}^n.
$$
If $b>0,$ then the integral on the right hand side is greater or equal to
$$ b \int_{-\infty}^t \int_{\mathbb{R}^n} G(x-y, t-\tau) dy \, d \tau$$
which is obviously divergent. This contradicts the bounded-ness of $v(x,t)$.
Therefore, we must have $b=0$ and
\begin{equation} \label{5.50}
(\partial_t-\Delta)^s v(x,t)=K(\bar x) v^p(x,t),\,\, (x,t) \in \mathbb{R}^n \times \R.
\end{equation}
Moreover, from \eqref{eq4-12} and  the maximum principle,
one derives that
$$v(x,t)>0,\,\, \forall (x,t) \in \mathbb{R}^n \times \R.$$
Now we arrive at a bounded positive solution $v$ of \eqref{5.50}. This contradicts the non-existence results in \cite{FP} (Theorem 1.1) if $1< p < \frac{n+2}{n+2-2s}$.

Now we  complete the proof of Theorem \ref{thm4.1}.

\end{proof}

\subsection{Proof of Theorem \ref{thm4.2}}

\begin{proof}
Now we assume that $1> s>\frac{1}{2}$ and derive a priori estimate for nonnegative solutions of equation (\ref{4.2a}).
We show that
\begin{eqnarray}\label{eq4-16}
u(x,t)+|\nabla_x u|^{\frac{2s}{2s+p-1}}(x,t)\leq C,\,\, \forall \, (x,t) \in \mathbb{R}^n \times \mathbb{R}
\end{eqnarray}
for all nonnegative solutions $u$. Again it will be acomplished by a blow-up and re-scaling argument.

If \eqref{eq4-16} is not valid, then there exist a sequence of solutions $u_k$,
$$
(\partial_t -\Delta)^s u_k(x,t)=b(x)|\nabla_x u_k|^q(x,t)+f(x, u_k(x,t)), \,\, \forall \, (x,t) \in \mathbb{R}^n \times \mathbb{R}
$$
with
$$
M_k(x_k, t_k):=u_k^{\frac{p-1}{2s}}(x_k, t_k)+|\nabla_x u_k|^{\frac{p-1}{2s+p-1}}(x_k, t_k) \to \infty\,\, \mbox{as} \,\, k\to \infty.
$$

Again, for simplicity of notation, denote $X =(x,t)$ and $X_k = (x_k, t_k)$.

For each fixed $R$, let
$$
R_k:=2 R M_k^{-1}(X_k).
$$

To start the rescaling procedure and  obtain upper bounds for solutions, we consider the following function
$$
 S_k(X)=\left(M_k(X)(R_k-|X-X_k|)\right)^\frac{2s}{p-1},\,\, X \in B_{R_k}(X_k).
$$
Then there exists a point $A_k \in B_{R_k}(X_k)$  such that
$$
S_k(A_k)=\mathop{\max}\limits_{B_{R_k}(X_k)} S_k(X).
$$
It follows that
\begin{eqnarray}\label{eq4-17}
M_k(X)\leq M_k(A_k)\frac{R_k-|A_k-X_k|}{R_k-|X-X_k|},\,\, X\in B_{R_k}(X_k).
\end{eqnarray}

Denote
$$
\lambda_k=(M_k(A_k))^{-1}.
$$
Taking $X=X_k$ in \eqref{eq4-17}, it yields
 \begin{eqnarray}\label{eq4-18}
\frac{M_k(X_k)}{M_k(A_k)}\leq \frac{R_k-|A_k-X_k|}{R_k},
\end{eqnarray}
which implies that
$$
M_k(X_k)\leq M_k(A_k).
$$
It follows from  the definition of $R_k$ and $\lambda_k$ and  \eqref{eq4-18} that
 \begin{eqnarray}\label{eq4-19}
2R\lambda_k\leq R_k-|A_k-X_k|.
\end{eqnarray}

In addition, using \eqref{eq4-19} and by a similar argument as in deriving \eqref{eq4-6}, one derives
\begin{eqnarray}\label{eq4-20}
R_k-|A_k-X_k|\leq 2(R_k-|X-X_k|),\,\, X\in B_{R \lambda_k}(A_k)\subset B_{R_k}(X_k).
\end{eqnarray}
Combining \eqref{eq4-17} with \eqref{eq4-20} yields
\begin{eqnarray}\label{eq4-21}
	M_k(X)\leq 2 M_k(A_k) ,  \,\, X\in B_{R\lambda_k}(A_k).
\end{eqnarray}

Again Let $\bar{R} = R/\sqrt{n+1}$ and denote $A_k = (\bar{x}_k, \bar{t}_k)$,  then it is obvious that the parabolic cylinder
$$ Q_{\bar{R} \lambda_k}(\bar{x}_k, \bar{t}_k)  = \{(x,t) \mid |x-\bar{x}_k| < \bar{R} \lambda_k, \;\; |t-\bar{t}|< \bar{R}^2 \lambda_k^2 \}$$ is contained in
$B_{R\lambda_k}(A_k)$ for sufficiently large $k$.

 We now re-scale the solutions as
 $$
   v_k(x,t)=\frac{1}{M^{\frac{2s}{p-1}}_k(\bar{x}_k, \bar{t}_k)}u_k(\lambda_kx+\bar{x}_k, \lambda_k^2 t + \bar{t}_k), \,\, (x,t) \in Q_{\bar{R}}(0,0).
 $$

Then $v_k(x,t)$ satisfies the following equation
\begin{eqnarray}\label{eq4.2}
\begin{aligned}
&(\partial_t-\Delta)^s v_k(x,t)\\
=&\lambda_k^{\frac{2sp-(2s+p-1)q}{p-1}} b(\lambda_k x+\bar{x}_k)|\nabla_x v_k|^q(x,t)
+\lambda_k^{\frac{2sp}{p-1}} f(\lambda_k x+\bar{x}_k, \lambda_k^{-\frac{2s}{p-1}} v_k(x,t))\\
:=&F_k(x, v_k(x,t), \nabla_x v_k(x,t)),\,\,\,\,  (x,t)\in Q_{\bar{R}}(0,0).
\end{aligned}
\end{eqnarray}

Moreover, by the definition of $\lambda_k$ and \eqref{eq4-21}, one has
\begin{eqnarray}\label{eq4.3}
\left(v_k^{\frac{p-1}{2s}}+|\nabla_x v_k|^{\frac{p-1}{2s+p-1}}\right)(0,0)=\lambda_k M_k(A_k)=1
\end{eqnarray}
and
\begin{eqnarray}\label{eq4.4}
\left(v_k^{\frac{p-1}{2s}}+|\nabla_x v_k|^{\frac{p-1}{2s+p-1}}\right)(x,t)\leq 2,\,\,\,  (x,t)\in Q_{\bar{R}}(0,0).
\end{eqnarray}
It follows from \eqref{eq4.4} that $v_k$ and $|\nabla_x v_k|$ are uniformly bounded in $Q_{\bar{R}}(0,0)$.
Then  for $k$ large enough,  one can deduce
$$
\lambda_k^{\frac{2sp-(2s+p-1)q}{p-1}} b(\lambda_k x+\bar{x}_k)|\nabla_x v_k|^q(x,t)\leq C_8,\,\, \,  (x,t)\in Q_{\bar{R}}(0,0).
$$
and
\begin{eqnarray*}
\begin{aligned}
&\lambda_k^{\frac{2sp}{p-1}} f(\lambda_k x+\bar{x}_k, \lambda_k^{-\frac{2s}{p-1}} v_k(x,t))\\
\leq &C_0 \lambda_k^{\frac{2sp}{p-1}}\left(1+\lambda_k^{-\frac{2sp}{p-1}}(v_k(x,t))^p\right)\\
\leq &C_0 \lambda_k^{\frac{2sp}{p-1}}+C_0(v_k(x,t))^p\\
\leq & C_9,
\end{aligned}
\end{eqnarray*}
where we have used the assumptions (f1), (f2),  and $0<q<\frac{2sp}{2s+p-1}.$
As a result, there exists a constant $C_{10}>0$ independent of $k$ such that
$$
0\leq F_k(x, v_k(x,t), \nabla_x v_k(x,t))\leq C_{10},\,\, \, (x,t)\in Q_{\bar{R}}(0,0).
$$

 It follows
from Theorem \ref{mthmu1} that there exists a positive constant $C_{11}$ independent of $k$ such that
$$
\|v_k\|_{C_{x,t}^{2s-\varepsilon, s}(Q_{\bar{R}/2}(0,0))} \leq C_{11}
$$
for small $\varepsilon>0.$
Therefore  $F_k(x, v_k(x,t), \nabla_x v_k(x,t))$ in \eqref{eq4.2} is H\"{o}lder continuous for $(x,t) \in Q_{\bar{R}/2}(0,0)$  due to $s>\frac{1}{2}.$  By virtue of Theorem \ref{mthmu2}, there exist some  positive constant $\varepsilon$ and $C_{12}$ which are independent of $k$ such that
\begin{eqnarray}\label{eq4.5}
\|v_k\|_{C_{x,t}^{2s+ \varepsilon, s+\varepsilon}(Q_{\bar{R}/4}(0,0))}\leq C_{12}.
\end{eqnarray}

The above uniform regularity estimates holds in $Q_{\bar{R}/4}(0,0)$ for any $\bar{R}$. Let $\bar{R} \to \infty$, we conclude there exists
 a converging subsequence of $\{v_k(x,t)\}$ (stilled denoted by $\{v_k(x,t)\}$) that converges point-wise in
 $\mathbb{R}^n \times \R$ to a function $v(x,t).$

Moreover, $\{v_k(x,t)\}$ converges locally in $C_{x,t}^{2s+\varepsilon, s + \varepsilon}(\R^n \times \R)$ and hence  $(\partial_t -\Delta)^s v_k$ converges point-wise in $\mathbb{R}^n \times \R$.

Applying a result in \cite{CGL} we obtain
\begin{eqnarray}\label{eq4-13a}
\lim_{k \to \infty} (\partial_t -\Delta)^s v_k(x,t)= (\partial_t-\Delta)^s v(x,t)-b, \;\; \forall \, (x,t) \in \mathbb{R}^n \times \R
\end{eqnarray}
with some nonnegative constant $b$.

Taking limit in \eqref{eq4.3}, we obtain
\begin{eqnarray}\label{eq4.6}
v^{\frac{p-1}{2s}}(0,0)+|\nabla_x v|^{\frac{p-1}{2s+p-1}}(0,0)= 1.
\end{eqnarray}

To derive a contradiction, one needs to derive the limit equation satisfied by $v$.

Since $0<q<\frac{2sp}{2s+p-1}$, $v_k$ and $|\nabla_x v_k|$ are uniformly bounded in $Q_{\bar{R}/2} (0,0),$ hence
$$
\lambda_k^{\frac{2sp-(2s+p-1)q}{p-1}} b(\lambda_k x+a_k)|\nabla_x v_k|^q(x,t)
\to 0\,\, \mbox{ as }\,\, k\to \infty,
$$

While similar to the argument as in the previous subsection, one can derive that
\begin{equation}\label{eq4.8}
\lambda_k^{\frac{2sp}{p-1}} f(\lambda_k x+a_k, \lambda_k^{-\frac{2s}{p-1}} v_k(x,t))
\to K(\bar x) v^p(x,t)\,\, \mbox{ as }\,\, k\to \infty.
\end{equation}

Consequently, combining \eqref{eq4-13a} and \eqref{eq4.8}, we deduce that $v$ is a solution of
\begin{eqnarray}\label{eq4.9}
(\partial_t-\Delta)^s v(x,t)=K(\bar x) v^p(x,t) + b,\,\, (x,t) \in \mathbb{R}^n \times \R.
\end{eqnarray}
for some nonnegative constant $b$. By a similar argument as in the previous section, one drives that
$b$ must be zero.

By virtue of \eqref{eq4.6} and  the maximum principle,
one can deduce that
$$v(x,t)>0,\,\, \forall (x,t) \in \mathbb{R}^n \times \R.$$

Now we arrive at a bounded positive solution of
$$(\partial_t-\Delta)^s v(x,t)=K(\bar x) v^p(x,t).\,\, (x,t) \in \mathbb{R}^n \times \R.$$
This again contradicts the non-existence results in \cite{FP} (Theorem 1.1) if $1< p < \frac{n+2}{n+2-2s}$ and hence completes
the proof of Theorem \ref{thm4.2}.
\end{proof}

{\bf{Declaration:}} Chen is
 partially supported by MPS Simons Foundation 847690.

 Guo is partially supported by the National Natural Science Foundation of China (Grant No.12501145), the Natural Science Foundation of Shanghai (No.25ZR1402207),   the China Postdoctoral Science Foundation (No.2025T180838 and 2025M773061), the Postdoctoral Fellowship Program of CPSF (No. GZC20252004), and the Institute of Modern Analysis-A Frontier Research Center of Shanghai.

 Li is  partially supported by the National Natural Science Foundation of China (Grant No. W2531006, 12250710674, 12031012 and 11831003) and the Institute of Modern Analysis-A Frontier Research Center of Shanghai.
 \medskip

{\bf{Date availability statement:}} No data was used for the research described in the article.
\medskip

{\bf{Conflict of interest statement:}} There is no conflict of interest.

\bigskip

Wenxiong Chen

Department of Mathematical Sciences

Yeshiva University

New York, NY, 10033, USA

wchen@yu.edu
\medskip

Yahong Guo

School of Mathematical Sciences

Shanghai Jiao Tong University

Shanghai, 200240, P.R. China

yhguo@sjtu.edu.cn
\medskip

Congming Li

School of Mathematical Sciences

Shanghai Jiao Tong University

Shanghai, 200240, P.R. China

congming.li@sjtu.edu.cn

\end{document}